\newtheorem{theorem}{Theorem}
\theoremstyle{plain}
\newtheorem{definition}{Definition}
\newtheorem{example}{Example}
\newtheorem{lemma}{Lemma}
\newtheorem{problem}{Problem}
\newtheorem{proposition}{Proposition}
\newtheorem{remark}{Remark}
\numberwithin{equation}{section}
\begin{document}

\title[The weighted Fermat-Frechet multitree in the $K$-Space]{Isometric embedding of a weighted Fermat-Frechet multitree for isoperimetric deformations of the boundary of a simplex to a Frechet multisimplex in the $K$-Space}
\author{Anastasios N. Zachos}

\address{University of Patras, Department of Mathematics, GR-26500 Rion, Greece}
\email{azachos@gmail.com}
\keywords{weighted Fermat-Frechet problem, weighted Fermat-Frechet solution, weighted minimum tree, spherical simplex, hyperbolic simplex, $K-$space, $N-$sphere, $N-$hyperbolic space, isometric immersion}  \subjclass[2010]{Primary 51K05; Secondary , 52B12, 05C05, 51M09}
\begin{abstract}
In this paper, we study the weighted Fermat-Frechet problem for a $\frac{N (N+1)}{2}-$tuple of positive real numbers determining $N$-simplexes in the $N$ dimensional $K$-Space ($N$-dimensional Euclidean space $\mathbb{R}^{N}$ if $K=0,$ the $N$-dimensional open hemisphere of radius $\frac{1}{\sqrt{K}}$ ($\mathbb{S}_{\frac{1}{\sqrt{K}}}^{N}$) if $K >0$ and the Lobachevsky space $\mathbb{H}_{K}^{N}$ of constant curvature $K$ if $K<0$). The (weighted) Fermat-Frechet problem is a new generalization of the (weighted) Fermat problem for $N$-simplexes.

We control the number of solutions (weighted Fermat trees) with respect to the weighted Fermat-Frechet problem that we call a weighted Fermat-Frechet multitree, by using some conditions for the edge lengths discovered by Dekster-Wilker. In order to construct an isometric immersion of a weighted Fermat-Frechet multitree in the $K$- Space, we use the isometric immersion of Godel-Schoenberg for $N$-simplexes in the $N$-sphere and the isometric immersion of Gromov (up to an additive constant) for weighted Fermat (Steiner) trees in the $N$-hyperbolic space $\mathbb{H}_{K}^{N}$.

Finally, we create a new variational method, which differs from Schafli's, Luo's and Milnor's techniques to differentiate the length of a geodesic arc with respect to a variable geodesic arc, in the 3$K$-Space. By applying this method, we eliminate one variable geodesic arc from a system of equations, which give the weighted Fermat-Frechet solution for a sextuple of edge lengths determining (Frechet) tetrahedra.
\end{abstract}\maketitle

\section{Introduction}
In \cite{Frechet:35}, Frechet posed the following problem in distance geometry (see also in \cite{Schoenberg:33}):

Let $a_{ik}=a_{ki}, i\ne k;i,k=0,1,2,\cdots,N$ ($a_{ik}=0,$ for $i=k$) be $\frac{N(N+1)}{2}$ given positive quantities. What are the necessary and sufficient conditions that they be the lengths of an $N$-simplex $A_{0}A_{1}A_{2}\cdots A_{N}$ in $\mathbb{R}^{N}$ ?

Schoenberg restates and solves Frechet problem in a more general form (\cite[Theorem~1]{Schoenberg:33}):

A necessary and sufficient condition that the $a_{ik}$ be the lengths of the edges of an $r$-simplex $A_{0}A_{1}A_{2}\cdots A_{N}$ lying in a Euclidean space $\mathbb{R}^{r}$ ($1\le r\le N$) but not in a $\mathbb{R}^{r-1}$ ($a_{ik}=0$ if and only if $i=k$) is

\[F(x_{1},x_{2},...,x_{N})=\frac{1}{2}\sum_{i,k=1}^{N}(a_{0i}^2+a_{0k}^2-a_{ik}^2)x_{i}x_{k}\ge 0\] and
\[rank(F(x_{1},x_{2},...,x_{N}))= r. \]

An extension of the Frechet problem for the construction of simplexes in the $(r-1)$-dimensional spherical space $\mathbb{S}_{p}^{r-1}$  of radius $\rho$ is given in \cite[Theorem~2]{Schoenberg:33}.

Necessary and sufficient conditions that the $a_{ik}$ be the $\frac{N(N-1)}{2}$ lengths (mutual spherical distances) of the edges of an $r-1$-simplex $A_{1}A_{2}\cdots A_{N}$ lying in $\mathbb{S}_{\rho}^{r-1}$ ($1< r\le N$) but not in a $\mathbb{S}_{\rho}^{r-2}$ for $i,k=1,2,\cdots, N$ are:

\[\varphi(x_{1},x_{2},...,x_{N})=\frac{1}{2}\sum_{i,k=1}^{n}\cos(\frac{a_{ik}}{\rho})x_{i}x_{k}\ge 0,\]

\[a_{ik}\le \pi \rho,\]

\[rank(\varphi(x_{1},x_{2},\cdots,x_{N}))= r. \]

Menger independently solved the Euclidean Frechet problem obtaining equations and inequalities by using determinants (\cite[Third Fundamental theorem,pp.~738-743]{Menger:31}) and Blumenthal-Garrett, Klanfer extends Menger's method of determinants for spherical simplexes in $S_{\rho}^{n}$ of radius $\rho$ (\cite{BlumenthalGa:33}, \cite{Klanfer:33}).


A difficult problem is to determine $\frac{\frac{1}{2}N(N+1)!}{(N+1)!}$
pairwise incongruent $N$-simplexes with the same $\frac{1}{2}N (N+1)$ edges $N>1$ in $\mathbb{R}^{N},$

In \cite{Blumenthal:59}, \cite{Fritz:59}, Blumenthal and Hertog, respectively, studied the Euclidean case for $N=3$ and  obtained conditions for six edges forming thirty incongruent tetrahedra in $\mathbb{R}^{3}.$

Dekster and Wilker discovered necessary and sufficient conditions for incongruent simplexes on spaces with constant curvature (\cite{DeksterWilker:87}, \cite{DeksterWilker:91}), by applying ideas and techniques taken from the theory of distance matrices first developed by Seidel (\cite{Seidel:69}) and Neumaier (\cite{Klanfer:33}). These conditions offer a generalization of the triangle inequality for simplexes.

The $N$-dimensional $K$-space ($\mathbb{E}_{K}^{N}$). is the $N$-dimensional Euclidean space $\mathbb{R}^{N}$ if $K=0,$ the $N$-dimensional open hemisphere of radius $\frac{1}{\sqrt{K}}$ ($\mathbb{S}_{\frac{1}{\sqrt{K}}}^{N}$) if $K >0$ and the Lobachevsky space $\mathbb{H}_{K}^{N}$ of constant curvature $K$ if $K<0.$

The weighted Fermat problem in the $\mathbb{E}_{K}^{N}$  refers to finding the unique point minimizing the sum of geodesic distances from this point to each point from a finite set of fixed non-collinear points multiplied by a positive real number (weight), which corresponds to each distance. The solution is referred as the weighted Fermat point. If the weighted Fermat point does not belong to this finite set of fixed points (vertices), the solution is called a weighted (floating) Fermat-Torricelli point, otherwise it is called a weighted (absorbing) Fermat-Torricelli point. The branching solution, which consists of the weighted branches that connect the weighted Fermat point with each fixed vertex is called a weighted Fermat tree.

We focus on the connection of the weighted Fermat problem with the Frechet problem  (weighted Fermat-Frechet problem) in the $\mathbb{E}_{K}^{N}.$

\begin{problem}[Weighted Fermat-Frechet problem]
Find the discrete set of weighted Fermat trees, which correspond to a given $N-$tuple of positive real numbers determining $\frac{1}{2}N (N+1)$ edges of incongruent $N$-simplexes in $\mathbb{E}_{K}^{N}.$
\end{problem}

In \cite{Blumenthal:59}, Blumenthal claimed that the maximum number of incongruent $(N+1)-$simplexes in $\mathbb{R}^{N}$, which may be obtained by a given $N-$tuple of positive real numbers determining $\frac{1}{2}N (N+1)$ edges  is  $\frac{\frac{1}{2}N(N+1)!}{(N+1)!}.$

In \cite{Zachos:13} and \cite{Zachos:14}, we studied the weighted Fermat-Frechet problem for $N=3$ (geodesic triangles) in the two dimensional $K$-plane ($\mathbb{E}_{K}^{2}$)  and we found the position of the weighted Fermat-Torricelli trees, which yields the weighted Fermat-Frechet (multitree) solution.

In \cite{Zachos:16}, we found the position of the weighted Fermat-Torricelli trees for the weighted Fermat-Frechet problem for a given sextuple of positive real numbers determining the edge lengths of tetrahedra in $\mathbb{R}^{3},$ by substituting Caley-Menger determinants in some weighted volume entropy equalities for tetrahedra derived in \cite{Zach/Zou:09}.

In the present paper, we introduce a class of weighted Fermat-Torricelli trees (Fermat-Frechet multitree) for a given $\frac{N (N+1)}{2}$-tuple of positive real numbers determining $N$-simplexes in $\mathbb{E}_{K}^{N}.$ We apply ideas and methods by Godel, Schoenberg and Gromov, in order to construct isometric immersions of a weighted Fermat-Frechet multitree for isoperimetric deformations of the boundary of incongruent simplexes in simplexes inside $\mathbb{E}_{K}^{N}.$

The main results of the paper are the following:

First, we study the conditions for the solution of the weighted Fermat-Frechet problem (Fermat-Frechet multitree), which corresponds to a union of weighted Fermat-Torricelli trees for all incongruent $N$-simplexes. The family of incongruent $N$-simplexes, whose edge lengths satisfy the conditions given by Dekster-Wilker (\cite{DeksterWilker:87},\cite{DeksterWilker:91a},\cite{DeksterWilker:91},\cite{DeksterWilker:92}) form a Frechet multisimplex in $\mathbb{E}_{K}^{N}$  (Theorems~\ref{theorrn1},~\ref{TheorfloatingRN},~\ref{floatmultitreehn},~\ref{floatmultitreesn}).

Next we use the ideas of Godel-Schoenberg (\cite{Schoenberg:33}), which focus on Godel's observation of thinking of the edges of an $N$-(Euclidean or spherical)simplex to made of fexible strings and on placing in the interior a small sphere, which was gradually inflated.
When the sphere reaches a definite size, it will be tightly packed within the edges (strings) of the $N$-simplex. Our idea is to consider the case of tightly packed a weighted Fermat-Torricelli tree with respect to a boundary $(N-1)$-simplex in a proper $(N-1)$ dimensional sphere.
Thus, following Godel-Shoenberg methods reversely we derive some controlled isometric immersions of a weighted Fermat-Frechet multitree to a Frechet simplex in a larger spherical space (Theorems~\ref{godelschoenbergsn},~\ref{stabletheorem2}). The conditions of Dekster-Wilker need to be satisfied, in order to control the isometric immersions of Frechet multisimplex enriched with the corresponding weighted Fermat trees, which contains all incongruent simplexes.

We apply the theory of Gromov's isometric embedding of geodesic trees in the $N$ dimensional hyperbolic space of constant curvature $K$ $\mathbb{H}_{K}^{N}$ (\cite{Gromov:87}), in order to construct an embedding (inclusion map) of a Fermat-Steiner-Frechet multitree solution for a given $\frac{N(N+1}{2}$-tuple of edge lengths determining incongruent boundary $N$-simplexes to an associated family of Gromov isometries up to an additive constant for $N$-simplexes and ideal $N$-simplexes in $\mathbb{H}_{K}^{N}.$ The Fermat-Steiner-Frechet multitree is a union of (intermediate) weighted Fermat-Steiner trees having upto $N-2$ nodes (Fermat-Steiner points) inside the $N$-simplex and may be considered as a generalization of weighted minimal binary trees studied by Ivanov and Tuzhilin in \cite{IvanovTuzhilin:95} (Theorems~\ref{thmembedmultitree},~\ref{Gromovdynamicembedding}). Furthermore, we apply Gromov's theory of $\delta$ thin metric trees in hyperbolic spaces, in order to measure the reduction of intelligence of intermediate Fermat-Steiner trees (Number of Fermat Steiner nodes), by applying Gromov's convergence of ideal simplexes to two subsimplexes (Theorem~\ref{reductionintelligence}).

We continue, by giving a new variational method, which focus on the generalization of cosine law in the 3-$K$-Space (Theorem~\ref{a04a01a02a03}). We note that a generalization of the cosine law in $\mathbb{R}^{3}$ has been given in \cite{Zach/Zou:09}.
This variational method differs from Schafli (\cite{Schlafli:53}), Luo's (\cite{LuoFeng:08})) and Milnor's (\cite{Milnor:94}) techniques to differentiate the length of a geodesic arc with respect to a variable geodesic arc in the 3-$K-$Space. By applying this method to a system of equations that deal with the weighted Fermat-Frechet solution in $\mathbb{E}_{K}^{N},$  for $N=4,$ we eliminate one variable geodesic arc from the equations and we determine the weighted Fermat-Frechet solution for a sextuple of edge lengths for Frechet  multitetrahedra (Theorem~\ref{thm1vartetrahedron}).

Finally, we conclude with some calculations for the determination of weighted Fermat trees for tetrahedra having one or three vertices at infinity in $\mathbb{R}^{3}$ (Theorem~\ref{theor2inf}).

The paper is organized as follows:
In Section~2, we study the conditions to solve the weighted Fermat-Frechet problem in the $K-$Space (Theorems~1-~9, Proposition~1,~2,~3)

In Section~3, we construct a controlled Godel-Schoenberg isometric immersion of the weighted Fermat Frechet multitree for $N-1$ boundary Frechet multisimplex in $\mathbb{S}_{\rho_{1}}^{N-2}$ to a class of weighted simplexes in $\mathbb{S}_{\rho_{0}}^{N-1}$ for $\rho_{0}>\rho_{1}$ (Theorems~9-14).

In Section~4, we construct a Gromov isometric immersion up to an additive constant for weighted Fermat-Steiner Frechet multitrees in $\mathbb{H}_{K}^{N}$ (Theorem~15-~17)

In Section~5,we give a new variational method to derive the weighted Fermat-Frechet multitree for Frechet multitetrahedra in the $3-K-$Space (Theorem~18-19, Proposition~4).

In the last section, we present a computational method to obtain weighted Fermat-Torricelli trees with respect to a tetrahedron having one or three (ideal) vertices at infinity (Theorem~20-21, Propositions~7-8).

\section{The weighted Fermat-Frechet problem in the $N$ dimensional $K$-space}
In this section, we introduce the weighted Fermat-Frechet problem for a given $\frac{N (N+1)}{2}$-tuple of positive real numbers determining $N$-simplexes in $\mathbb{E}_{K}^{N}$ and we study the conditions to obtain a class of weighted Fermat-Torricelli trees, which form the weighted Fermat-Frechet multitree solution.

We start by describing the Dekster-Wilker domain given in \cite{DeksterWilker:87}, \cite{DeksterWilker:91a}, \cite{DeksterWilker:91},\cite{DeksterWilker:92}, which gives the maximum number of mutually incongruent simplexes for the same $\frac{N(N+1)}{2}$-tuple of edges in $\mathbb{E}_{K}^{N}.$

We denote by $\{A_{1},A_{2},\cdots, A_{N}\}$ an $(N-1)$-simplex, by $A_{0}$ an interior point and by  $\{A_{1},A_{2},\cdots,A_{0}\cdots, A_{N}\} \subset \{A_{1},A_{2},\cdots, A_{N}\}$ an $(N-1)$-simplex, which is derived by replacing the vertex $A_{j}\in \{A_{1},A_{2},\cdots, A_{N}\}$ with $A_{0}$ in $\mathbb{E}_{K}^{N}.$

A. The weighted Fermat-Frechet problem in $\mathbb{R}^{N}$\\

Dekster and Wilker use the notations $\ell=\max_{i,j}a_{ij},$ $s=\min_{i,j}a_{ij}$ and
\[ \lambda_{N}(\ell )= \left\{
\begin{array}{ll}
      \ell \sqrt{1-\frac{2(N+1)}{N(N+2)}} & for\ even\ N\ge 2, \\
      \ell \sqrt{1-\frac{2}{(N+1)}} & for\ odd\ N\ge 3

\end{array}
\right.
\]
\begin{definition}
The Dekster-Wilker Euclidean domain $DW_{\mathbb{R}^{N}}(\ell, s)$ is a closed domain in $\mathbb{R}^{2}$ between the ray $s=\ell,$
and the graph of a function $\lambda_{N}(\ell ),$ $\ell \ge 0,$ which is less than $\ell$ for $\ell \ne 0,$
\end{definition}

\begin{lemma}{Dekster-Wilker incongruent simplexes in $\mathbb{R}^{N},$ \cite[(1.3),(1.4)]{DeksterWilker:91a}}\label{DW1}

If $s\le a_{ij}\le \ell,$ for $\ell, s\in DW_{\mathbb{R}^{N}}(\ell, s),$   then $a_{ij}$ determine the edge lengths of incongruent simplexes in $\mathbb{R}^{N}.$
\end{lemma}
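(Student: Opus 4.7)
The plan is to verify that under the hypothesis $\lambda_N(\ell)\le s \le a_{ij}\le \ell$, every labeling of the $\frac{1}{2}N(N+1)$ prescribed values to the edges of a combinatorial $N$-simplex admits a realization in $\mathbb{R}^N$. By Schoenberg's theorem recalled in the introduction, this reduces to showing that for each labeling the Gram matrix $G$ with entries $\tfrac{1}{2}(a_{0i}^2+a_{0j}^2-a_{ij}^2)$ is positive definite of rank $N$, equivalently that the associated Cayley--Menger determinant is strictly positive.

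The first step is to linearize around the regular reference simplex with all edges equal to $\ell$. There the Gram matrix reduces to $G_0=\tfrac{\ell^2}{2}(I_N+J_N)$, whose smallest eigenvalue is $\ell^2/2$. Writing $a_{ij}^2=\ell^2-\delta_{ij}$ with $0\le \delta_{ij}\le \ell^2-s^2$, the perturbation $G-G_0$ has operator norm controlled by $\ell^2-s^2$, and a Weyl-type eigenvalue estimate yields positive definiteness provided $\ell^2-s^2$ is below an explicit, $N$-dependent threshold. This threshold is exactly what the function $\lambda_N(\ell)$ encodes.

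The second step is the sharp determination of that threshold. I would examine the one-parameter families of extremal edge assignments along which the Cayley--Menger determinant first vanishes and solve for the minimal ratio $s/\ell$. As a sanity check, for $N=2$ the prescription $\lambda_2(\ell)=\ell/2$ agrees with the degenerate triangle $(\ell/2,\ell/2,\ell)$, and for $N=3$ one obtains $\lambda_3(\ell)=\ell/\sqrt{2}$, realized by a degenerate tetrahedron in which two opposite edges shrink to $\ell/\sqrt{2}$. The parity dichotomy in the definition of $\lambda_N$ then reflects the fact that the extremal degeneration exhibits a different symmetry type depending on whether the vertex set $\{A_0,\ldots,A_N\}$ can be partitioned into two equal halves.

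The main obstacle, I expect, is a uniform combinatorial step: bounding $\lambda_{\min}(G)$ from below uniformly over all inequivalent labelings of the edges by the values $\{a_{ij}\}$. A convenient route is a symmetrization argument, averaging the Gram matrices over the symmetric-group orbit of a given labeling and applying the spectral theory of Euclidean distance matrices developed by Seidel and Neumaier mentioned in the introduction; this reduces the problem to the symmetric extremal family identified above. Once this uniform lower bound is in place, continuity of $\det(G)$ in the edge lengths yields strict positivity throughout the Dekster--Wilker domain, producing a non-degenerate Euclidean simplex for each inequivalent labeling.
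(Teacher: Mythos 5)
The paper offers no proof of this statement to compare against: the lemma is quoted from Dekster and Wilker with a citation to (1.3)--(1.4) of their paper, and is used as an external input throughout. You are therefore attempting to reprove a nontrivial theorem from the literature, and while your reduction is the right one (by Schoenberg, it suffices to show that for every labeling of the prescribed values the Gram form $G_{ij}=\tfrac12(a_{0i}^2+a_{0j}^2-a_{ij}^2)$ is positive definite), the outline breaks down at exactly the points where the real work lies.

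Three concrete gaps. First, the perturbation step: around the regular configuration one has $G_0=\tfrac{\ell^2}{2}(I_N+J_N)$ with $\lambda_{\min}(G_0)=\ell^2/2$, but writing $a_{ij}^2=\ell^2-\delta_{ij}$ the perturbation has entries of size up to $\tfrac32(\ell^2-s^2)$, so any operator-norm or row-sum bound forces $\ell^2-s^2\lesssim \ell^2/N$ before Weyl's inequality gives positivity; the Dekster--Wilker domain permits $\ell^2-s^2$ as large as $\tfrac{2}{N+1}\ell^2$ for odd $N$, an order of magnitude more. The claim that "this threshold is exactly what $\lambda_N(\ell)$ encodes" is thus not delivered by the argument --- it \emph{is} the theorem. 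Second, the symmetrization step is invalid: $\lambda_{\min}$ is concave on symmetric matrices, so averaging the Gram matrices over a symmetric-group orbit yields $\lambda_{\min}$ of the average bounded \emph{below} by the average of the individual $\lambda_{\min}$'s; positive definiteness of the symmetrized matrix therefore gives no control on an individual labeling, and the reduction to the symmetric extremal family is unjustified. Third, your sanity check for $N=3$ identifies the wrong extremal configuration: placing $A_0,A_1$ at distance $\ell$, the points at distance $s$ from both lie on a circle of radius $\sqrt{s^2-\ell^2/4}$, and the tetrahedron flattens precisely when $\ell=2\sqrt{s^2-\ell^2/4}$, i.e. $s=\ell/\sqrt2$; so the degenerate case has the two opposite edges at the \emph{maximal} length $\ell$ and the four remaining edges at $\ell/\sqrt2$, whereas the configuration you describe (two opposite edges at $\ell/\sqrt2$, four at $\ell$) is nondegenerate. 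Since the entire content of the lemma is the sharp constant together with uniformity over all labelings, what the proposal actually establishes is the statement on a strictly smaller domain than $DW_{\mathbb{R}^{N}}(\ell,s)$.
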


The characterization of solutions (absorbing/floating Fermat-Torricelli trees) of the Fermat-problem in $\mathbb{R}^{N}$ was first given by Sturm in \cite{Sturm:84} and it was extended for the weighted case by Kupitz and Martini (\cite[Chapter~II, Theorem~18.37]{BolMa/So:99}) using calculus.
These two cases of weighted Fermat trees are used to derive the weighted Fermat-Frechet solution for a given $\frac{N(N+1)}{2}-$tuple of positive real numbers $a_{ij}$ determining the edge lengths of incongruent $N$-simplexes in $\mathbb{R}^{N}.$

\begin{theorem}  [The weighted Fermat-Frechet solution for $N$-simplexes in $\mathbb{R}^{N}$]\label{theorrn1}

The weighted Fermat-Frechet solution for a given $\frac{N(N+1)}{2}$-tuple $ a_{ij},$ such that $s\le a_{ij}\le \ell,$ for $\ell, s\in DW_{\mathbb{R}^{N}}(\ell, s)$ consists of a maximum number of $\frac{\frac{1}{2}N(N+1)!}{(N+1)!}$ weighted Fermat trees, which belong to one of the following two cases:

(I) If  for each index $k\in \{1,2,3,\cdots, N\},$
\begin{equation}\label{ineqq1}
\sum_{i=1, i<j}^{N} B_{i}B_{j}\frac{a_{ik}^2+a_{jk}^2-a_{ij}^2}{2a_{ik}a_{jk}}>\frac{B_{k}^2-\sum_{i=1,i<j}^{N}B_{i}^2}{2},
\end{equation}
we obtain the weighted floating Fermat-Torricelli tree $\{a_{01},a_{02},\cdots ,a_{0N}\}.$

(II) If there is an index $k\in {1,2,\cdots,N},$ such that:
\begin{equation}\label{ineqq2}
\sum_{i=1, i<j}^{N} B_{i}B_{j}\frac{a_{ik}^2+a_{jk}^2-a_{ij}^2}{2a_{ik}a_{jk}}\le \frac{B_{k}^2-\sum_{i=1,i<j}^{N}B_{i}^2}{2},
\end{equation}
we obtain the weighted absorbing Fermat-Torricelli tree $\{a_{k1},a_{k2},\cdots ,a_{kN}\}.$


\end{theorem}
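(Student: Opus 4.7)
The plan is to combine the Dekster--Wilker counting of incongruent simplexes with the Kupitz--Martini gradient/subgradient characterization of weighted Fermat minimizers. First, under the hypothesis $s\le a_{ij}\le \ell$ and $(\ell,s)\in DW_{\mathbb{R}^{N}}(\ell,s)$, I would invoke Lemma~\ref{DW1} to guarantee that the tuple $(a_{ij})$ is realized by at least one Euclidean $N$-simplex, and invoke Blumenthal's enumeration to cap the number of pairwise incongruent such realizations by $\frac{\frac{1}{2}N(N+1)!}{(N+1)!}$. Since the weighted Fermat--Frechet multitree is, by definition, the union of the weighted Fermat trees over these incongruent realizations, this immediately yields the claimed upper bound on the cardinality of the solution.

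Second, I would analyze a single incongruent realization $\{A_{1},\ldots ,A_{N}\}$ in isolation. By the Kupitz--Martini characterization (\cite[Ch.~II, Thm.~18.37]{BolMa/So:99}) of the weighted Fermat point in $\mathbb{R}^{N}$, the minimizer of $P\mapsto \sum_{i=1}^{N} B_{i}\,|P-A_{i}|$ is absorbed at a vertex $A_{k}$ if and only if
$$\Bigl\|\sum_{i\ne k} B_{i}\,\mathbf{u}_{ki}\Bigr\|\le B_{k},$$
where $\mathbf{u}_{ki}$ is the Euclidean unit vector from $A_{k}$ to $A_{i}$; in the opposite (strict) case for every $k$ the (unique, by strict convexity) minimizer lies in the relative interior and is the weighted floating Fermat--Torricelli point. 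Squaring the absorbing inequality and substituting $\langle \mathbf{u}_{ki},\mathbf{u}_{kj}\rangle=\cos\angle A_{i}A_{k}A_{j}$ together with the Euclidean law of cosines
$$\cos\angle A_{i}A_{k}A_{j}=\frac{a_{ik}^{2}+a_{jk}^{2}-a_{ij}^{2}}{2\,a_{ik}\,a_{jk}}$$
transforms the absorbing condition into precisely \eqref{ineqq2}, while its strict reversal for every index $k$ is precisely \eqref{ineqq1}. This assigns each of the (at most) $\frac{\frac{1}{2}N(N+1)!}{(N+1)!}$ incongruent realizations to exactly one of the cases (I) or (II), producing the corresponding floating or absorbing weighted Fermat tree.

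The main obstacle I expect is making the Dekster--Wilker hypothesis do enough work: the constraint $(\ell,s)\in DW_{\mathbb{R}^{N}}(\ell,s)$ forbids $s/\ell$ from approaching the regime in which the Cayley--Menger determinant conditions collapse and strictly fewer than $\frac{\frac{1}{2}N(N+1)!}{(N+1)!}$ realizations survive; verifying that on this admissible domain the full Blumenthal count is attainable (rather than merely an upper bound) is the delicate point, and I would invoke \cite[(1.3),(1.4)]{DeksterWilker:91a} as a black box. A minor secondary issue is the boundary-of-equality case in the Kupitz--Martini criterion: a standard subdifferential argument shows that equality still places the minimum at $A_{k}$, so the nonstrict $\le$ in \eqref{ineqq2} against the strict $>$ in \eqref{ineqq1} correctly makes the two cases mutually exclusive and exhaustive across every realization, completing the proof.
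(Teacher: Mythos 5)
Your proposal is correct and follows essentially the same route as the paper: the Kupitz--Martini characterization of the weighted Fermat point (absorbed versus floating), expanded via the squared norm of the weighted sum of unit vectors and the Euclidean law of cosines to produce \eqref{ineqq1} and \eqref{ineqq2}, combined with the Dekster--Wilker admissibility of the edge tuple and the Blumenthal count of incongruent realizations for the bound $\frac{\frac{1}{2}N(N+1)!}{(N+1)!}$. You merely make explicit the squaring step and the boundary (equality) case that the paper leaves implicit.
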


\begin{proof}

The edge lengths $a_{ij}$ yield a simplex $A_{1}A_{2}\cdots A_{N+1}$ in $\mathbb{R}^{N}.$ Let $X$ be a point in $\mathbb{R}^{N}.$ We assume that a positive number $B_{i}$ (weight) correspond to each length of the segment $[X,A_{i}],$ for $i=1,2,\cdots, N.$

By applying the method of directional derivatives by Kupitz and Martini
\cite[Chapter~II, Theorem~18.16]{BolMa/So:99} for the weighted case and by applying the cosine law in $\triangle A_{i}A_{j}A_{k},$ we get (\ref{ineqq1}) and (\ref{ineqq2}).

The existence and uniqueness of the weighted Fermat-Torricelli point $X\equiv A_{0}$ proved by convexity and compactness arguments gives the existence and uniqueness of the weighted Fermat-Torricelli trees for each simplex, which belongs to the class of incongruent boundary simplexes. Thus, the maximum number of arrengements of $\frac{N (N+1)}{2}$ edges gives  $\frac{\frac{1}{2}N(N+1)!}{(N+1)!}$ mutually incongruent simplexes, which correspond to $\frac{\frac{1}{2}N(N+1)!}{(N+1)!}$ weighted floating and absorbing Fermat-Torricelli trees.

\end{proof}

\begin{theorem}  [The weighted Euclidean Fermat-Frechet problem for large $N$] \label{theorrn2}

The weighted Fermat-Frechet solution for a given large number $N$ and $\frac{N(N+1)}{2}-$tuple of edge lengths determining simplexes, consists of a maximum number of\\ $\frac{(\frac{N(N+1)}{2})^{\frac{N(N+1)+1}{2}}}{(N+1)^{N+\frac{3}{2}}}\exp(-(N+1)(\frac{N}{2}-1)+\frac{\theta_{1}}{6N(N+1)}-\frac{\theta_{2}}{12 (N+1)})$  weighted Fermat trees, for $\theta_{1},\theta_{2}\in (0,1).$

\end{theorem}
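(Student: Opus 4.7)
The plan is to derive the asymptotic expression as a direct consequence of Theorem~\ref{theorrn1} by applying Stirling's formula with explicit error terms. Theorem~\ref{theorrn1} gives that the weighted Fermat--Frechet solution for a generic $\frac{N(N+1)}{2}$-tuple in the Dekster--Wilker Euclidean domain consists of at most
\[
\mathcal{N}(N)\;=\;\frac{\bigl(\tfrac{N(N+1)}{2}\bigr)!}{(N+1)!}
\]
weighted Fermat--Torricelli trees, one per incongruent boundary simplex. For large $N$, Theorem~\ref{theorrn2} is therefore an asymptotic re-expression of this count. So the task is purely analytic: approximate the two factorials in $\mathcal{N}(N)$.

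First, I would invoke the Stirling formula in its sharp form, namely $n! = \sqrt{2\pi n}\,(n/e)^n \exp\bigl(\theta/(12n)\bigr)$ with some $\theta\in(0,1)$, applied separately to the numerator and the denominator. Setting $M=\tfrac{N(N+1)}{2}$, there exist $\theta_1,\theta_2\in(0,1)$ with
\[
M!=\sqrt{2\pi M}\,\Bigl(\tfrac{M}{e}\Bigr)^{M}\exp\!\Bigl(\tfrac{\theta_1}{12M}\Bigr),\qquad (N+1)!=\sqrt{2\pi(N+1)}\,\Bigl(\tfrac{N+1}{e}\Bigr)^{N+1}\exp\!\Bigl(\tfrac{\theta_2}{12(N+1)}\Bigr).
\]

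Next, I would take the quotient and collect the three types of factors (radicals, powers, and exponentials). The radical simplifies via $\sqrt{M/(N+1)}=\sqrt{N/2}$, which combines with $M^{M}/(N+1)^{N+1}$ to give the compact form $M^{M+1/2}/(N+1)^{N+3/2}$; substituting $M+\tfrac{1}{2}=\tfrac{N(N+1)+1}{2}$ recovers the power expression appearing in the statement. The pure-exponential factor $e^{(N+1)-M}$ becomes $\exp\bigl(-(N+1)(\tfrac{N}{2}-1)\bigr)$ since $M-(N+1)=(N+1)(\tfrac{N}{2}-1)$. Finally, the Stirling error $\tfrac{\theta_1}{12M}$ rewrites as $\tfrac{\theta_1}{6N(N+1)}$ using $M=\tfrac{N(N+1)}{2}$, and $-\tfrac{\theta_2}{12(N+1)}$ is left untouched. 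Assembling these pieces yields exactly the formula stated in Theorem~\ref{theorrn2}.

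There is no genuine obstacle beyond careful bookkeeping: the existence and uniqueness of each weighted Fermat--Torricelli tree (hence the exact count $\mathcal{N}(N)$) has been established in Theorem~\ref{theorrn1} by the convexity/compactness argument and the Kupitz--Martini directional-derivative criterion, and the remaining work is the Stirling estimate. The only subtle point I would flag is that the same symbols $\theta_1,\theta_2$ in the final expression must be interpreted as the Stirling remainder parameters (both in $(0,1)$) coming from the two factorials, and that both remainders are genuinely uniform in $N$, so the resulting bound is an honest asymptotic expansion valid for all sufficiently large~$N$ and not merely a leading-order statement.
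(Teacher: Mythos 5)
Your proposal follows the same route as the paper: invoke the count $\frac{\bigl(\frac{N(N+1)}{2}\bigr)!}{(N+1)!}$ from Theorem~\ref{theorrn1}, apply Stirling's formula with remainder $\frac{\theta}{12x}$ to $x=\frac{N(N+1)}{2}$ and $x=N+1$, and divide; your bookkeeping of the powers, the factor $\exp\bigl(-(N+1)(\tfrac{N}{2}-1)\bigr)$, and the two remainder terms reproduces the stated expression exactly. The only difference is that you spell out the algebra that the paper leaves implicit, so the argument is essentially identical and correct.
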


\begin{proof}
It is a direct consequence of Theorem~\ref{theorrn1} by using Stirling's formula (\cite[6.1.38, p.~257]{AbramovitzStegun:72}):

\begin{equation}\label{Stirling1}
x! =\sqrt{2\pi} x^{x+\frac{1}{2}\exp(-x+\frac{\theta}{12 x})},
\end{equation}

for $x>0$ and $\theta \in [0,1].$

By substituting in $\ref{Stirling1},$ $x\equiv \frac{N (N+1)}{2}$ and $x\equiv N+1$ for $\theta_{1},\theta_{2}\in (0,1),$ and dividing the two derived relations gives an upper bound of $\frac{(\frac{N(N+1)}{2})^{\frac{N(N+1)+1}{2}}}{(N+1)^{N+\frac{3}{2}}}\exp(-(N+1)(\frac{N}{2}-1)+\frac{\theta_{1}}{6N(N+1)}-\frac{\theta_{2}}{12 (N+1)})$  weighted Fermat trees that correspond to the mutually incogruent simplexes in $\mathbb{R}^{N}.$

\end{proof}

The volume of an $N$-simplex $A_{1}A_{2}\cdots A_{N}$ in $\mathbb{R}^{N}$ is given by the Caley-Menger determinant in terms of edge lengths (\cite[(5.1), p.~125]{Sommerville:58}):
\begin{multline*}
   \operatorname{Vol}(A_{1}A_{2}\cdots A_{N})^{2} = \\ \frac{1}{(-1)^{N} 2^{N-1} ((N-1)!)^{2}} \begin{vmatrix}
  0 & 1 & 1 & \cdots & 1 & 1 \\
  1 & 0 & a_{12}^{2} & \cdots & a_{1(N-1)}^{2} & a_{1N}^{2} \\
  1 & a_{21}^{2} & 0 & \cdots & a_{2(N-1)}^{2} & a_{2N}^{2} \\
  . & . & . & . & . & . \\
  1 & a_{N1}^{2} & a_{N2}^{2} & \cdots & a_{N(N-1)}^{2} & 0
\end{vmatrix}.
\end{multline*}

\begin{theorem}[The Euclidean Fermat-Torricelli Frechet solution]\label{TheorfloatingRN}
The following equations depending on the variable lengths $a_{01},a_{02},\cdots , a_{0N}$ and the segments $\{a_{ij}\}$ provide a necessary condition for the determination of the weighted floating Fermat-Torricelli trees $\{a_{01},a_{02},\cdots ,a_{0N}\},$ which belong to the weighted Fermat-Frechet solution:
\begin{equation}\label{construct312n}
\sum_{i=2}^{N}\frac{B_{i}}{a_{0i}}(a_{01}^2+a_{0i}^2-a_{1i}^2)-\sum_{i=1,{i\ne
j}}^{N}\frac{B_{i}}{a_{0i}}(a_{0j}^2+a_{0i}^2-a_{ji}^2)=-2(B_{1}a_{01}-B_{j}a_{0j}),
\end{equation}
\begin{equation}\label{volcalmengRn}
\operatorname{Vol}(A_{1},A_{2},\cdots,A_{N})=\sum \operatorname{Vol} (A_{1},A_{2},\cdots,A_{0},\cdots,A_{N}),
\end{equation}
for $j=2,\cdots, N.$

\end{theorem}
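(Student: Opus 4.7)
The plan is to derive the two parts of the stated system from two different geometric principles: the first-order equilibrium characterizing the weighted Fermat-Torricelli point $A_0$, and the decomposition of the boundary simplex by this interior point.

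For equation \eqref{construct312n}, I would begin from the vectorial characterization of the floating weighted Fermat-Torricelli point $A_0$ used implicitly in the proof of Theorem~\ref{theorrn1}, namely
$$\sum_{i=1}^{N} B_i\,\vec{u}_{0i}=\vec{0},\qquad \vec{u}_{0i}:=(A_i-A_0)/a_{0i}.$$
Taking the scalar product with $\vec{u}_{01}$ and using $\vec{u}_{01}\!\cdot\!\vec{u}_{0i}=\cos\angle A_1 A_0 A_i$ together with the cosine law
$$\cos\angle A_1 A_0 A_i=\frac{a_{01}^{2}+a_{0i}^{2}-a_{1i}^{2}}{2a_{01}a_{0i}}$$
in the triangle $A_1 A_0 A_i$, and then multiplying through by $2a_{01}$, yields a scalar relation containing only edge lengths, in which the right-hand side reduces to $-2B_1 a_{01}$. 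Repeating the same computation with $\vec{u}_{0j}$ for each $j=2,\ldots,N$ gives the analogous identity with right-hand side $-2B_j a_{0j}$. Subtracting the two identities produces exactly \eqref{construct312n} after trivial rearrangement.

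For equation \eqref{volcalmengRn}, since $A_0$ lies in the relative interior of the boundary $(N-1)$-simplex $A_1 A_2\cdots A_N$ (which is ensured in the floating case by the strict inequality \eqref{ineqq1}), joining $A_0$ to each face opposite a vertex $A_j$ partitions that simplex into $N$ sub-$(N-1)$-simplexes of the form $\{A_1,\ldots,A_0,\ldots,A_N\}$ introduced in the notation of Section~2. Additivity of the $(N-1)$-dimensional volume over this partition gives the stated sum formula directly, and each term is then replaced by the Caley-Menger determinant displayed just above the theorem, whose entries depend only on the edges $\{a_{ij}\}_{i,j\neq j}$ and the variable edges $\{a_{0k}\}_{k\neq j}$, yielding \eqref{volcalmengRn}.

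The main obstacle is not the algebra but guaranteeing that the decomposition is legitimate for every weighted Fermat tree that belongs to the Fermat-Frechet multitree solution. Concretely, one has to invoke the strict inequality \eqref{ineqq1} to place $A_0$ strictly in the relative interior (so as to distinguish the floating from the absorbing case and ensure the sub-simplexes are non-degenerate with a common orientation), and use the Dekster-Wilker conditions $s\leq a_{ij}\leq \ell$ with $(\ell,s)\in DW_{\mathbb{R}^{N}}(\ell,s)$ from Lemma~\ref{DW1} to guarantee that the boundary simplex itself is non-degenerate. Under these hypotheses the signs of the Caley-Menger determinants in \eqref{volcalmengRn} are unambiguous, the partition is faithful, and the identity is both well-defined and necessary.
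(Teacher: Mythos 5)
Your proposal is correct and follows essentially the same route as the paper: the paper also obtains the system $\sum_{i=1}^{N}B_{i}\cos\alpha_{j0i}=0$ (via the first variation formula $\frac{da_{0i}}{da_{0j}}=\cos\angle A_{i}A_{0}A_{j}$, which is just the directional-derivative form of your inner product with $\vec{u}_{0j}$), substitutes the cosine law and subtracts to get \eqref{construct312n}, and then uses additivity of volume over the partition of the simplex by the interior point $A_{0}$ together with the Caley--Menger determinant to get \eqref{volcalmengRn}. Your closing remarks on invoking \eqref{ineqq1} and the Dekster--Wilker conditions to justify that $A_{0}$ is interior and the sub-simplexes are non-degenerate are sound and, if anything, slightly more careful than the paper's own treatment.
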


\begin{proof}

It is well known that the first variational formula of a line segment $A_{0}A_{i}$ with respect to a physical parameter $A_{0}A_{j}$ (line segment), which meet at a point $A_{0}$ in $\mathbb{R}^{N}$ is given by:

\begin{equation}\label{firstvariationlinesegment}
\frac{d a_{0i}}{d a_{0j}}=\cos\angle A_{i}A_{0}A_{j},
\end{equation}
for $i,j=1,2,,\cdots, N.$
Differentiating the objective function\\
 $f(a_{01},a_{02},\cdots ,a_{0N})=\sum_{i=1}^{N}B_{i}a_{0i}$ with respect to $a_{0i},$ for $i=1,2,\cdots N,$ we get:

\begin{equation}\label{eq10i}
\sum_{i=1}^{N}B_{i}\cos(\alpha_{10i})=0,
\end{equation}
\begin{equation}\label{eq20i}
\sum_{i=1}^{N}B_{i}\cos(\alpha_{20i})=0,
\end{equation}
$\cdots$
\begin{equation}\label{eqN0i}
\sum_{i=1}^{N}B_{i}\cos(\alpha_{N0i})=0.
\end{equation}

By subtracting (\ref{eq20i})-(\ref{eqN0i}) from (\ref{eq10i}) and by applying the cosine law in $\triangle A_{j}A_{0}A_{i}$ for $j=1,2,\cdots ,N,$ we derive (\ref{construct312n}).

The interior point $A_{0}$ of $A_{1}A_{2}\cdots A_{N}=\cup{A_{1}A_{2}\cdots A_{0}\cdots A_{n}}$ gives the property of addition of Euclidean volumes of simplexes in $\mathbb{R}^{N}.$
By substituting in $\operatorname{Vol}(A_{1},A_{2},\cdots,A_{N}),$ $\operatorname{Vol} (A_{1},A_{2},\cdots,A_{0},\cdots,A_{N})$ the corresponding Caley Menger determinant depending on the edge lengths $a_{ij},$ for $i,j=0,1,2,\cdots, N,$ we obtain (\ref{volcalmengRn}).

\end{proof}

The edge lengths $a_{0i}$ are the branches of the weighted Fermat tree $\{a_{01},a_{02},\cdots,a_{0N}\}$ consist the branching solution of the weighted Fermat problem. The field of branching solutions in geometric optimization has been developed by Ivanov and Tuzhilin in \cite{IvanovTuzhilin:01b}.

\begin{lemma}[Isoperimetric inequality of a regular simplex in $\mathbb{R}^{N}$]\cite[p.~274]{Hadwiger:57}\label{regmaxvol1}
An Euclidean $N$-simplex is of maximal volume if it is regular.
\end{lemma}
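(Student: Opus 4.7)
The lemma asserts the classical isoperimetric inequality for simplices: among all $N$-simplices with a fixed total facet area $S = \sum_{i=0}^{N} \operatorname{Vol}_{N-1}(F_{i})$, the regular simplex maximizes the $N$-volume $V$. The plan is to combine a compactness argument for the existence of a maximizer with a Lagrange-multiplier analysis at the critical point, and then upgrade the resulting first-order symmetry to full regularity.

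First I would verify existence. After quotienting by the group of rigid motions of $\mathbb{R}^{N}$, the set of non-degenerate $N$-simplices with surface area $S$ is a finite-dimensional manifold on which $V$ is continuous, bounded from above, and tends to zero on every sequence degenerating to a lower-dimensional flat configuration. A compact exhaustion therefore produces a maximizer $\Sigma^{\ast}$ that is non-degenerate.

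Next I would exploit the identity $V = rS/N$, where $r$ is the inradius, obtained by decomposing $\Sigma$ into $N+1$ sub-simplices with common apex at the incenter. For fixed $S$ the maximization of $V$ thus reduces to the maximization of $r$. Placing the incenter at the origin, one writes $\Sigma = \bigcap_{i=0}^{N}\{x : \langle x, n_{i}\rangle \le r\}$ for unit outward normals $n_{i}$ satisfying the divergence identity $\sum_{i=0}^{N} \operatorname{Vol}_{N-1}(F_{i})\, n_{i} = 0$. A Lagrange-multiplier computation, using the first-variation formula (\ref{firstvariationlinesegment}) combined with the Cayley--Menger expressions for the face areas and for the volume, then forces $\operatorname{Vol}_{N-1}(F_{i})$ to be independent of $i$ and every dihedral angle of $\Sigma^{\ast}$ to take a common value.

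The main obstacle will be converting these first-order symmetries (equal facet areas and equal dihedral angles) into the rigid statement that all $\binom{N+1}{2}$ edge lengths are equal. I plan to settle this by a symmetry-plus-uniqueness argument: the variational equations characterizing $\Sigma^{\ast}$ are invariant under the $S_{N+1}$-action that permutes facet labels, and by strict concavity of $\log V$ on the level set $\{S = \mathrm{const}\}$ (provable via a Brunn--Minkowski-type comparison between two simplices sharing an $(N-1)$-facet) the maximizer is unique modulo rigid motion. Hence the maximizer must be fixed by the full $S_{N+1}$ symmetry, and the only simplex in $\mathbb{R}^{N}$ with that full symmetry is the regular one, completing the proof.
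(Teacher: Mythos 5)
First, note that the paper does not prove this lemma at all: it is quoted from Hadwiger \cite[p.~274]{Hadwiger:57} and used as a black box, so your proposal can only be measured against the classical argument, not against anything in the text. Your skeleton (existence of a maximizer, the cone decomposition giving $V=rS/N$ and the reduction to maximizing the inradius, first-order conditions, then symmetry) has the right shape, and the identity $V=rS/N$ is correct. But the two steps that carry all the content are asserted rather than proved, and the tools you name for them would not deliver. The Lagrange-multiplier step is never computed: formula (\ref{firstvariationlinesegment}) of the paper is the derivative of one segment length with respect to another segment sharing an endpoint, and says nothing about how $V$ and $S$ vary as the supporting hyperplanes of a simplex are moved. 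The first variations you actually need are $\delta V=\sum_i A_i\,\delta h_i$ for the support distances $h_i$, together with an expression for $\delta A_i$ in terms of the dihedral angles; without writing these down you cannot conclude that the facet areas or the dihedral angles are equal at the critical point. Had you actually obtained equal dihedral angles, you would be done immediately: the Gram matrix of the unit outward normals would have constant off-diagonal entries, its one-dimensional kernel would be spanned by $(1,\dots,1)$, and since that kernel must contain the facet-area vector, all $A_i$ coincide and the normal configuration, hence the simplex, is regular. So your closing paragraph addresses a difficulty that does not exist, while leaving unproved the step that does.

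The second genuine gap is the uniqueness claim. You propose strict concavity of $\log V$ on the level set $\{S=\mathrm{const}\}$ via a ``Brunn--Minkowski-type comparison,'' but the set of simplices with fixed surface area carries no natural convex structure on which this could be read off: the Minkowski combination of two simplices is in general not a simplex, and its surface area is not the interpolation of the two surface areas, so Brunn--Minkowski does not apply to the constrained problem. Without uniqueness, the argument ``the problem is $S_{N+1}$-symmetric, hence the maximizer is $S_{N+1}$-invariant'' collapses, since symmetry of the problem only permutes the set of maximizers. A minor further point: in the existence step you must also exclude maximizing sequences whose diameter tends to infinity at fixed surface area (long slivers of small volume), not only sequences degenerating to a flat configuration; this is easily repaired via $V=rS/N$, but it should be said. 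As it stands the proposal is a plausible outline of the classical proof, not a proof.
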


\begin{proposition}\label{boundbranchtrees}
Each branch of the weighted Fermat-Frechet solution is less than the diameter $2 \sqrt{\frac{N}{a (N+1)}}$ of the circumscribed sphere of a regular simplex with equal edge length $a\equiv \frac{\sum_{i,j}a_{ij}}{\frac{N(N+1)}{2}}.$
\end{proposition}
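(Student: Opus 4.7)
The strategy has three steps: locate the weighted Fermat--Torricelli point $A_0$ inside the simplex, bound each branch by the diameter of the circumscribed sphere of that simplex, and then invoke the isoperimetric Lemma~\ref{regmaxvol1} to compare with the regular simplex of average edge $a$. First I would show $A_0$ lies in the closed simplex $A_1 A_2 \cdots A_{N+1}$. In the absorbing case (Theorem~\ref{theorrn1}(II)) this is immediate since $A_0$ coincides with a vertex. In the floating case (Theorem~\ref{theorrn1}(I)), let $K$ denote the closed convex hull of the vertices; if $A_0 \notin K$, firm nonexpansiveness of the projection $\pi_K$ onto $K$ yields $|\pi_K(A_0) - A_i|^2 \le |A_0 - A_i|^2 - |A_0 - \pi_K(A_0)|^2 < |A_0 - A_i|^2$ for every $A_i \in K$ (using $\pi_K(A_i) = A_i$). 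Therefore $\sum_i B_i |\pi_K(A_0) - A_i| < \sum_i B_i |A_0 - A_i|$, contradicting the minimality of $A_0$; so $A_0 \in K$ in all cases.

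Second, the closed ball bounded by the circumscribed sphere of the simplex (of radius $R$) contains all vertices $A_i$ and hence the convex hull $K \ni A_0$; consequently $a_{0i} = |A_0 A_i| \le 2R$ for every $i$. It remains to bound $R$ by the circumradius $R_0 = a\sqrt{N/(2(N+1))}$ of the regular $N$-simplex with edge $a$, which would yield $a_{0i} \le 2R_0$, matching the stated diameter. The intended route is Lemma~\ref{regmaxvol1}: among all Euclidean $N$-simplexes with prescribed edge-sum $\tfrac{N(N+1)}{2}a$, the regular simplex is extremal; together with the Dekster--Wilker non-degeneracy of Lemma~\ref{DW1}, a Lagrange-multiplier analysis of the symmetric function $R^2(\{a_{ij}\})$ over the constrained edge simplex should single out the regular configuration as the maximizer of $R$.

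The main obstacle will be this last comparison. The circumradius $R$ is not a monotone function of the individual edges and is unbounded in terms of the edge sum once degenerate simplexes are admitted (thin near-degenerate shapes have arbitrarily large $R$), so the Dekster--Wilker constraint of Lemma~\ref{DW1} on the spread $\ell/s$ is indispensable to cut off these pathologies. Verifying rigorously that the regular simplex is the global maximizer of $R$ over the Dekster--Wilker admissible family with fixed mean edge, and checking that no competing boundary extremum on $\partial DW_{\mathbb{R}^N}(\ell, s)$ dominates it, is the technical heart of the proof.
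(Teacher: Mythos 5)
Your first two steps are sound and, in fact, more careful than what the paper records: the paper's own proof is a two-line assertion that quotes the circumradius of the regular simplex and then invokes Lemma~\ref{regmaxvol1}, with no discussion of the location of $A_{0}$ or of the passage from the convex hull to the circumsphere. The genuine gap is exactly the step you flag as ``the technical heart,'' and it cannot be closed along the route you propose. Lemma~\ref{regmaxvol1} is a statement about \emph{volume}, and maximal volume does not transfer to maximal circumradius; the two quantities move in opposite directions under the relevant deformations (flattening a simplex decreases its volume while increasing its circumradius). Concretely, for $N=2$ the triangle with sides $(\ell,\ell,\ell/2)$ lies in $DW_{\mathbb{R}^{2}}(\ell,s)$ (here $\lambda_{2}(\ell)=\ell/2$), its mean edge is $\bar a=5\ell/6$, and its circumradius is $\frac{\ell/2}{2\sin C}$ with $\cos C=7/8$, i.e.\ approximately $0.516\,\ell$, whereas the equilateral triangle with the same perimeter has circumradius $\bar a/\sqrt{3}\approx 0.481\,\ell$. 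So the regular simplex is \emph{not} the maximizer of $R$ over the Dekster--Wilker family with fixed edge sum, and the Lagrange-multiplier analysis you sketch would at best identify it as a critical point, not a global maximum. Any correct argument must bound the branches by something other than the circumradius of the mean-edge regular simplex, or must use the volume lemma in an essentially different way.

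A secondary point: the quantity $2\sqrt{N/(a(N+1))}$ in the statement is not the diameter $2a\sqrt{N/(2(N+1))}$ of the circumscribed sphere of the regular simplex of edge $a$ (it is not even dimensionally a length), and your $R_{0}=a\sqrt{N/(2(N+1))}$ silently corrects this, so the inequality $a_{0i}\le 2R_{0}$ you aim for is not literally the one claimed. Since the paper's proof consists only of the two sentences described above, you have not overlooked a hidden argument; but as written your proposal establishes only the containment $a_{0i}\le 2R(\{a_{ij}\})$ and leaves the decisive comparison $R(\{a_{ij}\})\le R_{0}$ unproven and, as the example shows, false in general.
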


\begin{proof}
The radius of the circumsphere of a regular $N$ simplex  in $\mathbb{R}^{N}$ is given by
$2 \sqrt{\frac{N}{a (N+1)}.}$ By applying lemma~\ref{regmaxvol1}, we derive that:
\[a_{0i} < 2 \sqrt{\frac{N}{a (N+1)}},\] for $a\equiv \frac{\sum_{i,j}a_{ij}}{\frac{N(N+1)}{2}}.$
\end{proof}

B. The weighted Fermat-Frechet problem in the hyperbolic space $\mathbb{H}_{K}^{N},$ for $K<0.$ \\

Dekster and Wilker (\cite[(1.5),(1.6)]{DeksterWilker:91a},\cite{DeksterWilker:92}) describe a class of incongruent hyperbolic simplexes in terms of $\ell, s,$ such that: $\ell=\max_{i,j}a_{ij},$ $s=\min_{i,j}a_{ij}$ and
\[ \lambda_{N}(\ell )=\]\[ \left\{
\begin{array}{ll}
  \frac{1}{K}\cosh^{-1}( \sqrt{1+2(1-\frac{2}{N})\sinh ^{2}K\frac{\ell}{2}} \sqrt{1+2(1-\frac{2}{N+2})\sinh ^{2}K\frac{\ell}{2}}) & for\ even\ N\ge 2, \\
  \frac{2}{K} \sinh^{-1}(\sqrt{1-\frac{2}{(N+1)}}\sinh K\frac{\ell}{2}) & for\ odd\ N\ge 3
\end{array}
\right.
\]

\begin{definition}
The Dekster-Wilker hyperbolic domain $DW_{\mathbb{H}_{K}^{N}}(K\ell, K s)$ is a closed domain in $\mathbb{R}^{2}$ between the ray $s=\ell,$
and the graph of a function $\lambda_{N}(\ell ),$ $\ell \ge 0,$ which is less than $K \ell$ for $K \ell \ne 0,$
\end{definition}

\begin{lemma}{Dekster-Wilker incongruent simplexes in $\mathbb{H}^{N},$ \cite[(1.5),(1.6)]{DeksterWilker:91a}}\label{DW1h}

If $s\le a_{ij}\le \ell,$ for $K \ell, K s\in DW_{\mathbb{H}_{K}^{N}}(K\ell,K s),$   then $a_{ij}$ determine the edge lengths of incongruent simplexes in $\mathbb{H}_{K}^{N}.$
\end{lemma}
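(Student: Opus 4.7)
The plan is to adapt the distance-matrix method of Seidel and Neumaier to the hyperbolic setting, exactly as Dekster and Wilker do in their cited papers. The realizability of a point configuration with prescribed pairwise distances $a_{ij}$ in $\mathbb{H}_K^N$ is equivalent to a signature condition on the hyperbolic Gram matrix $G = [\cosh(\sqrt{-K}\, a_{ij})]_{i,j=0}^{N}$: namely, $G$ must have signature $(1,N)$ (one positive and $N$ negative eigenvalues after the appropriate sign convention), which is the hyperbolic analogue of the Schoenberg positivity condition quoted earlier in the introduction. So the first step is to reduce the lemma to verifying this signature condition whenever $s \le a_{ij} \le \ell$ with $(K\ell, Ks) \in DW_{\mathbb{H}_K^N}(K\ell, Ks)$.

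Next I would treat the \emph{extremal} case, namely the regular hyperbolic $N$-simplex with all edges equal to $\ell$. Its Gram matrix is a rank-two combination of the identity and the all-ones matrix, so its eigenvalues can be computed in closed form as explicit functions of $\cosh(\sqrt{-K}\,\ell)$. The signature condition then yields a sharp threshold relating $\sinh^2(K\ell/2)$ to $N$; this is precisely where the formula for $\lambda_N(\ell)$ comes from, with the parity split (even vs.\ odd $N$) reflecting the parity split in the classical Dekster--Wilker Euclidean bound already recorded in Lemma~\ref{DW1}. The half-angle identity $\cosh(Kx) = 1 + 2\sinh^2(Kx/2)$ is what produces the $\cosh^{-1}$ and $\sinh^{-1}$ expressions appearing in the statement.

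With the extremal case in hand, the third step is a perturbation argument: for fixed $\ell$, if $s \ge \lambda_N(\ell)$ then any matrix $[\cosh(\sqrt{-K}\, a_{ij})]$ with $s \le a_{ij} \le \ell$ lies in the convex cone of matrices with the correct signature, since one can interpolate between the regular-$\ell$ Gram matrix and any admissible Gram matrix without crossing the boundary where a negative eigenvalue passes through zero. This interpolation step is clean in hyperbolic geometry because $\cosh$ is monotone on $[0,\infty)$, so the relevant monotonicity and convexity estimates transfer from the Euclidean Dekster--Wilker proof after replacing squared distances with hyperbolic cosines of distances. Finally, incongruence of the resulting simplexes is obtained by counting edge-to-vertex-pair assignments modulo the action of the symmetric group on vertices, giving the $\tfrac{1}{2}N(N+1)!/(N+1)!$ count already used in the Euclidean Theorem~\ref{theorrn1}.

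The main obstacle is the sharpness of the threshold $\lambda_N(\ell)$: one must verify that below this value the signature actually fails, so the bound cannot be weakened. In the Euclidean proof this follows from an explicit degenerate configuration; in the hyperbolic case it requires checking that the hyperbolic Gram matrix of the critical configuration has a zero eigenvalue exactly at $s = \lambda_N(\ell)$, which entails a genuine, if routine, computation with the characteristic polynomial of the hyperbolic Gram matrix of a near-regular simplex. Since the assertion is precisely the content of \cite[(1.5),(1.6)]{DeksterWilker:91a} and \cite{DeksterWilker:92}, I would reference those papers for this sharpness step rather than reproduce it in full.
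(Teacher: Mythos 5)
The first thing to say is that the paper does not prove this lemma at all: it is imported verbatim from \cite[(1.5),(1.6)]{DeksterWilker:91a} as a quoted result, with no argument beyond the citation. So there is no in-paper proof to compare against; the honest ``paper's proof'' is the citation itself. Your sketch is at least consistent with what the introduction attributes to Dekster--Wilker (distance-matrix techniques in the tradition of Seidel \cite{Seidel:69} and Neumaier \cite{Neumaier:81}), and your reduction of realizability in $\mathbb{H}_K^N$ to a signature condition on $[\cosh(\sqrt{-K}\,a_{ij})]$ is the correct starting point, as is the closed-form eigenvalue computation for the all-edges-equal-to-$\ell$ Gram matrix, which is a rank-one perturbation of a multiple of the identity.

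The genuine gap is your third step. The set of symmetric matrices with a prescribed indefinite signature (here $(N,1)$ up to sign convention) is \emph{not} a convex cone, so you cannot conclude that an arbitrary admissible Gram matrix with entries constrained only by $s\le a_{ij}\le\ell$ has the right signature by ``interpolating from the regular-$\ell$ matrix without a negative eigenvalue passing through zero'' --- whether an eigenvalue crosses zero along the segment is precisely the question, and monotonicity of $\cosh$ gives entrywise monotonicity of the Gram matrix, not eigenvalue control for indefinite forms. This uniform control over all assignments of the $\tfrac{1}{2}N(N+1)$ lengths to edges is the entire content of the Dekster--Wilker theorem, and your sketch defers it (together with the sharpness of $\lambda_N(\ell)$) back to the very papers being cited; in effect the proposal proves nothing beyond what the citation already asserts. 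A smaller point: counting edge-to-vertex-pair assignments modulo the symmetric group bounds the \emph{number} of candidate simplexes (as in Theorem~\ref{theorrn1}), but does not by itself establish that distinct assignments yield pairwise incongruent simplexes. If the intent is, like the paper's, simply to invoke Dekster--Wilker, say so and stop; if the intent is to reprove the lemma, the interpolation step must be replaced by the actual Dekster--Wilker argument.
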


In \cite[Theorem~1, Proposition~2,p.~97]{NodaSakaiMorimoto:91}, Noda, Sakai, Morimoto derive a characterization of solutions (absorbing/floating Fermat-Torricelli trees)for the Fermat-problem in $\mathbb{H}_{K}^{N}$ and more general for simply connected smooth Riemannian manifolds with non-positive sectional curvature (Hadamard manifolds).
An extension of characterizations of solutions of Fermat trees for the weighted case leads to the weighted Fermat-Frechet solution for a given $\frac{N(N+1)}{2}$-tuple of positive real numbers $a_{ij}$ determining the edge lengths of incongruent $N$-simplexes in $\mathbb{H}_{K}^{N}.$

We cannot reformulate Theorem~\ref{TheorfloatingRN} for the hyperbolic case, by using directly the addition property of Volumes of hyperbolic simplexes. In \cite[Theorem, p.~200]{Milnor:94}, Milnor shows that the addition property of the Lobachevsky function, which is connected with hyperbolic volume holds for ideal (having their vertices at infinity) hyperbolic 3-simplexes and computes by using power series the volume of an ideal regular $N$-simplex
\cite[pp.~206-207]{Milnor:94}.

In \cite{HaagerupMunhkholm:81}, Haagerup and Munkholm proved an isoperimetric inequality of hyperbolic simplexes in $\mathbb{H}^{N}.$
\begin{lemma}{Isoperimetric inequality of a hyperbolic regular simplex in $\mathbb{H}_{K}^{N},$ \cite{HaagerupMunhkholm:81}}\label{regmaxvol1h}
A hyperbolic $N$-simplex is of maximal volume if it is ideal and regular.
\end{lemma}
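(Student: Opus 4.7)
The plan is to follow the two-stage structure of the original Haagerup--Munkholm argument: first show that any volume maximizer must be ideal (all vertices on $\partial \mathbb{H}_K^N$), and then show that among ideal $N$-simplices the regular one is the unique maximizer up to isometry.

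For the first stage, I would extend the volume functional continuously from $(\mathbb{H}_K^N)^{N+1}$ to the compactification $(\overline{\mathbb{H}}_K^N)^{N+1}$ obtained by adjoining the ideal boundary in the ball model. Finiteness of this extension on partially- or fully-ideal configurations follows from Milnor's Lobachevsky-function formulas for ideal simplex volumes, already invoked in the discussion preceding the lemma. Compactness of the extended parameter space yields a maximizer $\sigma^{\ast}$. To rule out any interior maximizer, I would fix all but one vertex and vary the remaining vertex $v_j$ along a geodesic ray from the centroid of the opposite face through $v_j$ to the ideal boundary. The standard first-variation formula for hyperbolic volume (expressing $\frac{d}{dt}\operatorname{Vol}$ in terms of the hyperbolic height of the moving vertex above the opposite face) shows that the volume is strictly monotone increasing along this ray, contradicting maximality. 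Iterating over all vertices forces $\sigma^{\ast}$ to be ideal.

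For the second stage, I would pass to the upper half-space model, normalize by sending one ideal vertex to $\infty$ via an isometry, and parametrize the remaining $N$ ideal vertices as points $w_1,\dots,w_N \in \mathbb{R}^{N-1}$. The volume then reduces to an explicit Lobachevsky-type integral depending only on the Euclidean simplex spanned by $w_1,\dots,w_N$. A first-variation calculation in these coordinates, combined with induction on $N$ and a Schwarz-type symmetrization across a hyperplane of symmetry of the configuration, identifies the orbit of the regular ideal simplex as the unique critical configuration and confirms that it is a global maximum rather than a saddle.

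The main obstacle is the symmetrization argument in the second stage: the integrand defining ideal hyperbolic volume is neither convex nor manifestly Schur-symmetric in the coordinates $(w_1,\dots,w_N)$, and monotonicity of the volume under one-dimensional symmetrization requires the intricate hyperbolic-cosine inequalities established in the original Haagerup--Munkholm paper, whose proofs themselves proceed by induction on dimension with carefully tuned base cases $N=2,3$. An alternative modern route uses Thurston's barycentric straightening together with Gromov's proportionality principle to bypass explicit symmetrization, but it imports substantial machinery from bounded cohomology that lies outside the framework of the present paper; for this reason, citing the original Haagerup--Munkholm theorem as the lemma here, rather than reproving it, is the appropriate course.
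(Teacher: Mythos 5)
The paper offers no proof of this lemma at all: it is imported verbatim as a cited result of Haagerup and Munkholm, and the surrounding text only uses it (together with Milnor's volume computations) as a black box. Your proposal therefore cannot be compared against an internal argument; what it can be compared against is the original Haagerup--Munkholm proof, and there your outline is broadly faithful. Stage one (compactify the configuration space, then show a maximizer must be ideal) is correct, though it can be done more simply than by a first-variation formula: pushing a non-ideal vertex outward along a ray produces a simplex strictly containing the old one, so monotonicity of volume under inclusion already forces ideality. Stage two (half-space model, one vertex at $\infty$, volume as an explicit integral over a Euclidean $(N-1)$-simplex, then an extremal argument identifying the regular configuration) is indeed the architecture of the original paper.

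The honest assessment is that your attempt is a strategy sketch rather than a proof: the entire analytic content of the theorem lives in the stage-two inequalities, which you explicitly do not establish and instead defer back to Haagerup--Munkholm. You recognize this yourself and conclude that citing the theorem is the right course --- which is exactly what the paper does. So there is no divergence of approach to report, but neither should the proposal be read as a self-contained proof; if it were graded as one, the unproved symmetrization/monotonicity step in stage two is the gap, and it is not a small one (the original proof of those inequalities occupies most of the Acta paper and proceeds by a delicate induction on dimension). One minor caution: the ``modern route'' you mention via barycentric straightening and the proportionality principle is usually set up with the maximality of the regular ideal simplex as an \emph{input} rather than an output, so it should not be advertised as an independent alternative proof without care.
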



\begin{theorem}  [The weighted Fermat-Frechet problem in $\mathbb{H}_{K}^{N}$]\label{thhn1}

The weighted Fermat-Frechet solution for a given $\frac{N(N+1)}{2}-$tuple $ a_{ij},$ such that $s\le a_{ij}\le \ell,$ for $\ell, s\in DW_{\mathbb{H}_{K}^{N}}(\ell, s)$ consists of a maximum number of $\frac{\frac{1}{2}N(N+1)!}{(N+1)!}$ weighted Fermat trees, which belong to one of the following two cases:

(I) If  for each index $k\in \{1,2,3,\cdots, N\},$
\begin{equation}\label{ineqq1h}
\sum_{i=1, i<j}^{N} B_{i}B_{j}\frac{\cosh Ka_{ik}\cosh Ka_{jk}-\cosh K a_{ij}}{\sinh Ka_{ik}\sinh Ka_{jk}}>\frac{B_{k}^2-\sum_{i=1,i<j}^{N}B_{i}^2}{2},
\end{equation}
we obtain the weighted floating Fermat-Torricelli tree $\{a_{01},a_{02},\cdots ,a_{0N}\}.$

(II) If there is an index $k\in {1,2,\cdots,N},$ such that:
\begin{equation}\label{ineqq2h}
\sum_{i=1, i<j}^{N} B_{i}B_{j}\frac{\cosh Ka_{ik}\cosh Ka_{jk}-\cosh Ka_{ij}}{\sinh Ka_{ik}\sinh Ka_{jk}}\le \frac{B_{k}^2-\sum_{i=1,i<j}^{N}B_{i}^2}{2},
\end{equation}
we obtain the weighted absorbing Fermat-Torricelli tree $\{a_{k1},a_{k2},\cdots ,a_{kN}\}.$

\end{theorem}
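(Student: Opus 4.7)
The plan is to parallel the proof of Theorem~\ref{theorrn1}, replacing each Euclidean ingredient with its hyperbolic analogue. Specifically, Kupitz--Martini's directional-derivative criterion is replaced by its weighted extension on Hadamard manifolds announced by Noda--Sakai--Morimoto \cite{NodaSakaiMorimoto:91}; the Euclidean cosine law is replaced by the hyperbolic cosine law in $\mathbb{H}_K^N$; and the Euclidean Dekster--Wilker Lemma~\ref{DW1} is replaced by Lemma~\ref{DW1h}. Existence and uniqueness of the weighted Fermat-Torricelli point for a fixed simplex follow from strict convexity and coercivity of $f(X)=\sum_{i=1}^N B_i\, d_{\mathbb{H}_K^N}(X,A_i)$, both of which persist on Hadamard manifolds.

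First I would fix one simplex $A_1A_2\cdots A_N$ in the incongruent class furnished by Lemma~\ref{DW1h} and analyse $f$ via the exponential map at a candidate minimizer. If the minimum is attained at an interior point $A_0$, smoothness of $f$ there gives the gradient equation $\sum_{i=1}^N B_i u_{0i}=\mathbf{0}$ in $T_{A_0}\mathbb{H}_K^N$, where $u_{0i}$ is the unit tangent at $A_0$ pointing to $A_i$. Squaring its norm yields
\[\sum_{i=1}^N B_i^2+2\sum_{1\le i<j\le N}B_iB_j\cos\angle A_iA_0A_j=0.\]
If instead the minimum is attained at a vertex $A_k$, then $f$ is non-smooth there and the one-sided directional derivative criterion (transported to the tangent space via $\exp_{A_k}$) characterises $A_k$ as a minimizer exactly when $\bigl\|\sum_{i\neq k}B_iu_{ki}\bigr\|\le B_k$, an inequality which when squared becomes the criterion complementary to (\ref{ineqq1h}).

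To convert these angular conditions into the stated edge-length inequalities I would invoke the hyperbolic cosine law in $\triangle A_iA_kA_j$:
\[\cos\angle A_iA_kA_j=\frac{\cosh Ka_{ik}\cosh Ka_{jk}-\cosh Ka_{ij}}{\sinh Ka_{ik}\sinh Ka_{jk}}.\]
Substituting this into the squared-norm inequality at $A_k$ produces (\ref{ineqq2h}); its simultaneous negation at every index $k$ forces the interior (floating) regime and produces (\ref{ineqq1h}). Finally, Lemma~\ref{DW1h} guarantees that the admissible edge tuple $\{a_{ij}\}$ supports at most $\frac{\frac{1}{2}N(N+1)!}{(N+1)!}$ mutually incongruent $N$-simplexes in $\mathbb{H}_K^N$; taking the union, over this class, of the unique weighted Fermat trees constructed above yields the asserted weighted Fermat-Frechet multitree.

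The main obstacle I anticipate is the careful treatment of the non-smooth directional-derivative analysis at the vertex $A_k$ on a curved manifold: one must verify that the one-sided derivative of $f$ along an arbitrary unit $v\in T_{A_k}\mathbb{H}_K^N$ coincides in leading order with its Euclidean model (using that $\exp_{A_k}$ is a radial isometry), so that the minimizing direction $v=-\sum_{i\neq k}B_iu_{ki}/\|\,\cdot\,\|$ reduces absorbing optimality to the scalar inequality $\bigl\|\sum_{i\neq k}B_iu_{ki}\bigr\|^{2}\le B_k^{2}$. Once this transfer is in hand, the passage from angles to the $\cosh K/\sinh K$ ratios via the hyperbolic cosine law is automatic and mirrors the Euclidean computation verbatim.
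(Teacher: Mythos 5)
Your proposal is correct and follows essentially the same route as the paper: existence and uniqueness via convexity and compactness of the distance function on $\mathbb{H}_K^N$, the Noda--Sakai--Morimoto weighted characterization of floating versus absorbing cases through norms of weighted sums of unit tangent vectors, substitution of the hyperbolic law of cosines to convert the angular conditions into (\ref{ineqq1h}) and (\ref{ineqq2h}), and Lemma~\ref{DW1h} to bound the number of incongruent simplexes and hence of Fermat trees. Your write-up is in fact more explicit than the paper's about the squared-norm computation and the non-smooth analysis at a vertex, but the underlying argument is the same.
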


\begin{proof}
The existence and uniqueness of the weighted Fermat-Torricelli point $A_{0}$ in $\mathbb{H}_{K}^{N}$ is given by compactness and convexity arguments of the distance function $a_{0i}$ following Thurston \cite[Theorem~2.5.8]{Thurston:97} under the weighted conditions of Noda-Sakai-Morimoto gives a maximum number of weighted Fermat trees for $s\le a_{ij}\le \ell:$ $(Ks,K\ell)\in DW_{\mathbb{H}_{K}^{N}}(\ell, s).$
By substituting the hyperbolic law of cosines in $\triangle A_{i}A_{}jA_{k}$ in the weighted norm inequalities of unit tangent vectors, we obtain (\ref{ineqq1h}) and (\ref{ineqq2h}).
\end{proof}

\begin{theorem}[The hyperbolic Fermat-Torricelli Frechet solution]\label{floatmultitreehn}
The following equations depending on the variable hyperbolic edge lengths $a_{01},a_{02},\cdots , a_{0N}$ and the hyperbolic edge lengths $\{a_{ij}\}$ provide a necessary condition for the location of the weighted floating Fermat-Torricelli trees $\{a_{01},a_{02},\cdots ,a_{0N}\},$ which belong to the weighted Fermat-Frechet multitree solution:
\begin{align}\label{construct312nh}
\sum_{i=2}^{N}\frac{B_{i}}{\sinh K a_{01}\sinh K a_{0i}}(\cosh Ka_{01}\cosh Ka_{0i}-\cosh Ka_{1i})-\nonumber \\ \sum_{i=1,{i\ne
j}}^{N}\frac{B_{i}}{\sinh Ka_{0j} \sinh Ka_{0i}}(\cosh Ka_{0j}\cosh Ka_{0i}-\cosh Ka_{ji})=0,
\end{align}

\begin{equation}\label{hyperbolicvoln}
\operatorname{Vol(A_{1},A_{2},\cdots,A_{0}\cdots, A_{N})}<\sqrt{N}\sum_{k=0}^{\infty}\frac{\beta (\beta+1)\cdots (\beta+k-1)}{(N+2k)!}A_{N,k}
\end{equation}

with $\beta=\frac{N+1}{2},$  $A_{N,k}=\sum_{i_{0}+\cdots +i_{N}=k, i_{a}\ge 0}\frac{(2i_{0})!}{i_{0}!}\cdots \frac{(2i_{N})!}{i_{N}!},$ for $j=2,\cdots, N.$

\end{theorem}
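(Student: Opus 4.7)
The plan is to prove (\ref{construct312nh}) by a direct variational argument paralleling Theorem~\ref{TheorfloatingRN}, and to establish (\ref{hyperbolicvoln}) by combining Lemma~\ref{regmaxvol1h} with Milnor's explicit power series for the volume of the ideal regular $N$-simplex.

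For (\ref{construct312nh}) I would start from the Riemannian first variation of arc-length, which in any space form reads $\frac{d\,a_{0i}}{d\,a_{0j}}=\cos\alpha_{j0i}$, exactly the relation (\ref{firstvariationlinesegment}) that drives the Euclidean proof. Differentiating the weighted length functional $f(a_{01},\ldots,a_{0N})=\sum_{i=1}^N B_i a_{0i}$ in each of the $N$ independent geodesic directions $A_{0}A_{j}$ produces, at the floating Fermat--Torricelli point, the equilibrium system
\[ \sum_{i=1}^{N}B_i\cos\alpha_{j0i}=0,\qquad j=1,\ldots,N, \]
which is the hyperbolic analogue of (\ref{eq10i})--(\ref{eqN0i}). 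I would then substitute the hyperbolic law of cosines
\[ \cos\alpha_{j0i}=\frac{\cosh Ka_{0j}\cosh Ka_{0i}-\cosh Ka_{ji}}{\sinh Ka_{0j}\sinh Ka_{0i}} \]
inside each triangle $\triangle A_{j}A_{0}A_{i}$, and finally subtract the $j=1$ equation from the $j$-th one. The factor $B_{i}/(\sinh Ka_{0j}\sinh Ka_{0i})$ appearing in (\ref{construct312nh}) is exactly what converts each summand back to $B_{i}\cos\alpha_{j0i}$, so that after collecting the diagonal contributions $i=j$ and $i=1$ the two resulting sums combine into precisely (\ref{construct312nh}).

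For the volume bound (\ref{hyperbolicvoln}) the approach must be different from the Euclidean one, because hyperbolic volume fails the Cayley--Menger-type additivity that made (\ref{volcalmengRn}) possible in $\mathbb{R}^{N}$: as the passage just before the theorem records, Milnor's addition formula for the Lobachevsky function is confined to the ideal setting. I would therefore not try to add the $N$ sub-simplex volumes; instead I would bound a single sub-simplex $\{A_{1},\ldots,A_{0},\ldots,A_{N}\}$ from above via Lemma~\ref{regmaxvol1h} (the Haagerup--Munkholm isoperimetric inequality), which says that any hyperbolic $N$-simplex is majorised by the ideal regular one. Milnor's evaluation \cite[pp.~206--207]{Milnor:94} of the latter volume as a power series in $\beta=(N+1)/2$ with coefficients $A_{N,k}=\sum_{i_{0}+\cdots+i_{N}=k,\,i_{a}\ge 0}\frac{(2i_{0})!}{i_{0}!}\cdots\frac{(2i_{N})!}{i_{N}!}$ produces the explicit right-hand side of (\ref{hyperbolicvoln}), and the inequality is strict because a proper Dekster--Wilker sub-simplex is never both ideal and regular.

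The step I expect to be the main obstacle is the second one. The first equation is essentially book-keeping once the Riemannian first variation and the hyperbolic cosine law are in place. The volume estimate, in contrast, requires one to identify a sharp majorant inside the Dekster--Wilker admissible region and to transport Milnor's ideal regular volume uniformly across the combinatorial choice of interior sub-simplex, which is exactly where the non-additivity of hyperbolic volume pinches: any attempt to promote the bound to a closed system on the model of (\ref{volcalmengRn}) fails, and only a necessary inequality survives, which is why the theorem itself is stated as a volume bound rather than an equality.
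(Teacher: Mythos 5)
Your proposal follows essentially the same route as the paper: equation (\ref{construct312nh}) is obtained by differentiating the weighted length functional via the first variation of arc length, subtracting the $j$-th stationarity relation from the first, and substituting the hyperbolic law of cosines in $\triangle A_{i}A_{0}A_{j}$; the bound (\ref{hyperbolicvoln}) is obtained by combining the Haagerup--Munkholm isoperimetric inequality (Lemma~\ref{regmaxvol1h}) with Milnor's power-series evaluation of the ideal regular simplex volume, exactly as the paper does. Your additional remark explaining why the Euclidean volume-additivity argument of (\ref{volcalmengRn}) cannot be transported to the hyperbolic setting matches the discussion the paper gives just before the theorem, so no genuinely different method is involved.
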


\begin{proof}
By differentiating the objective function $f(a_{01},a_{02},\cdots a_{0N})=\sum_{i=1}^{N}B_{i}$ with respect to arc length and by using the first variation formula of a variable arc length $a_{0i}$ with respect to arc length $a_{01}$, and with respect to $a_{0j}$ and by subtracting the two derived relations and then substituting $\cos\angle A_{i}A_{0}A_{j}$ by the  hyperbolic law of cosines in $\triangle A_{i}A_{0}A_{j},$ we get (\ref{construct312nh}). By substituting in Lemma~\ref{regmaxvol1h} Milnor's computation of a hyperbolic regular $N$ simplex (upper bound), we derive (\ref{hyperbolicvoln}).

\end{proof}

Assume that the Frechet class of hyperbolic simplexes contains an orthosimplex $A_{N+1}^{\circ}A_{1}^{\circ}A_{2}^{\circ}\cdots A_{N}^{\circ}$ with edge lengths $a_{ij}^{\circ}:$ $s\le a_{ij}^{\circ}\le \ell:$ $(Ks,K\ell)\in DW_{\mathbb{H}_{K}^{N}}(K\ell, K s).$ An $N$ dimensional orthosimplex (Euclidean or non-Euclidean) is an $N-$simplex whose faces $F_{0},\cdots F_{N}$ satisfy: $F_{i}\perp F_{j}$ for $|i-j|\ge 2$ (\cite[p.~207]{Milnor:94}).
By using Milnor's elegant computation of volume of a hyperbolic orthosimplex, we get a reformulation of Theorem~\ref{floatmultitreehn} using the following lemma:

\begin{lemma}{Computation of the Volume of an orthosimplex in $\mathbb{H}_{K}^{N}$,\cite[p.~208]{Milnor:94}}

\begin{align}\label{volorthosimplexHn}
\operatorname{Vol}((A_{N+1}^{\circ}A_{1}^{\circ},A_{2}^{\circ},\cdots, A_{N}^{\circ})=\nonumber\\a_{1}a_{2}\cdots a_{N}\sum_{i_{1},i_{2},\cdots i_{N}=0}^{\infty}\frac{\beta (\beta+1)\cdots(\beta+k-1)a_{1}^{2i_{1}}\cdots a_{N}^{2i_{N}}}{i_{1}!i_{2}!\cdots i_{N}!(2i_{1}+\cdots+2i_{N}+N)\cdots (2i_{N}+1)}
\end{align}
where

$k=i_{1}+i_{2}+\cdots+i_{N},$ $\beta=\frac{N+1}{2}$\\

$\tanh K a_{(N+1)1}=a_{1}, \tanh K a_{12}=\frac{a_{2}}{\sqrt{1-a_{1}^{2}}},\cdots,$ \\ $\tanh K a_{(N-1)N}=\frac{a_{N}}{\sqrt{1-a_{1}^{2}-\cdots-a_{N-1}^{2}}}. $

\end{lemma}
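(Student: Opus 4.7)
The plan is to follow the iterated-integration strategy that Milnor uses on page~208 of \cite{Milnor:94}: the right-angled structure of an orthosimplex allows one to reduce the hyperbolic volume integral to an $N$-fold nested integral in which each integration is elementary and the remaining factor can be expanded in a binomial series.

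First I would realize $\mathbb{H}_{K}^{N}$ in the projective (Klein) model of curvature $K<0$, place the right-angled vertex $A_{N+1}^{\circ}$ at the origin and align the orthogonal edges along the coordinate axes. In these coordinates the parameters $a_{1},\ldots ,a_{N}$ appearing in the statement are precisely the successive Klein coordinates of the vertices $A_{1}^{\circ},\ldots ,A_{N}^{\circ}$, and the relations $\tanh K a_{(N+1)1}=a_{1}$, $\tanh K a_{12}=a_{2}/\sqrt{1-a_{1}^{2}}$, etc., follow directly from the Klein-model distance formula. The orthogonality $F_{i}\perp F_{j}$ for $|i-j|\ge 2$ is what makes the region cut out by the faces iteratively describable as a ``staircase'' over the coordinate directions.

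Next I would write the hyperbolic volume element as
\[
dV_{\mathbb{H}_{K}^{N}} = \frac{dx_{1}\,dx_{2}\cdots dx_{N}}{(1-x_{1}^{2}-\cdots -x_{N}^{2})^{\beta}}, \qquad \beta=\tfrac{N+1}{2},
\]
and perform the triangular change of variables $x_{j}=y_{j}\sqrt{1-y_{1}^{2}-\cdots -y_{j-1}^{2}}$, which sends the orthosimplex onto the rectangular box $[0,a_{1}]\times\cdots\times [0,a_{N}]$ --- this is exactly where the orthogonality of the faces pays off. The resulting Jacobian combines with the denominator so that the integrand factors (up to the $(1-\sum y_{j}^{2})^{-\beta}$ piece) into variables that separate under nested integration. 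Expanding the remaining factor through the generalized binomial series
\[
(1-t)^{-\beta}=\sum_{k\ge 0}\frac{\beta(\beta+1)\cdots(\beta+k-1)}{k!}\,t^{k},
\]
distributing powers by the multinomial theorem, and integrating each monomial $y_{j}^{2i_{j}}$ over $[0,a_{j}]$ (which contributes $a_{j}^{2i_{j}+1}/(2i_{j}+1)$) produces the claimed series after collecting terms.

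The main obstacle will be the combinatorial bookkeeping in the change of variables: one must track how the exponent $\beta$ shifts at each nesting step so that the denominators line up with the triangular pattern $(2i_{1}+\cdots+2i_{N}+N)(2i_{2}+\cdots+2i_{N}+N-1)\cdots(2i_{N}+1)$ of the lemma, and verify that the multinomial coefficient $k!/(i_{1}!\cdots i_{N}!)$ cancels the $k!$ in the Pochhammer denominator to yield the stated numerator $\beta(\beta+1)\cdots(\beta+k-1)/(i_{1}!\cdots i_{N}!)$. A secondary point is justifying term-by-term integration of the multi-indexed series; this follows from absolute convergence on the box, since Lemma~\ref{DW1h} forces $a_{1}^{2}+\cdots +a_{N}^{2}<1$, placing the orthosimplex strictly inside the Klein ball so that a geometric majorant converges uniformly.
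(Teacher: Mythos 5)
The paper does not actually prove this lemma: it is quoted verbatim (as a citation of \cite[p.~208]{Milnor:94}) and used as a black box in Theorem~\ref{floatmultitreehn}, so there is no in-paper proof to compare against. Your proposal is essentially a reconstruction of Milnor's own derivation, and its skeleton is correct: work in the Klein model with volume element $dx_{1}\cdots dx_{N}/(1-|x|^{2})^{\beta}$, $\beta=\frac{N+1}{2}$; place $A_{N+1}^{\circ}$ at the origin so that the vertices become $A_{j}^{\circ}=(a_{1},\dots,a_{j},0,\dots,0)$, which is exactly what the relations $\tanh a_{(N+1)1}=a_{1}$, $\tanh a_{12}=a_{2}/\sqrt{1-a_{1}^{2}}$, \dots\ encode; expand $(1-|x|^{2})^{-\beta}$ by the binomial and multinomial theorems so that $\frac{(\beta)_{k}}{k!}\cdot\frac{k!}{i_{1}!\cdots i_{N}!}$ yields the stated numerator; and integrate term by term, with convergence guaranteed because the vertex $(a_{1},\dots,a_{N})$ lies in the open Klein ball, i.e.\ $a_{1}^{2}+\cdots+a_{N}^{2}<1$ (this is automatic for a non-ideal simplex and has nothing to do with Lemma~\ref{DW1h}, which you invoke gratuitously).

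The genuine flaw is your change of variables. If the substitution $x_{j}=y_{j}\sqrt{1-y_{1}^{2}-\cdots-y_{j-1}^{2}}$ really carried the orthosimplex onto the box $[0,a_{1}]\times\cdots\times[0,a_{N}]$ with a product integrand, the nested integrals would decouple and you would obtain the denominator $\prod_{j}(2i_{j}+1)$ --- not the telescoping product $(2i_{1}+\cdots+2i_{N}+N)(2i_{2}+\cdots+2i_{N}+N-1)\cdots(2i_{N}+1)$ that the lemma asserts and that you yourself write down as the target. (A direct check for $N=2$ shows the image of the orthoscheme under this substitution is not a box: the upper limit of $y_{2}$ still depends on $x_{1}$.) The repair is to drop the change of variables entirely: the orthosimplex is the staircase region $\{\,1\ge x_{1}/a_{1}\ge x_{2}/a_{2}\ge\cdots\ge x_{N}/a_{N}\ge 0\,\}$, and integrating the monomial $\prod_{j}x_{j}^{2i_{j}}$ over it from the inside out gives at the $j$-th step a factor $1/(2i_{j}+2i_{j+1}+\cdots+2i_{N}+N-j+1)$ together with the power $a_{j}^{2i_{j}+1}$, which is precisely how the telescoping denominators and the prefactor $a_{1}a_{2}\cdots a_{N}$ arise. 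With that correction your argument closes.
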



\begin{theorem}[The hyperbolic Fermat-Torricelli Frechet solution]\label{floatmultitreehn}
The following equations depending on the variable hyperbolic edge lengths $a_{01},a_{02},\cdots , a_{0N}$ and the hyperbolic edge lengths $\{a_{ij}\}$ provide a necessary condition for the location of the weighted floating Fermat-Torricelli trees $\{a_{01},a_{02},\cdots ,a_{0N}\},$ which belong to the weighted Fermat-Frechet (orthosimplex) solution:
\begin{align*}
\sum_{i=2}^{N}\frac{B_{i}}{\sinh K a_{01}\sinh K a_{0i}}(\cosh Ka_{01}\cosh Ka_{0i}-\cosh Ka_{1i})-\nonumber \\ \sum_{i=1,{i\ne
j}}^{N}\frac{B_{i}}{\sinh Ka_{0j} \sinh Ka_{0i}}(\cosh Ka_{0j}\cosh Ka_{0i}-\cosh Ka_{ji})=0,
\end{align*}

\begin{equation}\label{hyperbolicvolnbis}
\operatorname{Vol}(A_{1},A_{2},\cdots,A_{0}\cdots, A_{N},A_{N+1})<\lambda \operatorname{Vol}(A_{N+1}^{\circ}A_{1}^{\circ},A_{2}^{\circ},\cdots A_{N}^{\circ})+m.
\end{equation}

for some $\lambda, m>0.$

\end{theorem}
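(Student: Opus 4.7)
The proof naturally splits into two independent tasks: reproducing the variational system and establishing the new volume bound \eqref{hyperbolicvolnbis}.

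For the variational equations, I would repeat verbatim the argument used for the previous incarnation of Theorem~\ref{floatmultitreehn}. One differentiates the weighted length functional $f(a_{01},\ldots,a_{0N})=\sum_{i=1}^{N}B_{i}a_{0i}$ along each arc $a_{0j}$ using the first variation formula $d a_{0i}/d a_{0j}=\cos\angle A_{i}A_{0}A_{j}$, which produces $N$ balance equations $\sum_{i}B_{i}\cos\alpha_{j0i}=0$. Subtracting the $j$-th from the first and replacing each cosine by the hyperbolic law of cosines applied to $\triangle A_{i}A_{0}A_{j}$ recovers the displayed equation. Since this duplicates the earlier derivation, no new work is required for this portion.

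The substantive content is the volume inequality. The plan is to bridge Milnor's upper bound \eqref{hyperbolicvoln} with the orthosimplex series \eqref{volorthosimplexHn} using the Dekster--Wilker hypothesis $s\le a_{ij}\le \ell$, $(Ks,K\ell)\in DW_{\mathbb{H}_{K}^{N}}(K\ell,K s)$. First, by the additivity $A_{1}A_{2}\cdots A_{N+1}=\bigcup A_{1}A_{2}\cdots A_{0}\cdots A_{N}$ of interior decomposition and Lemma~\ref{regmaxvol1h} (Haagerup--Munkholm), the volume on the left of \eqref{hyperbolicvolnbis} is bounded by the volume of a regular ideal $N$-simplex determined by $\ell$; the Milnor series on the right of \eqref{hyperbolicvoln} realises this upper bound. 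On the other hand, the orthosimplex $A_{N+1}^{\circ}A_{1}^{\circ}\cdots A_{N}^{\circ}$ lying in the same Frechet class has edge lengths in the same DW domain, so the variables $\tanh K a_{(N+1)1}^{\circ},\ldots,\tanh K a_{(N-1)N}^{\circ}$ appearing in \eqref{volorthosimplexHn} are bounded below by a strictly positive quantity $\tanh(Ks/2)$ (up to elementary renormalisation), which keeps each multi-index monomial in the orthosimplex series uniformly positive.

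I would then extract $\lambda$ and $m$ by a term-by-term comparison of the two convergent series. Since $a_{ij}^{\circ}, a_{ij}\in[s,\ell]$, every multi-index coefficient of the Milnor regular series is comparable to the corresponding coefficient of the orthosimplex series \eqref{volorthosimplexHn} up to a ratio bounded by a constant depending only on $\ell,s,N$. Taking $\lambda$ to be any upper bound for the supremum of these coefficient ratios yields the desired inequality modulo finitely many low-order residual terms, which are then absorbed into the additive constant $m>0$. The main obstacle I anticipate is this term-by-term comparison: the two series have very different combinatorial structure (the orthosimplex expansion is an asymmetric multi-index product in the $\tanh K a$-variables, while the regular ideal series is symmetric in the single edge $\ell$ with combinatorial weights $A_{N,k}$). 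To overcome this, I would either reorganise the regular series by symmetrisation over the factors $\tanh(Ka_{ij}^{\circ})$, or, more robustly, apply a dominated-comparison argument using the uniform tail bounds guaranteed by the DW domain. Once these uniform bounds are in place, the choice of $\lambda$ and $m$ becomes routine and the theorem follows.
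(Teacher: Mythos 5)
Your overall strategy is sound and matches what the paper intends, but you have manufactured a difficulty where none exists. Note first that the paper supplies no proof environment for this statement at all: it is presented as a ``reformulation'' of the immediately preceding version of the theorem, obtained by replacing the volume bound (\ref{hyperbolicvoln}) with one phrased via Milnor's orthosimplex series (\ref{volorthosimplexHn}). Your treatment of the variational equations (first variation of each arc, subtraction, hyperbolic law of cosines) reproduces the paper's argument for that earlier version and is fine. For the volume inequality, however, the statement only asserts the existence of \emph{some} $\lambda, m>0$, so the term-by-term comparison of the two series --- which you single out as the main obstacle --- is entirely unnecessary. By Lemma~\ref{regmaxvol1h} (Haagerup--Munkholm) together with Milnor's series, the left-hand side of (\ref{hyperbolicvolnbis}) is bounded above by the finite volume $V_{\max}$ of the regular ideal $N$-simplex, while the orthosimplex $A_{N+1}^{\circ}A_{1}^{\circ}\cdots A_{N}^{\circ}$ is a nondegenerate simplex with edge lengths in $[s,\ell]$, hence has strictly positive volume $V_{\circ}$ computed by (\ref{volorthosimplexHn}). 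Any choice with $\lambda V_{\circ}+m\ge V_{\max}$ (for instance $\lambda=V_{\max}/V_{\circ}$ and any $m>0$, or arbitrary $\lambda>0$ with $m=V_{\max}$) gives the inequality. Your proposed symmetrisation or dominated-comparison machinery would eventually succeed, but it answers a stronger question (coefficientwise domination of one series by the other, across incompatible index sets) than the theorem asks; and even if you want $\lambda$ and $m$ to carry quantitative content depending only on $N$, $K$, $s$, $\ell$, the two-line bound above already delivers that, since $V_{\max}$ is universal and $V_{\circ}$ is bounded below on the Dekster--Wilker domain exactly by the positivity of the $\tanh$-variables you describe.
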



Given $\{a_{01},a_{02},\cdots, a_{0N}\}$ a weighted (floating) Fermat-Torricelli tree with respect to the hyperbolic simplex $A_{1}A_{2}\cdots A_{N}$ in $\mathbb{H}_{K}^{N}$ with unknown constant curvature $K,$ which belongs to the hyperbolic weighted Fermat-Frechet multitree solution, we may compute the curvature $K$ and the radius $R$ of the sphere of the Klein model, by using the hyperbolic Caley-Menger determinant.
The hyperbolic Caley Menger determinant for $\{A_{0},A_{1},\cdots,A_{N}\}$ is given by:
\begin{multline*}
\det (A_{0},A_{1},A_{2},\cdots, A_{N})=\\
\begin{vmatrix}
  1 & \cosh Ka_{01} & \cosh Ka_{02} & \cdots  & \cosh Ka_{0N} \\
 \cosh Ka_{10}  & 1 & \cosh Ka_{12} & \cdots   & \cosh Ka_{1N} \\
  \cosh Ka_{20} & \cosh Ka_{21} & 1 & \cdots   & \cosh Ka_{2N} \\
  . & . & . & . & . & . \\
  \cosh Ka_{N0} & \cosh Ka_{N1} & \cosh Ka_{N2} & \cdots  & 1
\end{vmatrix}.
\end{multline*}

Thus, by setting $\det (A_{0},A_{1},A_{2},\cdots, A_{N})=0,$ we may derive an estimate of $K.$
In \cite{TerenceTao:19}, Terence Tao used a spherical Caley-Menger determinant to obtain estimate of the radius of the Earth.

\begin{proposition}\label{hyperpro1}
If a weighted Fermat-Frechet multitree $\{a_{01}, a_{02},\cdots, a_{0N}\}$ is given in $\mathbb{H}_{K}^{N}$ then we get the same sphere on the Klein model.
\end{proposition}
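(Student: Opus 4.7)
The plan is to identify the hyperbolic Cayley-Menger determinant as the degeneracy equation that each individual weighted Fermat-Torricelli tree in the multitree must satisfy, and then to argue that this equation determines a unique curvature $K$ shared by all trees in the multitree. The key observation is that the Fermat point $A_0$ of any incongruent simplex $A_1\cdots A_N$ is forced by the first-order optimality conditions (the system in Theorem~\ref{floatmultitreehn}) to lie in the totally geodesic $(N-1)$-dimensional hyperbolic subspace spanned by the boundary vertices. Consequently, the configuration $\{A_0,A_1,\ldots,A_N\}$ is a set of $N+1$ points contained in a copy of $\mathbb{H}_K^{N-1}\subset\mathbb{H}_K^N$, hence geodesically dependent.

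From this dependence, the hyperbolic Cayley-Menger determinant $\det(A_0,A_1,\ldots,A_N)$, whose entries are $\cosh(Ka_{ij})$ off the diagonal and $1$ on it, must vanish. Expanding this determinant produces a single transcendental equation in the unknown $K$, with coefficients built from the admissible edge lengths $\{a_{ij}\}$ and the branch lengths $\{a_{0i}\}$ of the particular Fermat tree. I then invoke the monotonicity of $K\mapsto \cosh(Kt)$ for fixed $t>0$, restricted to the Dekster-Wilker realizability domain $DW_{\mathbb{H}_K^N}(K\ell,Ks)$ of Lemma~\ref{DW1h}, to conclude that this equation admits a unique admissible negative root $K_0$.

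Since the weighted Fermat-Frechet multitree is by definition realized in a single fixed ambient space $\mathbb{H}_K^N$, every one of its component Fermat-Torricelli trees must satisfy its own Cayley-Menger vanishing equation with the same curvature $K$. Although the individual equations differ (the branch lengths $a_{0i}$ vary from tree to tree and the boundary edges $a_{ij}$ are permuted according to the combinatorial type of the incongruent simplex), each recovers the same root $K_0$, and hence the same radius $R=1/\sqrt{-K_0}$ of the sphere bounding the Klein model. This is the asserted conclusion.

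The main obstacle I expect is the uniqueness claim in the middle paragraph: because the hyperbolic Cayley-Menger determinant is a polynomial of high degree in the entries $\cosh(Ka_{ij})$, the corresponding equation in $K$ could a priori admit extraneous roots. Ruling them out requires a careful sign and monotonicity analysis of the cofactor expansion, exploiting that every off-diagonal entry $\cosh(Ka_{ij})\ge 1$ is strictly increasing in $-K$ on positive edge lengths constrained to the Dekster-Wilker admissible window, together with the boundedness of the Fermat branches supplied by Proposition~\ref{boundbranchtrees} in its hyperbolic analogue.
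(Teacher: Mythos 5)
Your proposal follows essentially the same route as the paper: the paper's proof consists precisely of imposing $\det(A_{0},A_{1},\ldots,A_{N})=0$ for each component tree of the multitree and solving for $K$ (hence for the radius of the sphere in the Klein model). Your added justification for why the determinant vanishes (geodesic dependence of the $N+1$ points, given that $A_{0}$ lies in the totally geodesic span of the boundary vertices) and your explicit flagging of the uniqueness-of-root issue in $K$ go beyond the paper's one-line argument, which silently assumes a unique admissible root, but they do not constitute a different method.
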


\begin{proof}
Solving $\det (A_{0},A_{1},A_{2},\cdots, A_{N})=0,$ for each weighted Fermat-Torricelli tree that consists the weighted Fermat-Torricelli multitree yields an estimate for the constant negative curvature $K$ and the corresponding radius $R$  in the Klein model.
\end{proof}

C. The weighted Fermat-Frechet problem in the spherical space $\mathbb{S}_{K}^{N},$ for $K>0$ and $R=\frac{1}{\sqrt{K}}.$


Dekster and Wilker (\cite[(3.1),(3.6),(1.3),(1.5)]{DeksterWilker:91a},\cite[pp.~9--11]{DeksterWilker:92}) introduced a class of incongruent spherical simplexes in terms of $\ell, s,$ such that: $\ell=\max_{i,j}a_{ij},$ $s=\min_{i,j}a_{ij}$ and
the following notations are used:
\[ \lambda_{N}(\ell )=\] \[ \left\{
\begin{array}{ll}
  \frac{1}{K}\arccos( \sqrt{1+2(1-\frac{2}{N})\sin^{2}K\frac{\ell}{2}} \sqrt{1+2(1-\frac{2}{N+2})\sin ^{2}K\frac{\ell}{2}}) & for\ even\ N\ge 2, \\
  \frac{2}{K} \arcsin(\sqrt{1-\frac{2}{(N+1)}}\sin K\frac{\ell}{2}) & for\ odd\ N\ge 3
\end{array}
\right.
,\]

\[m_{N}(K \ell)=\frac{2}{K}\arcsin K \sqrt {\frac{2+2(N-1)\cos K \ell}{1+(N-2) \cos K \ell}}\sin K \frac{\ell}{2}  \]

\[ \ell_{N}^{\star}=\frac{2}{K}\arcsin K \sqrt{\frac{N+1}{2N}},\]

\[a_{N}=\frac{2}{K}\arcsin \frac{K}{4} \frac{\sqrt{7N-4+\sqrt{N^2+8N}}}{N-1},\]

\[K \ OC: K \ s= K \ell, K \ell \in [0, K \ell_{N}^{\star}], \]

\[K \ CF: K \ s = K \ m_{N}(K \ell), K \ell \in [K \ell_{N}^{\star},K a_{N}], \]

\[K \ OE: K \ s = K \lambda_{N}(K\ \ell), K \ell \in [0,K \ell_{N}^{\star}], \]

\[EF: K s= K \lambda_{N}(K \ell), K\ \ell \in [K \ell_{N}^{\star}, K a_{N}]. \]

\begin{definition}{\cite{DeksterWilker:91a},\cite{DeksterWilker:92}}
The Dekster-Wilker spherical domain\\
$(K\ell, K s)\in DW_{\mathbb{S}_{K}^{N}}(K\ell, K s)$ is a closed domain in $\mathbb{R}^{2}$ bounded by the arcs $K \ OC,$ $K \ CF,$ $K \ OE,$ $K \ EF.$
\end{definition}

\begin{definition}{\cite[p.~333]{Chavel:93}}
 A set $D$ in $\mathbb{S}_{K}^{N},$ is convex if for any $X, Y \in D$ there exists
a geodesic arc $XY$ in $D$ such that $XY$ is the unique minimizer in $\mathbb{S}_{K}^{N},$ connecting $X$ to $Y.$
\end{definition}

\begin{definition}{\cite[p.~335]{Chavel:93}}
For any $X \ in \ \mathbb{S}_{K}^{N},$  the convexity radius is given by:
$Conv B(X,r) = \sup \{\rho : B(X,r) \ is \ convex \ for \ all \ r < \rho ,$
where $B(X,r)$ is a disk with center at $X$ and radius $r.$
\end{definition}

\begin{lemma}{Convexity Radius for the spherical space $\mathbb{S}_{K}^{N},$ \cite[p.~510]{Karcher:77},\cite[Appendix,Lemma~1]{Fletcher:09}}
The convexity radius $Conv B(X,r)$ for each $X \in \mathbb{S}_{K}^{N},$ is $\frac{\pi}{4\sqrt{K}}.$
\end{lemma}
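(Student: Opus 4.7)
The plan is to reduce to Karcher's general lower bound for the convexity radius and then show that the geometry of the open hemisphere forces the injectivity term to be the binding constraint.

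First, I would record Karcher's estimate from \cite{Karcher:77}: if $M$ is a complete Riemannian manifold (or a smooth open submanifold on which one works with the intrinsic metric) with upper sectional curvature bound $\Delta$, then for every $X \in M$,
\[
\operatorname{Conv} B(X,r) \;\ge\; \min\!\left( \frac{\pi}{2\sqrt{\Delta}},\; \frac{\operatorname{inj}(X)}{2}\right),
\]
where $\operatorname{inj}(X)$ is the injectivity radius at $X$. The two ingredients behind this lower bound are (i) strong convexity of the squared-distance function $d(X,\cdot)^2$ on $B(X,\pi/(2\sqrt{\Delta}))$, which follows from the standard Hessian comparison in constant-upper-curvature spaces, and (ii) uniqueness of minimizing geodesics whenever every pair of points in the ball is within the injectivity domain, forcing the diameter $2r$ of the ball to be at most $\operatorname{inj}(X)$.

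Next I would specialize to $\mathbb{S}_K^N$. Since the sectional curvature is the constant $K>0$, we may take $\Delta = K$, so the curvature term equals $\pi/(2\sqrt{K})$. Because $\mathbb{S}_K^N$ is modeled on the \emph{open} hemisphere of radius $1/\sqrt{K}$, the geodesic domain from any interior point $X$ extends only as far as the bounding equator; in particular at the antipode-free pole one has $\operatorname{inj}(X)=\pi/(2\sqrt{K})$, so $\operatorname{inj}(X)/2 = \pi/(4\sqrt{K})$. Comparing the two terms of Karcher's bound,
\[
\frac{\pi}{4\sqrt{K}} \;=\; \frac{\operatorname{inj}(X)}{2} \;<\; \frac{\pi}{2\sqrt{K}} \;=\; \frac{\pi}{2\sqrt{\Delta}},
\]
so the injectivity half dominates and the lower bound becomes exactly $\pi/(4\sqrt{K})$. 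This is the step that exploits the hemisphere cutoff and distinguishes the value $\pi/(4\sqrt{K})$ from the value $\pi/(2\sqrt{K})$ that would arise on the full sphere of radius $1/\sqrt{K}$.

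To complete the proof, I would show sharpness. For $r$ slightly greater than $\pi/(4\sqrt{K})$, I would exhibit two points $Y, Z \in B(X,r)$ at the same geodesic distance $r$ from $X$ but in nearly opposite radial directions; by the spherical law of cosines the unique minimizing geodesic $\gamma(t)$ joining $Y$ and $Z$ passes through $X$ and the parameter interval has length close to $2r > \pi/(2\sqrt{K})$, which either produces a conjugate point (destroying uniqueness) or, when $X$ is close enough to the equator, forces $\gamma$ to exit $\mathbb{S}_K^N$. Either phenomenon rules out convexity of $B(X,r)$, matching the lower bound. The principal obstacle that I expect is handling the dependence of $\operatorname{inj}(X)$ on $X$ as $X$ approaches the equator: one must take the infimum over the hemisphere carefully, or equivalently interpret the statement as the sharp convexity radius at the center of the hemisphere and use Fletcher's formulation in \cite[Appendix, Lemma~1]{Fletcher:09} to propagate it to every interior point. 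Once this uniformization is in place, the stated value $\pi/(4\sqrt{K})$ follows.
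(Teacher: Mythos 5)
The paper gives no proof of this lemma at all: it is imported verbatim from Karcher and from Fletcher--Venkatasubramanian--Joshi, so there is no in-paper argument to compare yours against, and your proposal must stand on its own. It does not, for two reasons. First, you misidentify the source of the constant $\frac{\pi}{4\sqrt{K}}$. In the Karcher/Fletcher bound $r<\tfrac12\min\{\operatorname{inj},\,\pi/(2\sqrt{\Delta})\}$, the quantity $\pi/(2\sqrt{\Delta})$ is a \emph{diameter} threshold: it is the range on which the Hessian comparison $\operatorname{Hess}\,d(y,\cdot)\ \ge\ \sqrt{\Delta}\,d\cot(\sqrt{\Delta}\,d)\,g$ keeps the (squared) distance between any two points of the ball convex, and the outer factor $\tfrac12$ converts that diameter bound into a radius bound. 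For $\Delta=K$ this gives $\tfrac12\cdot\pi/(2\sqrt{K})=\pi/(4\sqrt{K})$, with the injectivity term $\operatorname{inj}/2=\pi/(2\sqrt{K})$ \emph{not} binding --- the opposite of what you assert. Your route instead halves the ``injectivity radius'' of the open hemisphere at its pole; besides inverting which term binds, it cannot produce a value independent of $X$, since the distance from $X$ to the bounding equator tends to $0$, and your closing appeal to Fletcher's Lemma~1 to ``propagate'' the constant to every interior point is circular, that lemma being precisely the statement under proof.

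Second, the sharpness step fails. On the sphere of curvature $K$ conjugate points occur at distance $\pi/\sqrt{K}$, not $\pi/(2\sqrt{K})$, so a minimizing geodesic of length slightly exceeding $\pi/(2\sqrt{K})$ that passes through $X$ remains the unique minimizer and, having both endpoints at distance $r$ from $X$, stays inside $B(X,r)$. Indeed, with Chavel's definition quoted in Section~2 (existence in $D$ of the unique minimizer), $B(X,r)$ is convex for \emph{every} $r<\pi/(2\sqrt{K})$ and every $X$ in the open hemisphere, being the intersection of two convex sets (a spherical ball of radius less than $\pi/(2\sqrt{K})$ and the open hemisphere itself). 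Hence no counterexample of the kind you describe exists at radius just above $\pi/(4\sqrt{K})$: that value is the radius guaranteeing the stronger property Fletcher actually needs (convexity of the distance function between pairs of points of the ball, as required for the geometric median), not a sharp geodesic-convexity radius, and the matching upper bound you propose cannot be established within the definition the paper adopts.
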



\begin{theorem}  [The weighted Fermat-Frechet problem in $\mathbb{S}_{K}^{N}$]\label{Snfrechet}

The weighted Fermat-Frechet solution for a given $\frac{N(N+1)}{2}-$tuple $ a_{ij},$ such that $s\le a_{ij}\le \ell,$ for $(K\ell,K s) \in DW_{\mathbb{S}_{K}^{N}}(K \ell, K s)$ and $\ell \le \frac{\pi}{4\sqrt{K}}$ consists of a maximum number of $\frac{\frac{1}{2}N(N+1)!}{(N+1)!}$ weighted Fermat trees, which belong to one of the following two cases:

(I) If  for each index $k\in \{1,2,3,\cdots, N\},$
\begin{equation}\label{ineqq1s}
\sum_{i=1, i<j}^{N} B_{i}B_{j}\frac{-\cos Ka_{ik}\cos Ka_{jk}+\cos K a_{ij}}{\sin Ka_{ik}\sin Ka_{jk}}>\frac{B_{k}^2-\sum_{i=1,i<j}^{N}B_{i}^2}{2},
\end{equation}
we obtain the weighted floating Fermat-Torricelli tree $\{a_{01},a_{02},\cdots ,a_{0N}\}.$

(II) If there is an index $k\in {1,2,\cdots,N},$ such that:
\begin{equation}\label{ineqq2s}
\sum_{i=1, i<j}^{N} B_{i}B_{j}\frac{-\cos Ka_{ik}\cos Ka_{jk}+\cos Ka_{ij}}{\sin Ka_{ik}\sin Ka_{jk}}\le \frac{B_{k}^2-\sum_{i=1,i<j}^{N}B_{i}^2}{2},
\end{equation}
we obtain the weighted absorbing Fermat-Torricelli tree $\{a_{k1},a_{k2},\cdots ,a_{kN}\}.$

\end{theorem}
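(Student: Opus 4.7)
The plan is to mirror the proofs of Theorems~\ref{theorrn1} and~\ref{thhn1} in the positive curvature setting, the new ingredient being the convexity-radius bound $\ell\le\frac{\pi}{4\sqrt{K}}$, which plays the role that convexity of the ambient space played automatically in the Euclidean and hyperbolic cases. First I would invoke the Dekster--Wilker admissibility condition $(K\ell,Ks)\in DW_{\mathbb{S}_K^N}(K\ell,Ks)$ to realize the prescribed $\frac{N(N+1)}{2}$-tuple $a_{ij}$ as the edge-length data of at most $\frac{\frac{1}{2}N(N+1)!}{(N+1)!}$ mutually incongruent spherical $N$-simplexes sitting in an open hemisphere of $\mathbb{S}_K^N$; the bounding arcs $OC$, $CF$, $OE$, $EF$ together with $\ell\le a_N$ ensure that every such simplex fits inside a single convex geodesic ball of radius at most $\frac{\pi}{4\sqrt{K}}$.

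Next, for each incongruent simplex $A_1A_2\cdots A_{N+1}$ I would consider the weighted objective
\[
f(X)=\sum_{i=1}^{N+1}B_i\,d_{\mathbb{S}_K^N}(X,A_i)
\]
restricted to this convex ball. Because $\ell$ is strictly bounded by the convexity radius, each distance function $d(\cdot,A_i)$ is continuous and strictly convex along every minimizing geodesic lying in the ball, so $f$ is strictly convex; compactness of the closed ball then produces a unique minimizer $A_0$, which is either an interior (floating) point or one of the vertices (absorbing case). I would then characterize the two alternatives via the weighted Kupitz--Martini directional-derivative test: vanishing of the first variation at an interior $A_0$ gives $\sum_{i}B_i u_{0i}=0$ for the unit tangent vectors $u_{0i}$ at $A_0$ pointing toward the $A_i$, while absorption at a vertex $A_k$ is equivalent to the one-sided directional derivative of $f$ at $A_k$ being non-negative in every admissible direction, which after squaring becomes $\bigl\|\sum_{i\ne k}B_i u_{ki}\bigr\|^{2}\le B_k^{2}$.

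The passage from these inner-product inequalities to \eqref{ineqq1s} and \eqref{ineqq2s} is then a direct substitution: expanding the squared norm produces the cosines $\langle u_{ki},u_{kj}\rangle=\cos\angle A_iA_kA_j$, and the spherical law of cosines in $\triangle A_iA_kA_j$,
\[
\cos\angle A_iA_kA_j=\frac{\cos Ka_{ij}-\cos Ka_{ik}\cos Ka_{jk}}{\sin Ka_{ik}\sin Ka_{jk}},
\]
turns the condition into exactly the quotient appearing in the theorem; strict inequality for every $k$ yields case (I), while failure for some $k$ yields case (II), and enumerating the $\frac{\frac{1}{2}N(N+1)!}{(N+1)!}$ incongruent simplexes gives the stated count of trees. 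The main obstacle I expect is not the algebraic transcription, which is routine once the Euclidean template is in hand, but the geometric bookkeeping that the unique minimizer $A_0$ supplied by compactness genuinely lies inside the convexity ball where the first-order calculus is valid; this is where the hypothesis $\ell\le\frac{\pi}{4\sqrt{K}}$ is essential, ruling out the positive-curvature pathologies (multiple minimizers, antipodal conjugation) that would otherwise inflate or destroy the count predicted by Dekster--Wilker.
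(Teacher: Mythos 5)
Your proposal is correct and follows essentially the same route as the paper: existence and uniqueness of $A_{0}$ via compactness and convexity of the spherical distance functions (the paper cites Karcher for this, with the hemisphere/convexity-radius restriction playing exactly the role you describe), characterization of the floating versus absorbing cases via the weighted Kupitz--Martini norm condition $\bigl\|\sum_{i\ne k}B_{i}u_{ki}\bigr\|^{2}\le B_{k}^{2}$ on unit tangent vectors, and substitution of the spherical law of cosines to obtain \eqref{ineqq1s} and \eqref{ineqq2s}. Your write-up is in fact more explicit than the paper's sketch about why $\ell\le\frac{\pi}{4\sqrt{K}}$ is needed, but the underlying argument is the same.
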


\begin{proof}
The existence and uniqueness of the weighted Fermat-Torricelli point $A_{0}$ in $\mathbb{S}_{K}^{N}$ is given by compactness and the convexity of the distance function $a_{0i}$ in \cite[(1.2.4)]{Karcher:77}, by selecting incongruent $N-$simplexes  $A_{1}A_{2}\cdots A_{N}$ in $B_{A_{i},\frac{\pi}{\sqrt{K}}}.$  Therefore, the maximum number of weighted Fermat trees for $s\le a_{ij}\le \ell:$ $(Ks,K\ell)\in DW_{\mathbb{S}_{K}^{N}}(K\ell,K s)$ is
$\frac{\frac{1}{2}N(N+1)!}{(N+1)!}.$

The gradient of the objective function $f(a_{10},a_{20},\cdots, a_{N0})=\sum_{i=1}^{N}B_{i}a_{0i}$ gives a weighted sum of outward pointing unit tangent vectors at $A_{0}$ $\nabla f(a_{10},a_{20},\cdots, a_{N0})=\sum_{i=1}^{N}B_{i}\exp_{A_{0}}^{-1}\frac{\vec{X}(A_{0},A_{i})}{a_{0i}}.$ Taking the norm of the inner product $\nabla f(a_{10},a_{20},\cdots, a_{N0}) \dot \exp_{A_{0}}^{-1}\frac{\vec{X}(A_{0},A_{i})}{a_{0i}}$ yields an extension of weighted norm conditions of Kupitz-Martini for the spherical case. By substituting the spherical law of cosines in in $\triangle A_{i}A_{}jA_{k}$ in the weighted norm inequalities of unit tangent vectors, we get (\ref{ineqq1s}) and (\ref{ineqq2s}).
\end{proof}

In \cite{Boroczsky:87}, Boroczsky proved an isoperimetric inequality of spherical simplexes in $\mathbb{S}_{K}^{N}.$
\begin{lemma}{Isoperimetric inequality of a spherical regular simplex in $\mathbb{S}_{K}^{N},$ \cite{Boroczsky:87}} \label{regmaxvol1s}
A spherical $N$ simplex is of maximal volume if it is regular.
\end{lemma}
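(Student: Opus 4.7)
The plan is to prove the spherical isoperimetric statement by combining a compactness argument, the Schläfli differential formula for spherical volume, and a symmetrization step that rules out non-regular critical configurations. Throughout, I interpret the statement in the way it is used below (in analogy with Hadwiger's Euclidean counterpart and Proposition~\ref{boundbranchtrees}): among spherical $N$-simplices with a fixed sum of edge lengths $\sum_{i<j} a_{ij}=C$, all lying inside the convexity ball where the edges $a_{ij}\le \frac{\pi}{\sqrt{K}}$ are well-defined, the regular simplex maximizes the volume.

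First, I would show that the admissible set is compact in the moduli space of spherical $N$-simplices modulo isometry. The conditions $(K\ell,Ks)\in DW_{\mathbb{S}_{K}^{N}}(K\ell,Ks)$ together with the fixed perimeter $C$ force each $a_{ij}$ to lie in a closed interval bounded away from $\pi/\sqrt{K}$; hence the Gram matrix of a simplex in the admissible set ranges over a closed bounded subset of the positive-definite cone. Continuity of the volume functional then yields the existence of a maximizer. This reduces the problem to identifying the interior critical points of the constrained functional.

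Next, I would apply the Schläfli differential formula for a spherical $N$-simplex,
\[
(N-1)\,dV \;=\; \sum_{F}\operatorname{Vol}_{N-2}(F)\, d\theta_{F},
\]
where $F$ ranges over the $(N-2)$-faces and $\theta_{F}$ is the interior dihedral angle at $F$. At an interior maximum, Lagrange multipliers give a scalar $\mu$ with
\[
\sum_{F}\operatorname{Vol}_{N-2}(F)\, d\theta_{F} \;=\; \mu\, d\!\left(\sum_{i<j}a_{ij}\right)
\]
as linear forms on the tangent space to moduli. Expanding both sides in terms of an independent family of edge-length variations, using the Gram-matrix reciprocity between $\partial \theta_{F}/\partial a_{ij}$ and $\partial a_{ij}/\partial \theta_{F}$, yields an algebraic system whose $S_{N+1}$-invariance singles out the regular simplex as a natural solution.

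The main obstacle will be proving uniqueness of the critical point; the system produced by the Schläfli formula does not obviously admit only the regular solution. To overcome this, I would employ a symmetrization (two-edge averaging) argument: given any non-regular critical simplex, pick two unequal edges $a_{ij}\ne a_{kl}$ and consider the one-parameter deformation that slides them toward their common mean while freezing all other edges (adjusting interior angles so the simplex remains admissible). A first-variation computation via the Schläfli formula shows this deformation preserves $\sum a_{ij}$ and strictly increases $V$, unless the dihedral-angle configuration at the perturbed edges was already symmetric. Iterating this averaging across all edge pairs produces a monotone sequence converging to a fully edge-transitive spherical simplex, which is necessarily regular. A degeneration check at the boundary of the admissible set (where some $a_{ij}$ vanishes or the Gram matrix becomes singular) shows the volume tends to zero, so the maximum is attained in the interior and must coincide with the regular simplex, completing the proof.
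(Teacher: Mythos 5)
The paper offers no proof of this lemma at all: it is quoted directly from B\"{o}r\"{o}czky \cite{Boroczsky:87} as an external result, so there is no internal argument to compare yours against. Judged on its own terms, your proposal has a genuine gap at the decisive step, the two-edge averaging argument that is supposed to rule out non-regular critical points. At an interior critical point of the constrained problem, \emph{every} first variation tangent to the constraint surface $\sum_{i<j}a_{ij}=C$ vanishes by definition of criticality; in particular, the averaging direction you describe (slide $a_{ij}$ and $a_{kl}$ toward their mean, freeze the rest) is tangent to that surface, so its first variation of $V$ is zero at any critical simplex, regular or not. Hence ``a first-variation computation via the Schl\"afli formula shows this deformation \dots strictly increases $V$'' cannot be correct as stated: a strict increase along a curve through a critical point requires second-order information, i.e., some concavity of the volume as a function of the edge lengths restricted to the constraint. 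Establishing that concavity (or any substitute monotonicity) is precisely the hard content of extremal-volume theorems of this type, and it is known to be delicate -- spherical and hyperbolic volume are not concave in the natural edge-length or angle parameters without further restrictions. Without that input, your Schl\"afli--Lagrange computation only certifies that the regular simplex is \emph{a} critical point (which follows from $S_{N+1}$-symmetry alone), not that it is the unique maximizer.

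Two smaller points. First, your compactness step should be phrased on the closure of the positive-definite Gram cone (the cone itself is open); you do handle the degenerate boundary at the end, so this is repairable. Second, be aware that the theorem B\"{o}r\"{o}czky actually proves constrains the simplex to lie in a prescribed ball (cap) of $\mathbb{S}_{K}^{N}$ rather than fixing $\sum_{i<j}a_{ij}$; the fixed-perimeter formulation you chose is the one the paper implicitly needs for Theorem~\ref{floatmultitreesn}, but it is not literally the cited result, so even the statement being proved would need to be reconciled with the source before the proof strategy is settled.
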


\begin{theorem}[The spherical Fermat-Torricelli Frechet solution]\label{floatmultitreesn}
The following equations depending on the variable spherical edge lengths $a_{01},a_{02},\cdots , a_{0N}$ and the spherical edge lengths $\{a_{ij}\}$ provide a necessary condition for the location of the weighted floating Fermat-Torricelli trees $\{a_{01},a_{02},\cdots ,a_{0N}\},$ which belong to the weighted Fermat-Frechet multitree solution:
\begin{align}\label{construct312ns}
\sum_{i=2}^{N}\frac{B_{i}}{\sin K a_{01}\sin K a_{0i}}(-\cos Ka_{01}\cos Ka_{0i}+ \cos Ka_{1i})-\nonumber \\ \sum_{i=1,{i\ne
j}}^{N}\frac{B_{i}}{\sin Ka_{0j} \sin Ka_{0i}}(-\cos Ka_{0j}\cos Ka_{0i}+\cos Ka_{ji})=0,
\end{align}

\begin{equation}\label{sphericalvols}
\operatorname{Vol}(A_{1},A_{2},\cdots,A_{0}\cdots, A_{N}))<\operatorname{Vol}(C_{1},C_{2},\cdots, C_{N}))
\end{equation}

where $C_{1},C_{2},\cdots C_{N}$ is a regular spherical $N$-simplex with edge length $c_{ij}=\frac{\sum_{i,j}a_{ij}}{\frac{(N-1)N}{2}}.$

\end{theorem}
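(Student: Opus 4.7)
The strategy mirrors the hyperbolic case of Theorem~\ref{floatmultitreehn}, with two independent pieces: (i) derive the stationarity system (\ref{construct312ns}) from a first variation argument coupled with the spherical law of cosines, and (ii) establish the volume bound (\ref{sphericalvols}) from Boroczsky's isoperimetric inequality (Lemma~\ref{regmaxvol1s}). The floating case hypothesis is the one already secured by Theorem~\ref{Snfrechet}: inequality (\ref{ineqq1s}) places the weighted Fermat-Torricelli point $A_{0}$ strictly in the interior of the $(N-1)$-simplex $A_{1}A_{2}\cdots A_{N}$, inside the convexity ball of radius $\frac{\pi}{4\sqrt{K}}$, so that each geodesic arc $A_{0}A_{i}$ is the unique minimizer and differentiation along arc length is legitimate.

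For the first step I would differentiate the objective $f(a_{01},a_{02},\ldots,a_{0N})=\sum_{i=1}^{N}B_{i}a_{0i}$ using the first variation formula for geodesic arcs in $\mathbb{S}_{K}^{N}$, namely $\frac{d a_{0i}}{d a_{0j}}=\cos\angle A_{i}A_{0}A_{j}$. This yields the $N$ vanishing-gradient conditions
\begin{equation*}
\sum_{i=1}^{N}B_{i}\cos\angle A_{i}A_{0}A_{j}=0, \qquad j=1,\ldots,N.
\end{equation*}
Subtracting the $j=1$ relation from the $j$-th relation for each $j=2,\ldots,N$ isolates the differences involving $A_{0}A_{1}$ versus $A_{0}A_{j}$. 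I would then substitute the spherical law of cosines applied in each triangle $\triangle A_{i}A_{0}A_{j}$,
\begin{equation*}
\cos\angle A_{i}A_{0}A_{j}=\frac{\cos K a_{ij}-\cos K a_{0i}\cos K a_{0j}}{\sin K a_{0i}\sin K a_{0j}},
\end{equation*}
into each of the differences and clear denominators. Routine rearrangement then produces exactly system (\ref{construct312ns}). This step parallels the hyperbolic derivation verbatim, replacing $\cosh K, \sinh K$ with $\cos K, \sin K$ and flipping the sign of the cosine-of-side term, as dictated by the sphere-hyperbolic duality.

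For the volume inequality I would apply Boroczsky's Lemma~\ref{regmaxvol1s}: for fixed total edge length, a spherical $N$-simplex attains maximal volume when it is regular. I would fix $c=\tfrac{\sum_{i,j}a_{ij}}{N(N-1)/2}$ as the arithmetic mean of the spherical edge lengths of the boundary simplex $A_{1}A_{2}\cdots A_{N}$, and let $C_{1}C_{2}\cdots C_{N}$ be the regular spherical simplex with all edge lengths equal to $c$. Each subsimplex $A_{1}\cdots A_{0}\cdots A_{N}$ obtained by substituting $A_{0}$ for one vertex has edges of total length not exceeding that of $C_{1}C_{2}\cdots C_{N}$ (since $A_{0}$ lies within the convexity radius and is interior to the original simplex, by Proposition~\ref{boundbranchtrees} adapted to the spherical setting), and Lemma~\ref{regmaxvol1s} then delivers (\ref{sphericalvols}).

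\textbf{Main obstacle.} The principal technical concern is verifying that all geodesic triangles and subsimplexes used in the variational computation stay inside the convexity ball $B(A_{i},\tfrac{\pi}{4\sqrt{K}})$, so that the first variation formula, the spherical cosine law, and the comparison to a regular simplex are simultaneously valid. The hypothesis $(K\ell,Ks)\in DW_{\mathbb{S}_{K}^{N}}(K\ell,Ks)$ together with $\ell\le\frac{\pi}{4\sqrt{K}}$ is exactly what is needed here: the Dekster-Wilker region prevents any subsimplex from degenerating (so the law of cosines is applicable with strictly positive $\sin Ka_{0i}$), while the convexity radius bound ensures that no branch of the Fermat tree crosses the cut locus of any vertex. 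Handling the boundary subtlety in Boroczsky's inequality (which requires regular simplexes that also fit inside the hemisphere) is the only place that demands care beyond a direct translation of the hyperbolic proof.
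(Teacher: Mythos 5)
Your derivation of the stationarity system (\ref{construct312ns}) is exactly the paper's argument: differentiate $f=\sum_{i}B_{i}a_{0i}$ along arc length, use the first variation formula $\frac{da_{0i}}{da_{0j}}=\cos\angle A_{i}A_{0}A_{j}$ to get the $N$ vanishing-gradient relations, subtract, and substitute the spherical law of cosines in $\triangle A_{i}A_{0}A_{j}$. That half of the proposal matches the paper's proof.

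The volume inequality is where you deviate, and your version has a gap. The paper proves (\ref{sphericalvols}) by the chain $\operatorname{Vol}(A_{1},\ldots,A_{0},\ldots,A_{N})<\operatorname{Vol}(A_{1},\ldots,A_{N})<\operatorname{Vol}(C_{1},\ldots,C_{N})$: the first inequality is pure containment (since $A_{0}$ is interior, the subsimplex obtained by swapping $A_{j}$ for $A_{0}$ sits inside the boundary simplex), and only the second invokes Lemma~\ref{regmaxvol1s}, applied to the boundary simplex whose mean edge length defines $c$. You instead try to apply Boroczsky's inequality directly to the subsimplex, which forces you to claim that its total edge length does not exceed that of $C_{1}\cdots C_{N}$, i.e.\ that $\sum_{i\ne j}a_{0i}\le\sum_{i\ne j}a_{ji}$. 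That does not follow from what you cite: $A_{0}$ minimizes the \emph{weighted} sum $\sum_{i}B_{i}a_{0i}$, not the unweighted one, and Proposition~\ref{boundbranchtrees} only bounds each individual branch by a circumradius, which says nothing about this perimeter comparison; moreover the lemma as stated does not even fix the normalization (fixed total edge length) under which regularity maximizes volume. The containment step is both simpler and correct; replace your edge-length comparison with it and the remainder of your volume argument coincides with the paper's.
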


\begin{proof}
By differentiating the objective function $f(a_{01},a_{02},\cdots a_{0N})=\sum_{i=1}^{N}B_{i}$ with respect to arc length and by using the first variation formula of a variable arc length $a_{0i}$ with respect to arc length $a_{01}$, and with respect to $a_{0j}$ and by subtracting the two derived relations and then substituting $\cos\angle A_{i}A_{0}A_{j}$ by the  spherical law of cosines in $\triangle A_{i}A_{0}A_{j},$ we get (\ref{construct312ns}). Taking into account Lemma~\ref{regmaxvol1s}, we obtain :
\[\operatorname{Vol}(A_{1},A_{2},\cdots,A_{0}\cdots, A_{N}))<\operatorname{Vol}(A_{1},A_{2},\cdots, A_{N}))< \]\[\operatorname{Vol}(C_{1},C_{2},\cdots,\cdots, C_{N})),\] which gives (\ref{sphericalvols}).

\end{proof}


Given $\{a_{01},a_{02},\cdots, a_{0N}\}$ a weighted (floating) Fermat-Torricelli tree with respect to the spherical simplex $A_{1}A_{2}\cdots A_{N}$ in $\mathbb{S}_{K}^{N}$ with unknown constant curvature $K,$ which belongs to the spherical weighted Fermat-Frechet multitree, we may compute the curvature $K$ and the radius $R$ of $\mathbb{S}_{K}^{N},$ by using the spherical Caley-Menger determinant.
The spherical Caley Menger determinant for $\{A_{0},A_{1},\cdots,A_{N}\}$ is given by:
\begin{multline*}
\det (A_{0},A_{1},A_{2},\cdots, A_{N})=\\
\begin{vmatrix}
  1 & \cos Ka_{01} & \cos Ka_{02} & \cdots  & \cos Ka_{0N} \\
 \cos Ka_{10}  & 1 & \cos Ka_{12} & \cdots   & \cos Ka_{1N} \\
  \cos Ka_{20} & \cos Ka_{21} & 1 & \cdots   & \cos Ka_{2N} \\
  . & . & . & . & . & . \\
  \cos Ka_{N0} & \cos Ka_{N1} & \cos Ka_{N2} & \cdots  & 1
\end{vmatrix}.
\end{multline*}

By setting $\det (A_{0},A_{1},A_{2},\cdots, A_{N})=0,$ we derive proposition~\ref{spherepro1}, which gives an estimate of $K,$ (see in \cite{TerenceTao:19} for $N=3.$)

\begin{proposition}\label{spherepro1}
If a weighted Fermat-Frechet multitree $\{a_{01}, a_{02},\cdots, a_{0N}\}$ is given in $\mathbb{S}_{K}^{N}$ then the intersection of the solution set\\ $\det (A_{0},A_{1},A_{2},\cdots, A_{N})=0,$ for all incongruent spherical simplexes with respect to $K$ gives the same sphere with radius $\frac{1}{\sqrt{K}}.$
\end{proposition}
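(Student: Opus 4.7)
The plan is to mirror the argument sketched for the hyperbolic Proposition~\ref{hyperpro1}, with the spherical Cayley-Menger determinant displayed in the preceding paragraph playing the role of its hyperbolic analogue, and with Schoenberg's spherical rank criterion (\cite[Theorem~2]{Schoenberg:33}) providing the vanishing statement.

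First I would observe that, for any weighted Fermat-Torricelli tree $\{a_{01},\ldots,a_{0N}\}$ belonging to the multitree over a particular boundary simplex $A_{1}\cdots A_{N}$, the $N+2$ points $\{A_{0},A_{1},\ldots,A_{N}\}$ all lie inside $\mathbb{S}_{K}^{N}$. Schoenberg's spherical rank criterion then forces the quadratic form with entries $\cos K a_{ij}$ to have rank at most $N+1$, so the $(N+2)\times(N+2)$ determinant $\det(A_{0},A_{1},\ldots,A_{N})$ displayed in the text vanishes. Treating the $\binom{N+2}{2}$ edge lengths as fixed positive data and the curvature $K$ as the unknown, I denote the solution set of this transcendental equation by $\Sigma^{(\sigma)}$, where $\sigma$ indexes the chosen incongruent simplex.

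Next I would apply Theorem~\ref{Snfrechet} to enumerate the (up to) $\frac{(N(N+1)/2)!}{(N+1)!}$ mutually incongruent spherical simplexes permitted by the Dekster-Wilker domain, together with their associated weighted Fermat-Torricelli trees. Each such pair is simultaneously realized inside the same ambient $\mathbb{S}_{K}^{N}$, so the true curvature $K$ is a root of every equation $\Sigma^{(\sigma)}$, and therefore $K\in\bigcap_{\sigma}\Sigma^{(\sigma)}$. This immediately places every tree of the multitree on a common sphere of radius $1/\sqrt{K}$.

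The main obstacle I expect is the uniqueness part of the intersection statement, namely ruling out a spurious positive root $K'\neq K$ that could belong to every $\Sigma^{(\sigma)}$. I would handle this by combining the existence-uniqueness of each weighted Fermat-Torricelli point in $\mathbb{S}_{K'}^{N}$ (from the proof of Theorem~\ref{Snfrechet}) with Schoenberg's converse rank-characterization: a candidate $K'$ compatible with the entire overdetermined family of incongruent simplex-tree realizations would have to reproduce all $\binom{N+2}{2}$ spherical distances simultaneously, and the rigidity imposed by the many independent determinantal constraints forces $K'=K$. Hence $\bigcap_{\sigma}\Sigma^{(\sigma)}=\{K\}$, and the corresponding sphere of radius $1/\sqrt{K}$ is the unique one carrying the multitree, completing the proof.
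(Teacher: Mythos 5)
Your proposal follows essentially the same route as the paper: for each incongruent simplex carrying a tree of the multitree, set the spherical Cayley--Menger determinant $\det(A_{0},A_{1},\ldots,A_{N})=0$, regard it as an equation in $K$, and intersect the solution sets. The paper's own justification is only the one-sentence remark preceding the proposition (mirroring the proof of Proposition~\ref{hyperpro1} in the hyperbolic case) that solving this equation for each tree ``yields an estimate of $K$''; your version adds the correct observation that the vanishing itself follows from Schoenberg's spherical rank criterion, and you explicitly isolate the uniqueness of the intersection as the real content of the statement. Be aware, though, that your resolution of that uniqueness step --- ``the rigidity imposed by the many independent determinantal constraints forces $K'=K$'' --- is an assertion rather than an argument; the paper does not supply this step either, so you have not fallen below its standard, but if you want a complete proof you would need to show concretely that the transcendental equations $\det(A_{0},A_{1},\ldots,A_{N})(K')=0$ arising from distinct incongruent simplexes cannot share a second common positive root, which neither you nor the paper actually establishes.
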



\section{A controlled Godel-Schoenberg's isometric immersion of weighted Fermat trees in $\mathbb{S}_{\rho_{1}}^{N-2}$ to a weighted simplex in $\mathbb{S}_{\rho_{0}}^{N-1}$}
In this section, we use Godel Schoenberg's techniques, in order to construct isometric immersions of weighted Fermat trees in $\mathbb{S}_{\rho_{1}}^{N-2}$ to a weighted simplex in $\mathbb{S}_{\rho_{0}}^{N-1}.$

In \cite{Godel:33} (see also in \cite[Footnote~5, p.~730]{Schoenberg:33}), Godel considered the edges of a tetrahedron in $\mathbb{R}^{3}$ to be made of flexible strings and placed in the interior of the tetrahedron a small sphere, which was gradually inflated. After time $t,$ this sphere will become tightly packed within the six edges of the tetrahedron. We are interested in the case where the three edges of the tetrahedron correspond to a boundary spherical triangle and the other edges are the three branches of the weighted Fermat (geodesic) tree, which meet at an interior Fermat point.

Let $\triangle A_{1}A_{2}A_{3}$ be a spherical triangle in the open hemisphere $\mathbb{S}_{K}^{2}$ of radius $R=\frac{1}{\sqrt{K}}\equiv \frac{1}{\kappa}.$ We denote by $A_{0}$ the weighted Fermat point inside $\triangle A_{1}A_{2}A_{3}$ $\alpha_{ijk}\equiv \angle A_{i}A_{j}A_{k}$ and by $\{A_{0}A_{1}, A_{0}A_{2}, A_{0}A_{3}\} $ the weighted Fermat tree, such that a positive real number (weight) $B_{r}$ corresponds to each geodesic branch $A_{0}A_{r},$ for $i,j,k=0,1,2,3, r=1,2,3.$
\begin{theorem}[Godel's isometric immersion of a weighted Fermat tree with respect to a boundary spherical triangle to a weighted tetrahedron in $\mathbb{R}^{3}$]\label{Godelr3}
If we select a triad of weights $\{B_{1},B_{2},B_{3}\},$ which satisfy
\begin{eqnarray}\label{calc1}
\lefteqn{1-(\frac{\sin(\kappa
a_{13})}{\sin(\arccos(\frac{B_{2}^2-B_{1}^2-B_{3}^2}{2B_{1}B_{3}}))}\sin(\alpha_{213}-\arccos(\frac{\cos(\kappa
a_{02})-\cos(\kappa a_{01})\cos(\kappa a_{12})}{\sin(\kappa
a_{01})\sin(\kappa a_{12})}))))^2=
{}}\nonumber\\
&&{}(\cos(\kappa a_{01})\cos(\kappa a_{13})+\sin(\kappa
a_{13})\cos(\alpha_{213})\frac{\cos(\kappa a_{02})-\cos(\kappa
a_{01})\cos(\kappa a_{12})}{\sin(\kappa a_{12})}+ {}\nonumber\\
&&{}+\sin(\kappa a_{01})\sin(\kappa a_{13})\sin(\alpha_{213})\cdot
{}\nonumber\\
&&{}\cdot \sqrt{1-(\frac{\cos(\kappa a_{02})-\cos(\kappa
a_{01})\cos(\kappa a_{12})}{\sin(\kappa a_{01})\sin(\kappa
a_{12})})^2}\quad\quad\quad)^2,
\end{eqnarray}

\begin{equation}\label{calc2}
(\cos(\kappa a_{12})-\cos(\kappa a_{01})\cos(\kappa
a_{02}))^2=(\sin(\kappa a_{01})\sin(\kappa
a_{02})\frac{B_{3}^2-B_{1}^2-B_{2}^2}{2B_{1}B_{2}})^2
\end{equation}

such that $\{a_{01},a_{02},a_{03},a_{12},a_{13},a_{23}\}\in DW_{\mathbb{R}^{2}}(\ell, s)$
then the weighted Fermat tree $\{a_{01},a_{02},a_{03}\}$ and the edges of the boundary spherical triangle are isometrically immersed to a weighted tetrahedron in $\mathbb{R}^{3}$ with corresponding edges $\{a_{01},a_{02},a_{03},a_{12},a_{13},a_{23}\}$ and corresponding weights $\{B_{1},B_{2},B_{3},1,1,1\}.$
\end{theorem}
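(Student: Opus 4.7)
The plan is to reverse Godel's inflation procedure: rather than inflating a sphere inside a Euclidean tetrahedron until its edges wrap tightly onto geodesics, I start with the six geodesic lengths $\{a_{01},a_{02},a_{03},a_{12},a_{13},a_{23}\}$ living on $\mathbb{S}_{\kappa}^{2}$ and reinterpret them as candidate straight edge lengths of a tetrahedron in $\mathbb{R}^{3}$. The first step is therefore to invoke Lemma~\ref{DW1} (or equivalently the positivity of the corresponding Cayley--Menger determinant built from these six arc lengths) with the hypothesis $\{a_{ij}\}\in DW_{\mathbb{R}^{2}}(\ell,s)$, in order to guarantee that these six numbers realise a non-degenerate tetrahedron $A_{0}A_{1}A_{2}A_{3}\subset\mathbb{R}^{3}$ with $A_{0}$ interior to the base triangle $A_{1}A_{2}A_{3}$.

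Once the tetrahedron is in place, the task becomes to verify that the apex $A_{0}$ is the weighted Fermat-Torricelli point of $\triangle A_{1}A_{2}A_{3}$ in $\mathbb{R}^{3}$ with the prescribed weights $(B_{1},B_{2},B_{3})$ on the branches and unit weights on the boundary edges. By case~(I) of Theorem~\ref{theorrn1}, this reduces to checking the three Euclidean Fermat angle equalities $\cos\alpha_{i0j}=\frac{B_{k}^{2}-B_{i}^{2}-B_{j}^{2}}{2B_{i}B_{j}}$ for $\{i,j,k\}=\{1,2,3\}$. Using the spherical law of cosines in $\triangle A_{1}A_{0}A_{2}$ one has $\cos\alpha_{102}=\frac{\cos(\kappa a_{12})-\cos(\kappa a_{01})\cos(\kappa a_{02})}{\sin(\kappa a_{01})\sin(\kappa a_{02})}$; forcing this ratio to equal $\frac{B_{3}^{2}-B_{1}^{2}-B_{2}^{2}}{2B_{1}B_{2}}$ and squaring yields exactly condition (\ref{calc2}). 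The second nontrivial angular condition (\ref{calc1}) should arise by applying the spherical cosine rule in the boundary triangle $\triangle A_{1}A_{2}A_{3}$, decomposing $\alpha_{213}$ as a sum of the two spherical angles at $A_{1}$ coming from $\triangle A_{1}A_{0}A_{2}$ and $\triangle A_{1}A_{0}A_{3}$, then expanding $\cos(\alpha_{213}-\angle A_{2}A_{1}A_{0})$ and equating the resulting expression for $\cos(\kappa a_{13})$ with $\sqrt{1-\sin^{2}(\kappa a_{13})}$; the corresponding $\sin$ formula reproduces the left-hand side of (\ref{calc1}).

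The main obstacle I foresee is the angular bookkeeping required to pass between the spherical angles at $A_{1}$ in the three overlapping spherical triangles $\triangle A_{1}A_{0}A_{2}$, $\triangle A_{1}A_{0}A_{3}$ and $\triangle A_{1}A_{2}A_{3}$: each of (\ref{calc1}) and (\ref{calc2}) encodes a different compatibility between these spherical angles and the Euclidean Fermat-Torricelli cosines dictated by $(B_{1},B_{2},B_{3})$, and one must ensure that a single consistent set of spherical branch directions at $A_{0}$ realizes both equalities simultaneously. Once this compatibility is established, the immersion is defined vertex-wise by sending each $A_{i}\in\mathbb{S}_{\kappa}^{2}$ to the corresponding Euclidean vertex of the tetrahedron; the six edge distances are preserved by construction, the Dekster-Wilker hypothesis guarantees that the Euclidean realization is non-degenerate, and the Fermat angle equalities certify, via Theorem~\ref{theorrn1}, that $A_{0}$ is the weighted Fermat-Torricelli point of the immersed configuration with the weight assignment $(B_{1},B_{2},B_{3},1,1,1)$.
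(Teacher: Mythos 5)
Your first step --- invoking the Dekster--Wilker condition to realize the six arc lengths as the edges of a non-degenerate tetrahedron $A_{0}A_{1}A_{2}A_{3}$ in $\mathbb{R}^{3}$ --- is exactly the paper's argument, and your identification of (\ref{calc2}) as the spherical law of cosines in $\triangle A_{1}A_{0}A_{2}$ combined with the weighted angle condition $\cos\alpha_{102}=\frac{B_{3}^{2}-B_{1}^{2}-B_{2}^{2}}{2B_{1}B_{2}}$ is the correct reading of where that formula comes from (the paper simply cites equations (2.23)--(2.24) of the earlier work on the weighted Fermat point of the $K$-plane). But the second half of your plan proves the wrong statement and, as written, cannot succeed. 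The theorem does not assert that the apex $A_{0}$ of the Euclidean tetrahedron is the weighted Fermat--Torricelli point of the Euclidean triangle $A_{1}A_{2}A_{3}$; it asserts only that the spherical configuration --- whose interior vertex $A_{0}$ is the weighted Fermat point of the \emph{spherical} triangle, which is precisely what (\ref{calc1}) and (\ref{calc2}) encode --- is carried, edge length by edge length, onto a tetrahedron in $\mathbb{R}^{3}$ whose six edges inherit the weights $\{B_{1},B_{2},B_{3},1,1,1\}$. Trying to verify a Euclidean Fermat property of the apex is self-defeating: the weighted floating Fermat point of a Euclidean triangle lies in the plane of that triangle, so it could be the apex only of a degenerate tetrahedron, which contradicts your own non-degeneracy claim (your phrase ``non-degenerate tetrahedron with $A_{0}$ interior to the base triangle'' already contains this contradiction).

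Moreover, the verification you sketch conflates two different angles: the quantity $\frac{\cos(\kappa a_{12})-\cos(\kappa a_{01})\cos(\kappa a_{02})}{\sin(\kappa a_{01})\sin(\kappa a_{02})}$ is the cosine of the \emph{spherical} angle at $A_{0}$ on $\mathbb{S}_{\kappa}^{2}$, whereas the Euclidean condition of Theorem~\ref{theorrn1} would involve the \emph{Euclidean} angle $\cos\angle A_{1}A_{0}A_{2}=\frac{a_{01}^{2}+a_{02}^{2}-a_{12}^{2}}{2a_{01}a_{02}}$ computed from the straight edges of the immersed tetrahedron; these do not coincide, so even formally your check does not establish what you claim. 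The repair is to drop the Euclidean Fermat verification entirely: keep (\ref{calc1})--(\ref{calc2}) as the hypotheses that make $\{a_{01},a_{02},a_{03}\}$ a weighted Fermat tree on the sphere (this is the content of the cited Theorem~2.4 of the $K$-plane paper), and let the Dekster--Wilker realization (Lemma~\ref{DW1}) supply the distance-preserving map onto the weighted tetrahedron. That two-line argument is the whole of the paper's proof.
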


\begin{proof}
We construct an isometric immersion of a Fermat tree with respect to $\triangle A_{1}A_{2}A_{2}$ following Godel's observation of a tetrahedron with six flexible edges tightly pack to a sphere of given radius $R=\frac{1}{\sqrt{K}}=\frac{1}{\kappa}$ and taking into account two equations derived by \cite[Theorem~2.4, (2.23),(2.24)]{Zachos:13}, which determine the location of the weighted Fermat-tree. By restricting the edge lengths $\{a_{01},a_{02},a_{03},a_{12},a_{13},a_{23}\}\in DW_{\mathbb{R}^{2}}(\ell, s),$ we obtain a weighted tetrahedron $A_{0}A_{1}A_{2}A_{3}$ in $\mathbb{R}^{3}.$
\end{proof}

\begin{remark}
We note that the two equations (\ref{calc1}) and (\ref{calc2}) , which give the location of the weighted Fermat  point $\triangle A_{1}A_{2}A_{3}$ on the K-plane (two dimensional sphere $S_{K}^{2}$ and two dimensional
hyperbolic plane $H_{K}^{2}$) can be merged to a rational algebraic
equation, which depends only on the variable $z\equiv\sin\angle
A_{0}A_{1}A_{3}$ (see in \cite[Theorem~2.4, (2.8)]{Zachos:14}).
\end{remark}

In \cite[Theorem~3,p.~728]{Schoenberg:33}, Schoenberg obtained an isometric immersion of an $(N-1)$ spherical simplex in $\mathbb{S}_{\rho_{0}}^{N-1}$ to $\mathbb{S}_{\rho_{1}}^{N-2}$ by proving the existence of a radius $\rho_{1}\le \rho_{0}.$ Thus, by applying Schoenberg's isometric immersion, we derive the following reformulation of Theorem~\ref{Godelr3} for $N=4.$

\begin{theorem}[Schoenberg's isometric immersion of a weighted Fermat tree with respect to a boundary spherical triangle in $\mathbb{S}_{\rho_{1}}^{2}$ to a weighted tetrahedron in $\mathbb{S}_{\rho_{0}}^{3}$]\label{Godels3}
If we select a triad of weights $\{B_{1},B_{2},B_{3}\},$ which satisfy
\begin{eqnarray}\label{calcrho1}
\lefteqn{1-(\frac{\sin(\kappa_{1}
a_{13})}{\sin(\arccos(\frac{B_{2}^2-B_{1}^2-B_{3}^2}{2B_{1}B_{3}}))}\sin(\alpha_{213}-\arccos(\frac{\cos(\kappa_{1}
a_{02})-\cos(\kappa_{1} a_{01})\cos(\kappa_{1} a_{12})}{\sin(\kappa_{1}
a_{01})\sin(\kappa_{1} a_{12})}))))^2=
{}}\nonumber\\
&&{}(\cos(\kappa_{1} a_{01})\cos(\kappa_{1} a_{13})+\sin(\kappa_{1}
a_{13})\cos(\alpha_{213})\frac{\cos(\kappa_{1} a_{02})-\cos(\kappa_{1}
a_{01})\cos(\kappa_{1} a_{12})}{\sin(\kappa_{1} a_{12})}+ {}\nonumber\\
&&{}+\sin(\kappa_{1} a_{01})\sin(\kappa_{1} a_{13})\sin(\alpha_{213})\cdot
{}\nonumber\\
&&{}\cdot \sqrt{1-(\frac{\cos(\kappa_{1} a_{02})-\cos(\kappa_{1}
a_{01})\cos(\kappa_{1} a_{12})}{\sin(\kappa_{1} a_{01})\sin(\kappa_{1}
a_{12})})^2}\quad\quad\quad)^2,
\end{eqnarray}

\begin{equation}\label{calcrho12}
(\cos(\kappa_{1} a_{12})-\cos(\kappa_{1} a_{01})\cos(\kappa_{1}
a_{02}))^2=(\sin(\kappa_{1} a_{01})\sin(\kappa_{1}
a_{02})\frac{B_{3}^2-B_{1}^2-B_{2}^2}{2B_{1}B_{2}})^2
\end{equation}

then there exists a $\rho_{0}> \rho_{1},$ such that the weighted Fermat tree $\{a_{01},a_{02},a_{03}\}$ and the edges of the boundary spherical triangle in $\mathbb{S}_{\rho_{1}}^{2}$ are isometrically immersed to a weighted spherical tetrahedron in $\mathbb{S}_{\rho_{0}}^{3}$ with corresponding edges $\{a_{01},a_{02},a_{03},a_{12},a_{13},a_{23}\}$ and corresponding weights $\{B_{1},B_{2},B_{3},1,1,1\},$

for $\{a_{01},a_{02},a_{03},a_{12},a_{13},a_{23}\}\in DW_{\mathbb{S}_{\rho_{0}}^{3}}(\ell, s).$

\end{theorem}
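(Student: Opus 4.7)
The plan is to mimic the proof of Theorem \ref{Godelr3}, replacing the target $\mathbb{R}^{3}$ by a spherical space $\mathbb{S}_{\rho_{0}}^{3}$ of radius $\rho_{0}>\rho_{1}$ whose existence is guaranteed by Schoenberg's Theorem 3 \cite[Theorem~3, p.~728]{Schoenberg:33}. First I would reinterpret equations (\ref{calcrho1}) and (\ref{calcrho12}) as the weighted spherical Fermat-Torricelli balance conditions on $\mathbb{S}_{\rho_{1}}^{2}$, namely the relations (2.23)-(2.24) of \cite{Zachos:13} rewritten with curvature parameter $\kappa_{1}=1/\rho_{1}$. These encode that the geodesic branches $\{A_{0}A_{1},A_{0}A_{2},A_{0}A_{3}\}$ meet at the interior weighted Fermat point $A_{0}$ of $\triangle A_{1}A_{2}A_{3}$ balancing the weights $\{B_{1},B_{2},B_{3}\}$, and they determine the arcs $a_{01},a_{02},a_{03}$ as functions of the boundary arcs $a_{12},a_{13},a_{23}$ and of the weights.

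Next I would collect the six arcs $\{a_{01},a_{02},a_{03},a_{12},a_{13},a_{23}\}$ and treat them as candidate edges of a spherical $3$-simplex. Godel's flexible-string observation, transposed from Euclidean to spherical ambient space, suggests that the six-edge configuration sitting on $\mathbb{S}_{\rho_{1}}^{2}$ and enclosing a Fermat tree can be inflated off the $2$-sphere into a genuine $3$-simplex on a larger sphere. To make this rigorous I would invoke the converse direction of Schoenberg's Theorem 3 with $N=4$: given the $3$-simplex edge data realized on $\mathbb{S}_{\rho_{1}}^{N-2}$, there exists a radius $\rho_{0}\ge \rho_{1}$ such that the same six arc lengths are the edges of a non-degenerate $3$-simplex in $\mathbb{S}_{\rho_{0}}^{N-1}=\mathbb{S}_{\rho_{0}}^{3}$. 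Restricting the tuple to the Dekster-Wilker domain $\{a_{ij}\}\in DW_{\mathbb{S}_{\rho_{0}}^{3}}(\ell,s)$ ensures that the spherical Cayley--Menger determinant of $\{A_{0},A_{1},A_{2},A_{3}\}$ attains rank $N=4$, which rules out $\rho_{0}=\rho_{1}$ and yields the strict inequality $\rho_{0}>\rho_{1}$.

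Finally, the isometric immersion transports the weighted Fermat tree $\{a_{01},a_{02},a_{03}\}$ and the boundary spherical triangle $\{a_{12},a_{13},a_{23}\}\subset \mathbb{S}_{\rho_{1}}^{2}$ to the corresponding arcs of a weighted spherical tetrahedron $A_{0}A_{1}A_{2}A_{3}\subset\mathbb{S}_{\rho_{0}}^{3}$, with the weight assignment $\{B_{1},B_{2},B_{3},1,1,1\}$ inherited unchanged from the two-dimensional data; the angles at $A_{0}$ generated by the three branches satisfy (\ref{calcrho1})-(\ref{calcrho12}) with $\kappa_{1}$ replaced by $\kappa_{0}=1/\rho_{0}$ by the very construction of the lift.

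The hard part, I expect, is upgrading the existence of $\rho_{0}$ from $\rho_{0}\ge\rho_{1}$ to a strict inequality while simultaneously preserving the Fermat balance (\ref{calcrho1})-(\ref{calcrho12}) under the lift. This will require a continuity argument on the spherical Cayley--Menger determinant as the radius varies from $\rho_{1}$ upward, combined with Schoenberg's rank analysis, in order to exhibit the precise $\rho_{0}$ at which the determinant rank transitions from $3$ (the configuration is planar on $\mathbb{S}_{\rho_{1}}^{2}$) to $4$ (genuinely three-dimensional in $\mathbb{S}_{\rho_{0}}^{3}$); the Dekster-Wilker bounds are what force this transition to occur at a finite $\rho_{0}$, and the Fermat-Torricelli conditions restrict the lift to a one-parameter family whose unique member compatible with $\kappa_{0}$ is the desired weighted tetrahedron.
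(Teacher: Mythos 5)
Your proposal takes essentially the same route as the paper: the paper in fact supplies no separate proof environment for this theorem, justifying it only by the preceding sentence that invokes Schoenberg's isometric immersion \cite[Theorem~3, p.~728]{Schoenberg:33} to produce $\rho_{0}\ge\rho_{1}$ and by reading (\ref{calcrho1})--(\ref{calcrho12}) as the Fermat-point location equations of \cite[Theorem~2.4, (2.23),(2.24)]{Zachos:13} transplanted from the proof of Theorem~\ref{Godelr3}, with the Dekster--Wilker domain controlling realizability. Your elaboration of the rank transition of the spherical Cayley--Menger form from $3$ to $4$ as the radius grows (Godel's inflation argument) is a correct and more explicit account of the step the paper leaves entirely to the citation.
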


We note that we may use the weighted Fermat-Torricelli tree for an equilateral triangle having equal edge lengths $\frac{\pi}{2}$ in $\mathbb{S}_{1}^{2},$ which was given explicitly in \cite{Zachos:15}, in order to construct a weighted isometric immersion in $\mathbb{S}_{\rho_{0}}^{3}$ and in $\mathbb{R}^{3}.$ These isometric immersions need to be controlled by the conditions for the edge lengths described by the Dekster-Wilker spherical and Euclidean domain.

Let $A_{1}A_{2}\cdots A_{N-1}$ be an $(N-2)$-simplex in $\mathbb{S}_{\rho_{1}}^{N-2},$  $A_{0}$ be the weighted Fermat point of the $(N-2)$-simplex having degree $N-1$ and\\ $\{A_{1}A_{0},A_{2}A_{0},\cdots, A_{N-1}A_{0}\}$ the corresponding weighted Fermat tree. We denote by $B_{i}$ the weight, which corresponds to each geodesic branch $A_{i}A_{0}$ and by $a_{jm}$ the length of the geodesic arc $A_{j}A_{m},$ for $i=0,1,2,\cdots N-1$ and $j,m=0,1,2,\cdots, N-1.$

We denote by $\{A_{1},A_{2},\cdots,A_{0}\cdots, A_{N-1}\} \subset \{A_{1},A_{2},\cdots, A_{N-1}\}$ an $(N-2)$-simplex, which is derived by replacing the vertex\\ $A_{j}\in \{A_{1},A_{2},\cdots, A_{N-1}\}$ with $A_{0}.$

\begin{theorem}[Godel-Schoenberg's isometric immersion of weighted Fermat trees in $\mathbb{S}_{\rho_{1}}^{N-2}$ to $\mathbb{S}_{\rho_{0}}^{N-1}$]\label{godelschoenbergsn}
If we select an $(N-1)$-tuple of weights $\{B_{1},\cdots,B_{N-1}\},$ which satisfy
\begin{align}\label{construct312nsminus2}
\sum_{i=2}^{N-1}\frac{B_{i}}{\sin K a_{01}\sin K a_{0i}}(-\cos Ka_{01}\cos Ka_{0i}+ \cos Ka_{1i})-\nonumber \\ \sum_{i=1,{i\ne
j}}^{N-1}\frac{B_{i}}{\sin Ka_{0j} \sin Ka_{0i}}(-\cos Ka_{0j}\cos Ka_{0i}+\cos Ka_{ji})=0,
\end{align}

\begin{equation}\label{sphericalvolsminus2}
\operatorname{Vol}(A_{1},A_{2},\cdots,A_{0}\cdots, A_{N-1}))<\operatorname{Vol}(C_{1},C_{2},\cdots, C_{N-1}))
\end{equation}

where $C_{1},C_{2},\cdots C_{N-1}$ is a regular spherical $(N-2)$-simplex with edge length $c_{ij}=\frac{\sum_{i,j}a_{ij}}{\frac{(N-2)(N-1)}{2}}.$

then there exists a $\rho_{0}> \rho_{1},$ such that the weighted Fermat tree $\{a_{01},\cdots ,a_{0N-1}\}$ and the edges of the boundary $(N-1)$-simplex $\{a_{ij}\}$ in $\mathbb{S}_{\rho_{1}}^{N-2}$ are isometrically immersed to a weighted spherical $(N-1)$-simplex in $\mathbb{S}_{\rho_{0}}^{N-1}$ with corresponding edges $\{a_{01},\cdots,a_{0N-1},\{a_{ij}\}\}$ and corresponding weights $\{B_{1},B_{2},\cdots, B_{N-1},1,\cdots,1\},$

for the $\frac{(N-1)N}{2}$tuple of edge lengths $\{a_{01},\cdots,a_{0N-1},\{a_{ij}\}\}\in DW_{\mathbb{S}_{\rho_{0}}^{N-1}}(\ell, s).$

\end{theorem}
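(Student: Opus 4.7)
The plan is to leverage the existence of the Fermat tree in the smaller sphere (ensured by the hypotheses together with Theorem~\ref{floatmultitreesn} applied one dimension lower) and then invoke Schoenberg's dimensional-ascent immersion to realize the enlarged distance data as a simplex in a larger sphere.

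First I would observe that the pair of hypotheses~(\ref{construct312nsminus2}) and~(\ref{sphericalvolsminus2}) are precisely the necessary conditions from Theorem~\ref{floatmultitreesn} applied in dimension $N-2$ (with the role of $N$ played by $N-1$); hence the weighted floating Fermat-Torricelli tree $\{a_{01},\ldots,a_{0(N-1)}\}$ is realized inside $\mathbb{S}_{\rho_1}^{N-2}$ with respect to the boundary $(N-2)$-simplex $A_1\cdots A_{N-1}$. The isoperimetric volume bound~(\ref{sphericalvolsminus2}), which follows from Lemma~\ref{regmaxvol1s}, confines $A_0$ to a region where the Fermat problem is non-degenerate. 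Together with the boundary edges $\{a_{ij}\}$, these branches furnish the complete $\frac{N(N-1)}{2}$-tuple of pairwise spherical distances on the $N$ points $A_0,A_1,\ldots,A_{N-1}$.

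Second, I would feed this distance matrix into Schoenberg's spherical immersion theorem~\cite[Theorem~3]{Schoenberg:33}. The geometric picture is G\"odel's flexible-string edges tightening around a sphere that is gradually inflated from the interior; analytically, one works with the spherical Gram matrix $(\cos\kappa_1 a_{ij})$ whose positive-semidefinite rank is at most $N-1$ on the smaller sphere, and Schoenberg's result then produces a unique larger radius $\rho_0>\rho_1$ for which the same distances realize a genuine $(N-1)$-simplex of rank $N$ in $\mathbb{S}_{\rho_0}^{N-1}$. The addition of the $N-1$ Fermat branches to the boundary simplex is precisely what raises the rank by one and forces $\rho_0$ to be strictly larger than $\rho_1$. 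To guarantee the immersed configuration is a bona fide member of a spherical Frechet multisimplex in $\mathbb{S}_{\rho_0}^{N-1}$, I would invoke the Dekster--Wilker hypothesis $\{a_{01},\ldots,a_{0(N-1)},\{a_{ij}\}\}\in DW_{\mathbb{S}_{\rho_0}^{N-1}}(\ell,s)$; by Theorem~\ref{Snfrechet} this controls incongruence and rules out collapse. The weights $B_i$ transfer verbatim to the edges $A_0A_i$ of the immersed simplex while the boundary edges $a_{ij}$ retain weight~$1$, producing the desired weighted structure.

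The main obstacle I anticipate is the existence (and uniqueness) of $\rho_0>\rho_1$: one must verify that the spherical Cayley-Menger determinant of the $N$-point configuration, viewed as a function of the curvature parameter $1/\rho_0^2$, admits a zero strictly below $1/\rho_1^2$ together with the correct sign change. This monotonicity argument is the analytic substance of G\"odel's inflating-sphere heuristic; once it is settled, the remainder of the proof reduces to transferring the weights $B_i$ and appealing to results already cited in the excerpt.
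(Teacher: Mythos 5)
Your proposal follows essentially the same route as the paper's (very terse) proof: the paper simply invokes the conditions of Theorem~\ref{floatmultitreesn} to secure the weighted Fermat tree in $\mathbb{S}_{\rho_{1}}^{N-2}$ and then applies Schoenberg's isometric immersion together with the Dekster--Wilker restriction on the enlarged edge-length tuple. Your elaboration via the spherical Gram matrix, the rank increase caused by adjoining $A_{0}$, and the sign analysis of the Cayley--Menger determinant supplies detail the paper omits, but it is a fleshing-out of the same argument rather than a different one.
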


\begin{proof}
By applying Schoenberg isometric immersion and by taking into account the conditions of Theorem~\ref{floatmultitreesn}, we can construct an isometric immersion od weighted Fermat trees in $\mathbb{S}_{\rho_{1}}^{N-2}$ to $\mathbb{S}_{\rho_{0}}^{N-1},$ the $\frac{(N-1)N}{2}$tuple of edge lengths $\{a_{01},\cdots,a_{0N-2},\{a_{ij}\}\}\in DW_{\mathbb{S}_{\rho_{0}}^{N-1}}(\ell, s).$
\end{proof}

By substituting $\rho=\infty$ in Schoenberg's isometric immersion, we get Godel-Schoenberg's isometric immersion  (\cite[Theorem~3',p.~730]{Schoenberg:33} ), we derive Godel's isometric immersion of weighted Fermat trees for a boundary $(N-1)$-simplex in $\mathbb{S}_{\rho}^{N-1}$ to a weighted $N$-simplex $\mathbb{R}^{N}.$

We denote by $\{A_{1},A_{2},\cdots,A_{0}\cdots, A_{N}\} \subset \{A_{1},A_{2},\cdots, A_{N}\}$ an $(N-1)$-simplex, which is derived by replacing the vertex $A_{j}\in \{A_{1},A_{2},\cdots, A_{N}\}$ with $A_{0}.$

\begin{theorem}[Godel's isometric immersion of weighted Fermat trees in $\mathbb{S}_{\rho}^{N-1}$ to $\mathbb{R}^{N}$]\label{godeltheoreisomimersphere}

If we select an $N$-tuple of weights $\{B_{1},\cdots,B_{N-1},B_{N}\},$ which satisfy
\begin{align}\label{construct312nsminus2r}
\sum_{i=2}^{N}\frac{B_{i}}{\sin K a_{01}\sin K a_{0i}}(-\cos Ka_{01}\cos Ka_{0i}+ \cos Ka_{1i})-\nonumber \\ \sum_{i=1,{i\ne
j}}^{N}\frac{B_{i}}{\sin Ka_{0j} \sin Ka_{0i}}(-\cos Ka_{0j}\cos Ka_{0i}+\cos Ka_{ji})=0,
\end{align}

\begin{equation}\label{sphericalvolsminus2r}
\operatorname{Vol}(A_{1},A_{2},\cdots,A_{0}\cdots, A_{N}))<\operatorname{Vol}(C_{1},C_{2},\cdots, C_{N-1},C_{N})
\end{equation}

where $\{C_{1},C_{2},\cdots C_{N-1},C_{N}\}$ is a regular spherical $(N-1)$simplex with edge length $c_{ij}=\frac{\sum_{i,j}a_{ij}}{\frac{(N-1)N}{2}}.$

then there exists an isometric immersion of the weighted Fermat tree $\{a_{01},\cdots ,a_{0N}\}$ and the edges of the boundary $N-1$ simplex $\{a_{ij}\}$ in $\mathbb{S}_{\rho_{1}}^{N-1}$ to a weighted $N$-simplex in $\mathbb{R}^{N}$ with corresponding edges $\{a_{01},\cdots,a_{0N},\{a_{ij}\}\}$ and corresponding weights $\{B_{1},B_{2},\cdots, B_{N},1,\cdots,1\},$

for the $\frac{(N(N+1}{2}$tuple of edge lengths $\{a_{01},\cdots,a_{0N},\{a_{ij}\}\}\in DW_{\mathbb{R}^{N}}(\ell, s).$

\end{theorem}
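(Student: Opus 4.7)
The strategy is to mimic the proof of Theorem~\ref{godelschoenbergsn} but replace Schoenberg's sphere-to-larger-sphere immersion by its degenerate limit $\rho_{0}\to\infty$, namely Schoenberg's Theorem~3' of \cite{Schoenberg:33}, which supplies an isometric immersion of a spherical distance configuration into $\mathbb{R}^{N}$. Thus the task is to assemble the correct distance data from the weighted spherical Fermat tree and verify that the resulting data are Euclidean-realisable.

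First, I would use the hypothesis (\ref{construct312nsminus2r}) together with Theorem~\ref{Snfrechet} applied in $\mathbb{S}_{\rho}^{N-1}$ to produce the weighted floating Fermat-Torricelli point $A_{0}$ in the interior of the boundary $(N-1)$-simplex $A_{1}A_{2}\cdots A_{N}$; this yields the branch lengths $\{a_{01},\ldots,a_{0N}\}$ of the weighted Fermat tree. Combining these with the boundary edge lengths $\{a_{ij}\}$ gives a full $\frac{N(N+1)}{2}$-tuple of prescribed distances. The inequality (\ref{sphericalvolsminus2r}) together with the spherical isoperimetric inequality (Lemma~\ref{regmaxvol1s}) guarantees that the spherical simplex has volume bounded away from the regular-simplex extremum, so the configuration sits well inside the convexity radius of $\mathbb{S}_{\rho}^{N-1}$ and the Fermat tree is genuinely uniquely defined.

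Next, I would invoke the hypothesis that the entire tuple $\{a_{01},\ldots,a_{0N},\{a_{ij}\}\}$ lies in the Dekster-Wilker Euclidean domain $DW_{\mathbb{R}^{N}}(\ell,s)$. By Lemma~\ref{DW1}, this is precisely the condition ensuring that these $\frac{N(N+1)}{2}$ lengths are realizable as the edges of a (possibly non-unique) Euclidean $N$-simplex, or equivalently, that the Cayley-Menger determinant built from them, together with its principal minors, satisfies Schoenberg's rank-$N$ positivity criterion from the introduction. This is exactly the input required by Godel-Schoenberg Theorem~3' of \cite{Schoenberg:33}: running Godel's inflation argument in reverse, the spherical distance configuration of the tree plus boundary can be ``flattened'' to Euclidean $\mathbb{R}^{N}$ without distorting any of the constituent lengths. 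The weights $\{B_{1},\ldots,B_{N},1,\ldots,1\}$ are then attached purely combinatorially to the corresponding edges of the image, since lengths are preserved under isometric immersion.

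The main obstacle I anticipate is verifying that the spherical Fermat branch lengths $a_{0i}$, which are determined only \emph{implicitly} by (\ref{construct312nsminus2r}), are compatible with the Euclidean realizability requirement. In other words, one must check that the Fermat solution stays inside $DW_{\mathbb{R}^{N}}(\ell,s)$ rather than just assume it. This is handled by Proposition~\ref{boundbranchtrees} applied to the Euclidean side, which bounds each branch in terms of the average edge length, and by the Dekster-Wilker inequalities on the boundary data, which bound $\max a_{ij}/\min a_{ij}$; together they confine the full tuple to the required domain so that Schoenberg's criterion applies and the isometric immersion into $\mathbb{R}^{N}$ exists.
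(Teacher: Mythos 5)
Your proposal follows essentially the same route as the paper: the paper supplies no separate proof for this theorem, relying instead on the remark immediately preceding it (substitute $\rho=\infty$ in Schoenberg's immersion, i.e.\ invoke Theorem~3$'$ of \cite{Schoenberg:33}) together with the conditions of Theorem~\ref{floatmultitreesn} and the restriction of the full $\frac{N(N+1)}{2}$-tuple to $DW_{\mathbb{R}^{N}}(\ell,s)$, which is exactly the argument you assemble. Your extra concern about whether the implicitly determined branch lengths $a_{0i}$ actually land in $DW_{\mathbb{R}^{N}}(\ell,s)$ is a fair observation, but in the theorem as stated this membership is simply part of the hypothesis, so no further verification is required.
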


We consider a $\frac{(N-1)N}{2}-$tuple of positive real numbers $a_{ij},$ determining the edge lengths of $(N-1)$ incongruent spherical simplexes, such that: $a_{ij}$ belong to the spherical domain of Dekster-Wilker $DW_{\mathbb{S}_{\rho_{0}}^{N-1}} (\ell,s)$ and $s\le a_{ij}\le \ell.$ Thus, all incongruent pairwise spherical $(N-1)$-simplexes may yield up to $\frac{\frac{1}{2}(N-1)N!}{N!}$ weighted Fermat-Torricelli trees for a given $N$-tuple of weights $\{B_{1},B_{2},\cdots, B_{N}.$ The union of these weighted Fermat-Torricelli trees determine the weighted Fermat-Frechet multitree (solution).

\begin{definition}[Stable isometric immersion of a variable weighted Fermat-Frechet multitree]
We call a stable isometric immersion of a variable weighted Fermat-Frechet multitree for a $\frac{(N-1)N}{2}-$tuple of positive real numbers $a_{ij},$ determining $(N-1)-$ incongruent spherical simplexes in $\mathbb{S}_{\rho}^{N-1}$ to $\mathbb{R}^{N}$ the solution set of the variable weights $\{B_{1},B_{2},\cdots,B_{N}\},$ such that each  $N-1$ spherical simplex with the corresponding variable weighted Fermat tree in $\mathbb{S}_{\rho}^{N-1}$ is isometrically immersed to an $N-$simplex in $\mathbb{R}^{N}.$
\end{definition}

\begin{definition}[Godel-Frechet multisimplex]
We call Godel-Frechet multisimplex a union of $N$-simplexes in $\mathbb{R}^{N},$ which is derived by a stable isometric immersion of a variable weighted Fermat-Frechet multitree in $\mathbb{S}_{\rho}^{N-1}.$
\end{definition}

We denote by $B_{i}(t)$ a variable weight, which correspond to the geodesic branch $A_{0}A_{i}$ of each weighted Fermat tree $\{A_{1}A_{0},c\dots, A_{N}A_{0}\},$ which forms a weighted Fermat-Frechet multitree, for $t>0,$ $i=1,2,\cdots, N.$

\begin{theorem}[Godel's isometric immersions of a weighted Fermat-Frechet multitree in $\mathbb{S}_{\rho}^{N-1}$ to $\mathbb{R}^{N}$ ]\label{stabletheorem1}

The following conditions for $B_{i}(t)$ provide a stable isometric immersion of a weighted Fermat-Frechet multisimplex in $\mathbb{S}_{\rho}^{N-1}$ to a Godel-Frechet simplex in $\mathbb{R}^{N}:$

\begin{align}\label{construct312nsminus2rvar}
\sum_{i=2}^{N}\frac{B_{i}(t)}{\sin K a_{01}\sin K a_{0i}}(-\cos Ka_{01}\cos Ka_{0i}+ \cos Ka_{1i})-\nonumber \\ \sum_{i=1,{i\ne
j}}^{N}\frac{B_{i}(t)}{\sin Ka_{0j} \sin Ka_{0i}}(-\cos Ka_{0j}\cos Ka_{0i}+\cos Ka_{ji})=0,
\end{align}

\begin{equation}\label{isopcondvariableweights}
\sum_{i=1}^{N}B_{i}(t)=1,
\end{equation}

\begin{equation}\label{sphericalvolsminus2rvar}
\operatorname{Vol}(A_{1},A_{2},\cdots,A_{0}\cdots, A_{N}))<\operatorname{Vol}(C_{1},C_{2},\cdots, C_{N}))
\end{equation}

where $C_{1},C_{2},\cdots C_{N}$ is a regular spherical $(N-1)$simplex with edge length $c_{ij}=\frac{\sum_{i,j}a_{ij}}{\frac{(N-1)N}{2}}.$


for the $\frac{(N(N+1}{2}$tuple of edge lengths $\{a_{01},\cdots,a_{0N},\{a_{ij}\}\}\in DW_{\mathbb{R}^{N}}(\ell, s).$

\end{theorem}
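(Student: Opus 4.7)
The plan is to reduce the multitree case to the single-tree case of Theorem~\ref{godeltheoreisomimersphere}, using the parameter $t$ to sweep through the incongruent boundary $(N-1)$-spherical simplexes in the Frechet multisimplex. The three displayed conditions are designed, respectively, to (i) locate the Fermat point for each member of the family, (ii) normalize the weight family so the immersions can be compared across $t$, and (iii) control the volume so the image avoids degeneration.

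First, I would fix a parameter value $t_\alpha$ for each pairwise incongruent $(N-1)$-simplex $\alpha$ in $\mathbb{S}_{\rho}^{N-1}$. At $t=t_\alpha$, condition (\ref{construct312nsminus2rvar}) becomes exactly the spherical Fermat-Frechet equation (\ref{construct312nsminus2r}) used in Theorem~\ref{godeltheoreisomimersphere} to locate the weighted Fermat point $A_{0}^{(\alpha)}$ for the weights $B_{i}(t_\alpha)$; condition (\ref{sphericalvolsminus2rvar}) matches (\ref{sphericalvolsminus2r}) verbatim via the same regular-simplex comparison from Lemma~\ref{regmaxvol1s}; and the Dekster--Wilker containment $\{a_{01},\ldots,a_{0N},\{a_{ij}\}\}\in DW_{\mathbb{R}^{N}}(\ell,s)$ places the isometrically immersed edge data in the admissible Euclidean Dekster--Wilker regime. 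Thus Theorem~\ref{godeltheoreisomimersphere} produces, for each $\alpha$, an isometric immersion of the weighted Fermat tree $\{a_{01},\ldots,a_{0N}\}$ together with the boundary $(N-1)$-simplex in $\mathbb{S}_{\rho}^{N-1}$ to a weighted $N$-simplex in $\mathbb{R}^{N}$ carrying weights $\{B_{1}(t_\alpha),\ldots,B_{N}(t_\alpha),1,\ldots,1\}$.

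Second, I would assemble the collection $\{\alpha\mapsto\text{image}(A_{0}^{(\alpha)}A_{1}\cdots A_{N})\}$ into a Godel--Frechet multisimplex in the sense of the preceding definition. Here the normalization (\ref{isopcondvariableweights}) plays the decisive role: it forces $(B_{1}(t),\ldots,B_{N}(t))$ to lie on the standard simplex in $\mathbb{R}^{N}$, preventing trivial rescalings and guaranteeing that the weights along the parameter $t$ are directly comparable. Combining this normalization with the convexity--compactness argument for the spherical weighted Fermat--Torricelli point used in the proof of Theorem~\ref{Snfrechet} yields a continuous path $t\mapsto(B_{1}(t),\ldots,B_{N}(t))$ whose images under Theorem~\ref{godeltheoreisomimersphere} interpolate between the fibers $\alpha$, producing a genuine one-parameter deformation of the multitree rather than a disjoint collection of immersions.

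Finally, one argues that the assembled image exhausts the full Godel--Frechet multisimplex: by Lemma~\ref{DW1} the Dekster--Wilker containment realizes the maximum count $\tfrac{\tfrac{1}{2}N(N+1)!}{(N+1)!}$ of pairwise incongruent $N$-simplexes with the prescribed edge data, and by Theorem~\ref{theorrn1} each such $N$-simplex corresponds to exactly one weighted Fermat tree, which by the construction above arises as the image of some $t=t_\alpha$. The main obstacle will be step two: verifying that the variable-weight system (\ref{construct312nsminus2rvar})--(\ref{isopcondvariableweights}) is simultaneously solvable across all incongruent boundary simplexes. This amounts to an implicit-function-theorem argument applied to the Fermat equations, where non-degeneracy of the Jacobian is supplied by the strict convexity of the weighted length functional $\sum_{i=1}^{N}B_{i}(t)\,a_{0i}$ on the convex ball of radius less than $\tfrac{\pi}{4\sqrt{K}}$ and the compactness of the standard simplex of normalized weights; if this non-degeneracy fails at isolated boundary cases one passes to the closure and uses continuity of the Fermat point in $(a_{ij},B_{i})$.
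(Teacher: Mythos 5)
Your proposal follows essentially the same route as the paper: the paper's own proof simply substitutes the variable weights $B_{i}(t)$ into the conditions of Theorem~\ref{godeltheoreisomimersphere}, imposes the normalization (\ref{isopcondvariableweights}), and invokes that theorem's isometric immersion for each member of the Frechet multisimplex. Your additional material (the per-simplex parametrization by $t_{\alpha}$, the Dekster--Wilker counting of incongruent simplexes, and the implicit-function-theorem discussion of simultaneous solvability) supplies detail the paper leaves implicit but does not alter the underlying argument.
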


\begin{proof}
By substituting $B_{i}(t) \to B_{i}$ in (\ref{construct312nsminus2r}) taking into account the isoperimetric condition for the variable weights (\ref{isopcondvariableweights}) and the volume inequality (\ref{sphericalvolsminus2r}) taken from Theorem~\ref{godeltheoreisomimersphere}, we derive (\ref{construct312nsminus2rvar}), which yields the desired stable isometric immersion of the variable weighted Fermat-Frechet spherical multitree to the variable weighted Godel-Frechet multisimplex in $\mathbb{R}^{N}.$
\end{proof}

We focus on small perturbations $\epsilon_{ij}$ of a $\frac{(N-1)N}{2}-$tuple of positive real numbers $a_{ij},$ determining the edge lengths of $(N-1)$ incongruent spherical simplexes, such that: $a_{ij}, a_{ij}+\epsilon_{ij}\in DW_{\mathbb{S}_{\rho_{0}}^{N-1}} (\ell,s)$ and $s\le a_{ij}\le \ell,$
under the condition: \[\sum_{i,j=1,i<j}^{N} a_{ij}=\sum_{i,j=1,i<j}^{N} a_{ij}+\epsilon_{ij},\]
for $\|\epsilon_{ij}\|<<1$ and $\sum_{i,j=1, i<j}^{N}\epsilon_{ij}=0.$

By substituting $a_{ij}+\epsilon_{ij} \to a_{ij}$ in Theorem~\ref{stabletheorem1}, we derive the conditions to create a family of stable isometric immersions of spherical variable weighted Fermat-Frechet multitrees to Godel-Frechet multisimplexes depending on the isoperimetric perturbations $\epsilon_{ij}$ of the initial boundary simplex in $\mathbb{S}_{\rho_{0}}^{N-1}.$

\begin{theorem}[Stable isometric immersions of weighted Fermat-Frechet multitrees for isoperimetric deformations of the boundary $N-$simplex in $\mathbb{S}_{\rho_{0}}^{N-1}$  to $\mathbb{R}^{N}$ ] \label{stabletheorem2}

The following conditions for $B_{i}(t)$ provide a family of stable isometric immersions of variable weighted Fermat-Frechet multisimplexes in $\mathbb{S}_{\rho}^{N-1}$ to $\epsilon_{ij}$-Godel-Frechet simplexes in $\mathbb{R}^{N}:$

\begin{align}\label{construct312nsminus2rvarstab1}
\sum_{i=2}^{N}\frac{B_{i}(t)}{\sin K a_{01}\sin K a_{0i}}(-\cos Ka_{01}\cos Ka_{0i}+ \cos K (a_{1i}+\epsilon_{1i})-\nonumber \\ \sum_{i=1,{i\ne
j}}^{N}\frac{B_{i}(t)}{\sin Ka_{0j} \sin Ka_{0i}}(-\cos Ka_{0j}\cos Ka_{0i}+\cos K (a_{ji}+\epsilon_{ji})=0,
\end{align}

\begin{equation}\label{isopcondvariableweightsstab2}
\sum_{i=1}^{N}B_{i}(t)=1,
\end{equation}

\begin{equation}\label{sphericalvolsminus2rvarstab3}
\operatorname{Vol}(A_{1},A_{2},\cdots,A_{0}\cdots, A_{N})(\{\epsilon_{ij}\})<\operatorname{Vol}(C_{1},C_{2},\cdots,C_{N}))(\{\epsilon_{ij}\})
\end{equation}

where $C_{1},C_{2},\cdots C_{N}$ is a regular spherical $(N-1)$simplex with edge length $c_{ij}=\frac{\sum_{i,j}a_{ij}+\epsilon_{ij}}{\frac{(N-1)N}{2}}.$


for the $\frac{(N(N+1}{2}$tuple of edge lengths $\{a_{01},\cdots,a_{0N},\{a_{ij}\}\}\in DW_{\mathbb{R}^{N}}(\ell, s).$

\end{theorem}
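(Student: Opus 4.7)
The plan is to reduce to Theorem~\ref{stabletheorem1} applied to the perturbed edge data $\{a_{ij}+\epsilon_{ij}\}$, and then verify that the three ingredients (the spherical Fermat equations, the isoperimetric weight constraint, and the comparison volume bound) behave coherently under the isoperimetric perturbation. First I would check that each perturbed tuple still lies in the Dekster--Wilker spherical domain $DW_{\mathbb{S}_{\rho_{0}}^{N-1}}(\ell,s)$ and that the enriched $\tfrac{N(N+1)}{2}$-tuple $\{a_{01},\ldots,a_{0N},\{a_{ij}+\epsilon_{ij}\}\}$ lies in $DW_{\mathbb{R}^{N}}(\ell,s)$. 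Since these domains are closed regions in $\mathbb{R}^{2}$ with nonempty interior around the original point $(K\ell, Ks)$, and since the bounding curves $\lambda_{N}$, $m_{N}$ are continuous, the hypothesis $\|\epsilon_{ij}\|\ll 1$ preserves membership.

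Second, I would obtain (\ref{construct312nsminus2rvarstab1}) from (\ref{construct312nsminus2rvar}) by the formal substitution $a_{ij}\to a_{ij}+\epsilon_{ij}$ in the boundary edges only; the branch lengths $a_{0i}$ remain unknowns and are determined jointly with $B_{i}(t)$ by the Fermat system of Theorem~\ref{Snfrechet}. The normalization (\ref{isopcondvariableweightsstab2}) is inherited verbatim from (\ref{isopcondvariableweights}) and is independent of $\epsilon_{ij}$. For the volume inequality (\ref{sphericalvolsminus2rvarstab3}) I would invoke Lemma~\ref{regmaxvol1s} applied to the perturbed boundary simplex $A_{1}A_{2}\cdots A_{N}$: because the perturbation is isoperimetric, i.e.\ $\sum_{i<j}\epsilon_{ij}=0$, the comparison regular simplex $C_{1}\cdots C_{N}$ has mean edge length $c_{ij}=\tfrac{\sum_{i<j}(a_{ij}+\epsilon_{ij})}{N(N-1)/2}$ equal to the unperturbed value, and the inequality persists with an $\epsilon$-adjusted left-hand side.

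Third, applying Theorem~\ref{godeltheoreisomimersphere} to each perturbed incongruent boundary spherical $(N-1)$-simplex together with its weighted Fermat tree $\{a_{01},\ldots,a_{0N}\}$ produces a weighted $N$-simplex in $\mathbb{R}^{N}$ with weights $\{B_{1}(t),\ldots,B_{N}(t),1,\ldots,1\}$. Taking the union over all $\tfrac{\tfrac{1}{2}(N-1)N!}{N!}$ mutually incongruent configurations yields the $\epsilon_{ij}$-Godel-Frechet multisimplex and supplies the desired family of stable immersions parametrized by the perturbation.

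The main obstacle I anticipate is to show that a \emph{single} family $\{B_{i}(t)\}_{i=1}^{N}$ can satisfy the Fermat equation (\ref{construct312nsminus2rvarstab1}) \emph{simultaneously} for every incongruent boundary simplex together with the normalization (\ref{isopcondvariableweightsstab2}); this is a priori an overdetermined system. The natural remedy is an implicit function argument centred at $\epsilon_{ij}=0$, where Theorem~\ref{stabletheorem1} supplies a stable solution $(B_{1}(t),\ldots,B_{N}(t))$ with Jacobian with respect to the weights that is nondegenerate on the hyperplane $\sum_{i}B_{i}(t)=1$. Smallness of $\|\epsilon_{ij}\|$ then transports this solution to the perturbed system, so the isometric immersion persists stably in the perturbative regime, which is precisely the content of the theorem.
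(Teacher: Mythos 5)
Your proposal follows essentially the same route as the paper: the theorem is obtained by substituting the isoperimetrically perturbed edge lengths $a_{ij}+\epsilon_{ij}$ into Theorem~\ref{stabletheorem1}, and the paper in fact records no separate proof beyond the one-sentence remark preceding the statement. Your additional checks --- stability of membership in the Dekster--Wilker domain under small perturbations, the observation that $\sum_{i<j}\epsilon_{ij}=0$ leaves the comparison regular simplex's edge length unchanged, and the implicit-function argument addressing whether a single weight family $\{B_{i}(t)\}$ can satisfy the Fermat equations simultaneously for all incongruent boundary simplexes --- go beyond what the paper provides and fill genuine gaps that its one-line justification leaves open.
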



\section{Gromov isometries for the Fermat-Steiner-Frechet solution in $\mathbb{H}^{N}$}
In this section, we introduce an embedding (inclusion map) of a Fermat-Steiner-Frechet multitree solution for a given $\frac{N(N+1}{2}$-tuple of edge lengths determining incongruent boundary $N$-simplexes to an associated family of Gromov isometries up to an additive constant for $N$-simplexes and ideal $N$-simplexes in $\mathbb{H}_{K}^{N}.$


We proceed by giving the definitions of an ideal $k$-simplex $\triangle_{i}^{k}\equiv A_{1}^{i}A_{2}^{i}\cdots A_{k+1}^{i}$ in $\mathbb{H}_{N}^{k}$ with constant curvature $K=-\epsilon^{2}<0,$ a class of geodesic trees enriched by some properties given by Gromov (intermediate geodesic Fermat-Steiner trees) and Gromov's (+) isometry (up to an additive constant).


The hyperbolic $N$ space $\mathbb{H}_{N}^{k}$ may be represented as the Poincare disc model or the half space model or the projective (Klein) model.

If we consider that $\mathbb{H}_{N}$ is represented by the Poincare disc model
\[\mathbb{H}^{N}\approx D^{N}=\{x\in \mathbb{R}^{N}| ||x||<1  \}\] with the Riemannian metric
\[ds^{2}=\frac{4}{1-r^2}\sum_{i=1}^{N}(dx_{i})^{2}, \ r^{2}\equiv \sum_{i=1}^{N}x_{i}^{2} \] (see in \cite{HaagerupMunhkholm:81}).

\begin{definition}{\cite[p.~1]{HaagerupMunhkholm:81}}\label{hyperboundary}
The hyperbolic boundary $\partial \mathbb{H}^{N}$ is the "sphere at infinity"
\[\partial \mathbb{H}^{N}=\{x\in \mathbb{R}^{N}| ||x||=1\}=\mathbb{S}^{N-1}.\]
\end{definition}


\begin{definition}{An ideal $k$-simplex $\triangle_{i}^{k},$ \cite[p.~233]{Gromov:87}}
An ideal $N$-simplex in $\mathbb{H}_{-\epsilon^{2}}^{N}$ is the convex hull of $N+1$ distinct points (vertices) \\$\{A_{1}^{id},A_{2}^{id},\cdots ,A_{N}^{id},A_{N+1}^{id}\}$ in the hyperbolic boundary $\partial \mathbb{H}_{-\epsilon^{2}}^{N}$ ("sphere at infinity").
\end{definition}
In \cite{Thurston:97}, Thurston showed that all ideal triangles in $\mathbb{H}_{-1}^{2}$ are isometric. In \cite{Milnor:94} Milnor derived that ideal hyperbolic tetrahedra are not mutually isometric, In \cite{Gromov:87}[pp.~233-234]Gromov assigned properties in a geodesic tree $ S\subset \Delta^{N}\subset \mathbb{H}^{N}$ and used in a canonical way an inclusion map $S\to \Delta^{N},$ to confirm that ideal $N-$simplexes are not mutually isometric for $N\ge 3.$

We extend the definitions of Steiner tree topologies given in \cite{GilbertPollak:68} and intermediate Fermat-Steiner tree topologies, which have been introduced in \cite{Zachos:17} in $\mathbb{R}^{3},$ for geodesic trees in $\mathbb{H}_{-\epsilon^{2}}^{N}.$

\begin{definition}[Geodesic tree topology]
A geodesic tree topology is a connection matrix specifying which pairs of points from the list\\ $\{A_{1}, A_{2},\cdots A_{N}, X_{1}, X_{2},\cdots X_{N-2}\} \in \mathbb{H}_{-\epsilon^{2}}^{N}$ have a connecting geodesic segment (edge).
\end{definition}

\begin{definition}[Degree of a vertex]
The degree of a vertex corresponds to the number of connections of the vertex with geodesic segments.
\end{definition}

\begin{definition}[Degree of an intermediate Fermat-Steiner point]
The degree of a (weighted) intermediate Fermat-Steiner point (vertex) $X$ with respect to a boundary simplex $A_{1}, A_{2},\cdots A_{N}\in \mathbb{H}_{-\epsilon^{2}}^{N}$ is greater or equal than $3 $ and less or equal than $N.$
\end{definition}

\begin{definition}[Fermat (geodesic) tree topology]
A Fermat tree topology is a tree topology  with a connection matrix $A_{1}, A_{2},\cdots A_{N}, X_{1},$ such that the boundary vertices $A_{1}, A_{2},\cdots A_{N}$ have degree $1$ and the Fermat-point $X_{1}$ has degree $N$.
\end{definition}

\begin{definition}[Fermat-Steiner (geodesic)) tree topology]
A (full) Fermat-Steiner tree topology is a tree topology  with a connection matrix $A_{1}, A_{2},\cdots A_{N}, X_{1},X_{2},\cdots X_{N-1},$ such that the boundary vertices of the simplex $A_{1}, A_{2},\cdots A_{N}$ have degree $1$ and the Fermat-Steiner points $X_{1},X_{2},\cdots X_{N-1}$ have degree $3.$
\end{definition}

\begin{definition}[Intermediate Fermat-Steiner (geodesic) tree topology in $\mathbb{H}_{k}^{N}$]
An intermediate Fermat-Steiner tree topology is a tree topology, which has $m\le N-2$ $X_{1},X_{2},\cdots X_{m}$ vertices inside the simplex $A_{1}, A_{2},\cdots A_{N} \in \mathbb{H}_{-\epsilon^{2}}^{N},$ such that the boundary vertices of the simplex $A_{1}, A_{2},\cdots A_{N}$ have degree $1$ and the intermediate Fermat-Steiner points $X_{1},X_{2},\cdots X_{m}$ have degree less than  $N.$
\end{definition}

The intermediate Fermat-Steiner tree $S$ is a union of some geodesic segments between the intermediate Fermat-Steiner vertices $X_{j}$ and some geodesic segments connecting each $A_{i}$ with some $X_{j}.$
In \cite{Gromov:87}, Gromov assigned in a canocical way an intermediate Fermat-Steiner geodesic tree for an ideal simplex  $A_{1}, A_{2},\cdots A_{N}$ some geometric properties \cite[(a),(b),(c),(d),(e), p.~233]{Gromov:87}.

\begin{example}
An intermediate Fermat-Steiner tree $S$ of a $4$-simplex $A_{1}, A_{2}, A_{3}, A_{4},A_{5}$ associated with an intermediate Fermat-Steiner tree topology is a collection of the geodesic segments $\{A_{1}X_{1},A_{4}X_{1},X_{1}X_{2},$\\ $A_{5}X_{2},A_{2}X_{2}, A_{3}X_{2}\}.$ The intermediate Fermat Steiner points $X_{1},$ $X_{2}$ have degrees $3$ and $4,$ respectively and the boundary vertices $\{A_{1}, A_{2}, A_{3},$\\$ A_{4},A_{5}\}$ have degree $1.$
\end{example}

\begin{definition}[Intermediate Fermat-Steiner Frechet multitree]\label{FSFtree}
An intermediate Fermat-Steiner Frechet multitree is a collection of intermediate Fermat-Steiner trees, which correspond to incongruent $N$-simplexes derived by a $\frac{N(N+1}{2}-$tuple of positive real numbers determining edge lengths.
\end{definition}

\begin{remark}
An intermediate Fermat-Steiner Frechet multitree $MS$ coincides with a Fermat-Frechet multitree, by setting only one vertex $X_{1}$ inside each derived simplex (Frechet multisimplex $F\Delta^{N}$ ) having degree $N.$
\end{remark}

\begin{definition}A Fermat-Frechet multispanning tree is a Fermat-Frechet multitree with zero interior vertices ($m=0$) at each simplex, which belongs to the Frechet multisimplex.
\end{definition}

In \cite[6.2, pp.~157-158]{Gromov:87}, Gromov constructed a geodesic tree ($\log_{2}N$  spanning geodesic tree)  $\tau ]subset X,$ for a finite subset $\{V=V_{1},V_{2},\cdots, V_{N}\}$ of a geodesic $\delta$ hyperbolic space $\mathbb{X}$ with following properties:

(1) $\tau$ is a union of at most $N-1$ geodesic segments in $\mathbb{X}$ ,\\

(2) the set of extremal points of $\tau$ equals $V,$\\

(3) Every two points $V_{i},$ $V_{j}$ in $V$ can be joined by a broken geodesic g having $k\le 1+2 log_{2}N$ segments, such that:

\[length g \le |V_{i}-V_{j}|+ C \delta (log_{2}(N))^{2},\] for $C\le 100.$

In \cite{Bowditch:91}, \cite{Bowditch:06}, Bowditch obtained another tree-like structure of a $\delta $ hyperbolic space $\mathbb{X}$ in the sense of Gromov, by determining an isometry of a spanning tree up to an additive constant, which depend on $N$ and a constant $\delta,$ which lead to an isometric embedding of "logarithmic ($\log N$) spanning trees".

\begin{lemma} {Isometric embedding of a spanning tree up to an additive constant,  \cite[Proposition~6.7,pp.~49-51]{Bowditch:06}}
There is a function $h:\mathbb{N}\to [0,\infty)$ such that if $F\subset \mathbb{X}$ with $|F|=\mathbb{N},$ then there is a (spanning) tree $\tau,$ such that for all $X,Y\in F,$ $d_{\tau}(X,Y)\le XY+\delta h(N),$ ($d_{\tau}$ is distance measured in $\tau $ and all the edges of $\tau $ are geodesic segments).
\end{lemma}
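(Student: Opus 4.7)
The plan is to induct on the cardinality $N$ of the finite set $F$, building the spanning tree $\tau$ by attaching points one at a time while controlling the accumulated distortion via the $\delta$-hyperbolicity of $\mathbb{X}$. For $N\le 2$ the statement is trivial with $h(N)=0$, since we may take $\tau$ to be the (degenerate or single) geodesic segment joining the points of $F$. The inductive hypothesis is that for every set of at most $N-1$ points one can build a spanning tree with tree-distance exceeding ambient distance by at most $\delta h(N-1)$.

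For the inductive step I would use a divide-and-conquer strategy in the spirit of Gromov's original construction in \cite{Gromov:87}. First, select two points $A,B\in F$ whose distance realizes (up to a universal constant) the diameter of $F$, and include the geodesic segment $[A,B]$ in $\tau$. Then partition $F\setminus\{A,B\}$ into two subsets $F_A$ and $F_B$ according to which of $A$ or $B$ has the larger Gromov product with each remaining point (based at the other endpoint). The $\delta$-thin triangles condition gives, for every $V\in F\setminus\{A,B\}$, an approximate projection $\pi(V)$ onto $[A,B]$ together with the inequality
\[
d(V,A)+d(V,B) \;\ge\; d(A,B) + 2\,d(V,\pi(V)) - 2\delta.
\]
Apply the inductive hypothesis to $F_A\cup\{A,\pi_A\}$ and $F_B\cup\{B,\pi_B\}$, where $\pi_A,\pi_B$ are appropriately chosen points of $[A,B]$, and glue the two subtrees to $[A,B]$ at these attachment points.

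The key estimate is that the path in $\tau$ between any two points $X,Y\in F$ differs from the direct geodesic $XY$ by a controlled amount at each level of the recursion. Using the four-point characterization of hyperbolicity
\[
d(x,y)+d(z,w) \;\le\; \max\{\,d(x,z)+d(y,w),\; d(x,w)+d(y,z)\,\} + 2\delta,
\]
each time the recursion descends one level the tree path acquires an extra $O(\delta)$ contribution beyond the straight geodesic. Because one can arrange the splitting to roughly halve the point set at each step, the recursion has depth $O(\log N)$, yielding $d_{\tau}(X,Y) \le d(X,Y) + C\delta\log N$; we then define $h(N)$ to be any function majorizing $C\log N$ (Bowditch in fact takes a somewhat larger function to absorb boundary effects). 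Gromov's stronger logarithmic bound recalled earlier in the excerpt — number of segments $\le 1+2\log_{2}N$ and length increment $\le C\delta(\log_{2}N)^{2}$ — can be viewed as an explicit instantiation of this same scheme.

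The main obstacle is combinatorial rather than metric: one must verify simultaneously that (i) the union of the two recursively constructed subtrees with $[A,B]$ is actually a tree (no cycle is created through the attachment points $\pi_A,\pi_B$), and (ii) the additive error does not blow up multiplicatively across the $\log N$ levels. This is handled by choosing the splitting so that the relevant Gromov products exceed a fixed multiple of $\delta$, which guarantees a well-defined \emph{centroid} region along $[A,B]$ for each subset and separates the two subtrees by more than $\delta$; the thin-triangles inequality then shows that replacing a direct geodesic by a path through the centroid contributes only a single $O(\delta)$ term per recursion level, making the total additive error of the desired form $\delta h(N)$.
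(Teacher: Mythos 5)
First, a point of comparison: the paper itself offers no proof of this lemma --- it is quoted, with a citation, from Bowditch's course notes (Proposition~6.7), so there is no internal proof to measure your attempt against. Bowditch's actual argument there is a plain induction that adds one point of $F$ at a time, joining each new point to a suitable vertex of the already-constructed tree by a geodesic and invoking the thin-triangles (or four-point) condition once per step; this produces a bound of the form $\delta h(N)$ with $h$ of no particular growth rate, which is all the lemma asserts.

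Your proposal instead goes after the stronger logarithmic bound, and that is where the genuine gap sits. The divide-and-conquer step does not close as written: the partition of $F\setminus\{A,B\}$ by Gromov products based at the endpoints of a diametral pair can be completely unbalanced (every remaining point may land in $F_A$), so (i) the recursive input $F_A\cup\{A,\pi_A\}$ can again have $N$ points and the induction on cardinality does not terminate as set up, and (ii) the recursion depth is not $O(\log N)$ but can be linear in $N$, so the claimed estimate $d_\tau(X,Y)\le d(X,Y)+C\delta\log N$ is unjustified. Arranging a balanced split is precisely the nontrivial combinatorial content of Gromov's construction in 6.2 of his hyperbolic groups paper, and you assert it (``one can arrange the splitting to roughly halve the point set'') rather than prove it. For the lemma as actually stated none of this machinery is needed: since $h$ may be an arbitrary function of $N$, the one-point-at-a-time induction already suffices, each attachment contributing at most a fixed multiple of $\delta$ to the distortion of any pair, so even a linear or exponential $h$ is acceptable. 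A secondary remark: your worry about cycles through the attachment points is moot if, as in Bowditch, $\tau$ is an abstract simplicial tree whose edges are mapped to geodesic segments --- the tree structure is combinatorial and $d_\tau$ is measured in the tree, so no embedding or separation argument is required.
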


\begin{lemma}{Isometric embedding of a  spanning tree up to an additive logarithmic constant, \cite[Proposition~7.3.1, Theorem~7.6.1]{Bowditch:91}}
There is a function $f:\mathbb{N}\to \mathbb{R},$ such that the following holds.
Suppose ($\mathbb{X},d)$ is a $\delta$ hyperbolic geodesic space and the vertex set $V\subset \mathbb{X}$ is a set of $(N+1)$points. Then there is an immersed spanning tree $\tau$ for $V$ in $\mathbb{X},$ such that for any $X,Y\in V,$
we have
\[\rho_{\tau}(X,Y))\le d(X,Y)+hF(N),\] where $F(N)=O(log N).$
\end{lemma}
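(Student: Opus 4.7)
The plan is to exploit the thin-triangle property of $\delta$-hyperbolic spaces together with a divide-and-conquer construction that produces a spanning tree of logarithmic depth, thereby bounding the cumulative projection error by $O(\delta\log N).$ First I would fix an ordering of the vertex set $V=\{V_{0},V_{1},\cdots,V_{N}\}$ (for instance by distance to a base-point $V_{0}$) and then proceed recursively: at each stage, the current subset $W\subseteq V$ is partitioned into two halves $W_{1},W_{2}$ of cardinalities $\lceil|W|/2\rceil$ and $\lfloor|W|/2\rfloor$ via a geodesic segment $[P,Q]$ joining two ``median'' points of $W,$ and every remaining vertex of $W$ is connected by a geodesic segment to its nearest-point projection on $[P,Q].$ The recursion is then applied to $W_{1}$ and to $W_{2}$ independently, and terminates after $O(\log N)$ levels since each step halves the size of the working set.

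Next I would bound the distortion using the four-point characterisation of hyperbolicity. The defining property that any side of a geodesic triangle in $\mathbb{X}$ is contained in the $\delta$-neighbourhood of the union of the other two implies that two nearest-point projections onto a fixed geodesic $[P,Q]$ agree with the true ambient distance up to an additive error of order $\delta$; this is the standard ``tripod'' approximation for four points in a hyperbolic space. For any pair $X,Y\in V,$ the unique path from $X$ to $Y$ in the tree $\tau$ traverses the recursion levels at which $X$ and $Y$ first lie in distinct halves, so the error $\rho_{\tau}(X,Y)-d(X,Y)$ is a sum of at most $O(\log N)$ individual $O(\delta)$ contributions. This yields the desired estimate $\rho_{\tau}(X,Y)\le d(X,Y)+F(N)$ with $F(N)=O(\log N),$ absorbing the uniform multiplicative factor $\delta$ into $F.$

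Finally I would verify that $\tau$ is an immersed spanning tree with extremal set $V:$ every element of $V$ is attached at some recursion level, the partitioning at each stage is into disjoint halves so no cycle is ever created, and the edges are by construction geodesic segments of $\mathbb{X}.$ The main obstacle I expect is controlling the projection step uniformly: for the $O(\delta)$ bound on a single projection to hold, the chosen geodesic $[P,Q]$ must be ``long enough'' relative to the diameter of $W$ and the projections must themselves lie in a controlled region of $[P,Q].$ Bowditch handles this delicate point by expressing the whole configuration in terms of Gromov products and checking the four-point inequality simultaneously for every relevant quadruple $\{X,Y,P,Q\}\subset W,$ and I would follow the same strategy, invoking the four-point characterisation of hyperbolicity at each recursion step rather than reproving the tripod lemma from scratch.
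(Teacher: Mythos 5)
The first thing to note is that the paper does not prove this lemma at all: it is imported verbatim from Bowditch \cite{Bowditch:91} (Proposition~7.3.1, Theorem~7.6.1) as a black box, to be invoked later in the proof of Theorem~\ref{thmembedmultitree}. There is therefore no internal proof to compare against, and your proposal has to be measured against the standard argument in the literature (Gromov's construction in 6.2 of \cite{Gromov:87}, Bowditch's notes, or the tree-approximation lemma of Ghys--de la Harpe). Against that benchmark your overall strategy --- a divide-and-conquer recursion of depth $O(\log_{2}N)$ in which each level contributes an additive error of order $\delta$ via the thin-triangle/four-point condition, so that the tree path between $X$ and $Y$ accumulates at most $O(\log N)$ contributions of size $O(\delta)$ --- is exactly the right one, and the direction of the estimate ($\rho_{\tau}\ge d$ automatically since the edges are geodesics, so only the upper bound has content) is handled correctly.

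However, the construction as you literally describe it does not produce a tree. At a given stage you both (i) connect \emph{every} vertex of $W$ to its nearest-point projection on the geodesic $[P,Q]$, and (ii) recurse on the two halves $W_{1},W_{2}$, adding further edges inside each half. Step (i) already makes $W\cup[P,Q]$ connected, so every additional edge supplied by the recursion closes a cycle; conversely, if the recursive edges are meant to replace the top-level projection edges, then those projections play no role and the error analysis attached to them is vacuous. The standard repair --- and what Bowditch actually does --- is to build the tree bottom-up: split an ordered chain of the points into two consecutive halves, obtain by induction an approximating tree for each half, and glue the two subtrees at a single point determined by a Gromov product, paying an additive $2\delta$ once per level of the recursion rather than once per projection. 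A second point that needs to be made explicit is the case split in the single-level estimate: the detour through $[P,Q]$ overestimates $d(X,Y)$ by only $O(\delta)$ after separating the cases where the two projection points are at distance at least $C\delta$, respectively less than $C\delta$, along $[P,Q]$; without that case distinction the ``tripod approximation'' you invoke is not literally true. With these two repairs your sketch becomes a correct proof of the quoted statement.
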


We proceed by obtaining an embedding of Fermat-Steiner Frechet multitree to $\Delta^{N}$, taking into consideration Gromov's observation that the regular ideal simplex having the maximal volume corresponds to a tree $S$ having a maximal length metric among the Fermat-Steiner trees, which consists of the union of $N+1$ rays joining the ideal points $A_{i}^{\circ}$ with only a single point $X_{1}\in \mathbb{H}^{N}$      (\cite[Property (d),p.~233]{Gromov:87}).

We consider an example of a tree $S$ for a boundary tetrahedron in $\mathbb{R}^{3}.$ We show that we can perturb the maximal length (metric) of the unweighted Fermat- Steiner trees, by assigning properly some weights at each vertex of the tetrahedron, which yields a  maximal weighted length metric of the Fermat tree greater than the maximal unweighted length metric of the weighted Fermat tree having the same geometric structure (tree topology).

\begin{example}
Let $A_{1}A_{2}A_{3}A_{4}$ be a tetrahedron and $X_{1}$ be the corresponding Fermat point in $\mathbb{R}^{3}.$
One can rotate by a suitable angle (twist angle), such that $A_{1}A_{2}A_{3}A_{4}$ and $X_{1}$ lie on the same plane. Thus, $X_{1}$ is the intersection of the diagonals $A_{1}A_{3}$ and $A_{2}A_{4}.$ One can assign the following weights $B_{i}$ (positive real numbers) at each vertex $A_{i}$ (Fig.~\ref{figg1}).
The length of the unweighted ($B_{i}=1$) Fermat tree is given by
\[length g = A_{1}A_{3}+A_{2}A_{4}.\]
We can perturb the length metric of the Fermat tree, by assigning the following weights at each $A_{i},$ such that the weighted Fermat point $X_{1}$ remains the same:
\[B_{1}=B_{3}=1+\epsilon,\ B_{2}=B_{4}=1-\epsilon.\]
\[length g_{p} = (1+\epsilon)A_{1}A_{3}+(1-\epsilon)A_{2}A_{4}.\]

Hence, we obtain the inequality for $\epsilon >0$ and $a_{13}>a_{24}$ or $\epsilon <0$ and $a_{13}<a_{24}:$
\[length g_{p}-length g=\epsilon (a_{13}-a_{24})>0.\]

\begin{figure}
\centering
\begin{center}
\includegraphics[scale=0.90]{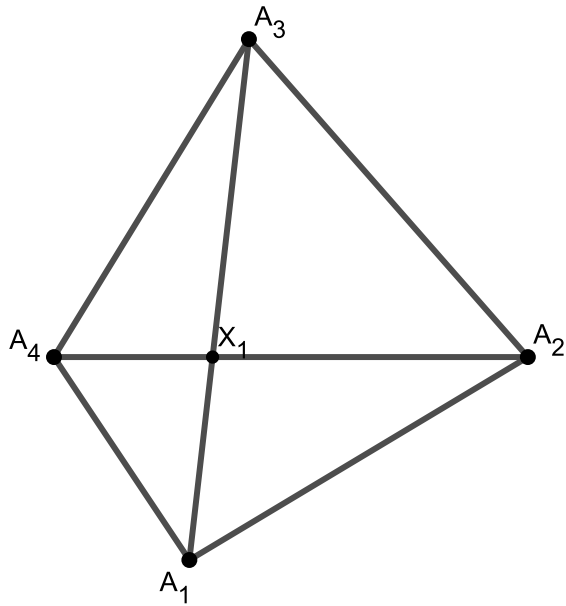}
\caption{ Perturbing the Fermat length metric for $A_{1}A_{2}A_{3}A_{4}$ with weights $B_{1}=B_{3}=1+\epsilon,\ B_{2}=B_{4}=1-\epsilon$ } \label{figg1}
\end{center}
\end{figure}

\end{example}


\begin{theorem}\label{thmembedmultitree}
The embedding of an intermediate Fermat-Steiner Frechet multitree $MS\to F\Delta^{N}$ is a collection of isometries up to an additive constant, where the implied constants only depends on $N$ and $\epsilon.$
\end{theorem}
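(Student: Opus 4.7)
The plan is to reduce the multitree statement to the one-simplex case and then invoke the Bowditch isometric embedding lemmas quoted immediately before the theorem. By Definition \ref{FSFtree}, the intermediate Fermat-Steiner Frechet multitree $MS$ decomposes as an indexed union $MS=\bigcup_{i}S_{i}$, where each $S_{i}$ is the intermediate Fermat-Steiner tree associated with the $i$-th incongruent $N$-simplex $\Delta_{i}^{N}$ constituting the Frechet multisimplex $F\Delta^{N}$. Correspondingly the inclusion map $MS\to F\Delta^{N}$ decomposes as the collection of inclusions $S_{i}\hookrightarrow\Delta_{i}^{N}$, so it suffices to show that each such inclusion is a Gromov $(+)$-isometry (an isometry up to an additive constant) with constants depending only on $N$ and $\epsilon$.

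First I would fix a simplex $\Delta_{i}^{N}=\{A_{1},A_{2},\ldots,A_{N}\}$ whose edge lengths $a_{jk}$ satisfy the Dekster-Wilker hyperbolic inequalities, so that $\Delta_{i}^{N}$ is well defined and incongruent to the other members of the Frechet family. The ambient space $\mathbb{H}_{-\epsilon^{2}}^{N}$ is a $\delta$-hyperbolic geodesic space with a hyperbolicity constant $\delta=\delta(\epsilon)$ depending only on the curvature parameter $\epsilon$. I would then apply Bowditch's isometric embedding lemma (Proposition~6.7 of \cite{Bowditch:06}, or Proposition~7.3.1 and Theorem~7.6.1 of \cite{Bowditch:91}) to the finite vertex set $V=\{A_{1},A_{2},\ldots,A_{N}\}$: this supplies a spanning tree $\tau_{i}$ of $V$ such that $d_{\tau_{i}}(X,Y)\le d(X,Y)+\delta\,h(N)$ for every $X,Y\in V$, where $h:\mathbb{N}\to[0,\infty)$ is a universal function of $N$ alone (with $h(N)=O(\log N)$ in Bowditch's logarithmic version). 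The reverse inequality $d(X,Y)\le d_{\tau_{i}}(X,Y)$ is automatic because $\tau_{i}$ lies in the geodesic ambient space.

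Next I would bring in the interior Fermat-Steiner vertices $X_{1},\ldots,X_{m}$ with $m\le N-2$ that distinguish $S_{i}$ from a plain spanning tree. By the Fermat-Steiner length-minimizing property, together with Gromov's canonical inclusion properties (a)--(e) of \cite[p.~233]{Gromov:87}, inserting Steiner points into a spanning configuration can only decrease the total tree length; in particular, the tree distance between any two boundary vertices does not increase. Thus the Bowditch upper estimate transfers from $\tau_{i}$ to $S_{i}$, giving $d_{S_{i}}(A_{j},A_{k})\le d(A_{j},A_{k})+\delta(\epsilon)\,h(N)$. The Dekster-Wilker domain $DW_{\mathbb{H}_{K}^{N}}(K\ell,Ks)$ bounds the edge lengths $s\le a_{jk}\le\ell$ uniformly across the finitely many incongruent simplexes, so the pair $(N,\epsilon)$ yields the additive constant $\delta(\epsilon)h(N)$ uniformly in the index $i$. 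Taking the union over $i$ produces the desired collection of $(+)$-isometries $S_{i}\hookrightarrow\Delta_{i}^{N}$ with constants depending solely on $N$ and $\epsilon$.

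The main obstacle is the Steiner-point step. Bowditch's lemma is stated for spanning trees on a prescribed finite vertex set, whereas an intermediate Fermat-Steiner tree carries additional interior vertices of degrees between $3$ and $N$ whose positions are determined by the weighted Fermat-Steiner variational problem. The nontrivial point is to verify that the length-minimizing Steiner geometry genuinely refines, rather than worsens, the Bowditch spanning-tree estimate, and that this refinement holds uniformly as $\Delta_{i}^{N}$ varies over the Frechet multisimplex. Once that uniform refinement is in place, the dependence of the additive constant on $N$ and $\epsilon$ follows directly from the $\delta$-hyperbolicity of $\mathbb{H}_{-\epsilon^{2}}^{N}$ and from the universal growth rate of Gromov's and Bowditch's function $h$.
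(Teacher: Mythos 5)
Your argument coincides with the paper's own second proof (``Proof without using weights''): both decompose $MS$ into the trees $S_{i}$ over the incongruent simplexes of the Dekster--Wilker family, invoke the Gromov--Bowditch spanning-tree estimate $d_{\tau}(X,Y)\le d(X,Y)+C\delta(\log_{2}N)^{2}$, and then pass to the Fermat--Steiner tree via the Ivanov--Tuzhilin--Cieslik comparison $\mathrm{length}(S_{i})\le\mathrm{length}(\tau_{i})$, with the dependence on $N$ and $\epsilon$ coming from $h(N)$ and the hyperbolicity constant $\delta(\epsilon)$. The only difference is that the paper additionally sketches an alternative argument (its ``Proof using weights'') that perturbs the maximal length metric by assigning weights $B_{1},\dots,B_{N+1}$ to the vertices, which you do not need; note also that the obstacle you honestly flag --- that a decrease in \emph{total} tree length does not by itself control the \emph{pairwise} tree distances $d_{S_{i}}(A_{j},A_{k})$ --- is present in exactly the same form in the paper's proof, which likewise compares only total lengths.
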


\begin{proof}

A. A Proof using weights

We can assign weights $B\{_{1}, B_{2},\cdots B_{N+1}\}$ to each vertex of the simplex $A_{1}A_{2}\cdots A_{N+1}\in \mathbb{H}_{-\epsilon^{2}}^{N},$ to perturb the corresponding maximal length metric of the Fermat-tree, which gives an isometry up to an additive $\log_{2}N$ or $\log N$ constant due to Gromov, Bowditch for a given $\frac{N(N+1}{2}$tuple of edge lengths determining a simplex by using the solution domain of Dekster-Wilker DW($\mathbb{H}_{\epsilon^{2}}^{N}$). Thus, the collection of the isometries up to an additive constant depend on $\epsilon$ and $N$ and gives a "multi-"inclusion map of intermediate Fermat-Steiner-Frechet multitrees $MS$ to a Frechet multisimplex $F\Delta^{N}.$

B. Proof without using weights

We use the following result for the length metric of minimal Steiner trees and associated minimal spanning trees, which has been proved by Ivanov Tuzhilin and Cieslik for manifolds (\cite{IvanovTuzhilinCieslik:03}, \cite{IvanovTuzhilin:01a}).
The length of the Fermat-Steiner tree with intermediate vertices is less than length of the spanning tree with zero intermediate vertices.
Therefore, by applying Gromov-Bowditch constructions, we get
\[length(Fermat\ Steiner\ tree)<length{Spanning \ tree}\le |V_{i}-V_{j}|+ C \delta (log_{2}(N))^{2},\]
for $C\le 100.$

\end{proof}

We define a dynamic intermediate Fermat-Steiner Frechet multitree for some perturbed hyperbolic Frechet multsimplexes and we consider emmbeddings of dynamic intermediate Fermat-Steiner Frechet multitrees to almost regular Frechet multisimplex in terms of Dekster's lower bound estimate for the generalized Santalo's thickness of simplexes in $\mathbb{H}_{-\epsilon^{2}}^{N}$ (\cite[Theorem~2 (2.3),pp.51~52]{Dekster:97}).


\begin{definition}[A dynamic intermediate Fermat-Steiner Frechet multitree]\label{FSFtree}
A dynamic intermediate Fermat-Steiner-Frechet multitree is a union of Fermat-Steiner-Frechet multitrees, which are derived by a given $\frac{N(N+1)}{2}-$tuple of positive real numbers determined by the edge lengths $\ell_{i}$ of a Frechet multisimplex in $\mathbb{H}_{k}^{N}$ perturbed by a real number $\epsilon_{i}:$ $\ell_{i}^{\prime}=\ell_{i}+\epsilon_{i}:$
\[\sum_{i=1}^{\frac{N(N+1)}{2}}\ell_{i}^{\prime}=\sum_{i=1}^{\frac{N(N+1)}{2}}\ell_{i}^{\prime},\]
where $\sum_{i=1}^{\frac{N(N+1)}{2}}\epsilon_{i}=0$ and $\ell_{i}^{\prime}$ are the edge lengths of a Frechet
simplex obtained by the Dekster Wilker solution domain $DW( \mathbb{H}_{k}^{N} ).$

\end{definition}

We note that Dekster (\cite[p~52, (1.14), Example,pp.~53-54]{Dekster:97}) extended Gromov's thinness of geodesic triangles in $\delta=\frac{1}{2\epsilon}\log 3$ hyperbolic spaces (\cite[6.3, Lemma~6.3.A]{Gromov:87},\cite[Example~4,p.~54]{Bowditch:06}), by obtaining a generalization of Santalo's thickness from $\mathbb{H}^{2}$ to $\mathbb{H}^{N}$ and distinguise the thickness of simplexes of a compact convex set $C$ (simplex) )$t_{g}(C)$ to the general thickness $t_{g}(C)$ and the normal thickness $t_{N},$ which yields $t_{N}(C)\ge t_{g}(C).$

\begin{lemma}{\cite[Theorem~2, (2.3), pp.~54-55]{Dekster:97}}
Let $\Delta^{N}$ be an $N-$simplez in $\mathbb{H}^{N}$ with edges $a_{ij}\in [s,\ell]\subset [\lambda_{N}(\ell),\ell$ where $\ell>0$ and $\ell_{N}(\ell)$ is given by
\[\cosh \lambda_{N}(\ell)=\sqrt{\cosh^{2}\ell -2 f_{N}(\cosh \ell -1)[1+\frac{N}{N+1}(\cosh \ell)-1,] }\]

where

\[ f_{N})= \left\{
\begin{array}{ll}
      \frac{2}{N+1} & for\ odd\ N, \\
      \frac{2(N+1)}{N(N+2)} & for\ even\ N

\end{array}
\right.
\]

Then the general thickness $t_{g}$ of $\Delta^{N}$ satisfies:
\[t_{g}\ge \cosh^{-1}(1+\frac{\cosh s-\cosh \lambda_{N}(\ell)}{\cosh^{2}\ell}).\]
\end{lemma}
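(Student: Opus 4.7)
The plan is to reduce the bound on the general thickness $t_g(\Delta^N)$ to a uniform lower bound on the hyperbolic altitudes of $\Delta^N$, and then to establish those altitude estimates by computing Gram-type determinants under the edge-length assumption $\lambda_N(\ell)\le s\le a_{ij}\le \ell$.

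First I would recall from Dekster's earlier framework that the general thickness of a compact convex body $C\subset \mathbb{H}^N$ equals the infimum, over pairs of parallel supporting hyperplanes of $C$, of the hyperbolic distance between them. For an $N$-simplex this infimum is attained by a vertex $A_k$ and the hyperplane supporting the opposite face $F_k$, so
\[
t_g(\Delta^N)=\min_{0\le k\le N} h_k, \qquad h_k=\mathrm{dist}_{\mathbb{H}^N}(A_k,F_k).
\]
Hence the task reduces to proving, for every $k$, that
\[
\cosh h_k \ge 1+\frac{\cosh s-\cosh\lambda_N(\ell)}{\cosh^{2}\ell}.
\]

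Next I would express $\cosh h_k$ as a ratio of two hyperbolic Cayley--Menger (Gram) determinants built from the entries $\cosh a_{ij}$, obtained by viewing the altitude as the foot of a perpendicular dropped from $A_k$ onto the affine span of $F_k$; this is the standard hyperbolic analogue of the Euclidean volume-to-altitude formula. I would then estimate this ratio from below by exploiting the monotonicity of the cofactors of the Gram matrix in each $\cosh a_{ij}$, a computation already implicit in Dekster--Wilker's work on incongruent hyperbolic simplexes. Substituting $\cosh s$ for every off-diagonal entry involving $A_k$ and $\cosh\ell$ for every entry lying inside $F_k$ isolates the extremal configuration. The critical algebraic identity is that when $s=\lambda_N(\ell)$ the numerator degenerates to $\cosh^{2}\ell$ times the face Gram determinant, which is precisely the content of the defining equation of $\lambda_N(\ell)$, yielding $\cosh h_k=1$ in the degenerate case and the displayed quantitative bound otherwise.

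The hard part will be twofold: (i) showing that among Gramians with entries $\cosh a_{ij}\in[\cosh s,\cosh \ell]$ the altitude is minimized (or at least admits a tractable lower bound) at the configuration where the $N$ edges incident to $A_k$ equal $s$ and the edges of $F_k$ equal $\ell$; and (ii) verifying that this extremal configuration's altitude matches the right-hand side of the claim, which is exactly what the formula for $\cosh\lambda_N(\ell)$ is engineered to ensure. The even/odd split in $f_N$ (with coefficients $\tfrac{2}{N+1}$ versus $\tfrac{2(N+1)}{N(N+2)}$) reflects the spectral decomposition of the all-equal off-diagonal Gram matrix for a regular $(N-1)$-face, where the eigenvalue tied to the symmetric direction behaves differently according to the parity of $N$; step (ii) is where these parity-dependent constants emerge, matching Dekster's original derivation.
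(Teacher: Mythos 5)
The paper does not prove this lemma at all: it is quoted (as an external result) from Dekster, \cite[Theorem~2]{Dekster:97}, so there is no internal proof to compare against; the only meaningful comparison is with Dekster's original argument.

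Your plan fails at the very first reduction. You assert that the general thickness of an $N$-simplex equals $\min_k h_k$, the minimum distance from a vertex to the hyperplane of its opposite facet, and then propose to lower-bound each $h_k$ via hyperbolic Cayley--Menger determinants. But the minimal width of a simplex need not be realized by a vertex--facet pair: already for a regular tetrahedron of edge $1$ in $\mathbb{R}^{3}$ every altitude equals $\sqrt{2/3}\approx 0.816$, while the width in the direction of the common perpendicular of two opposite edges is $1/\sqrt{2}\approx 0.707$, realized by two parallel supporting planes each containing an edge. In general one only has $t_{g}(\Delta^{N})\le \min_k h_k$, so a lower bound on the altitudes gives no lower bound on $t_{g}$; your inequality points the wrong way. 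The surrounding text of the paper itself flags this, distinguishing the \emph{general} thickness $t_{g}$ from the \emph{normal} thickness $t_{N}$ with $t_{N}(C)\ge t_{g}(C)$: the general thickness is an infimum over all pairs of parallel supporting hyperplanes (equivalently, over orthogonal projections onto arbitrary geodesics), most of which are not normals to facets. Dekster's actual proof bounds from below the length of the projection of the vertex set onto an arbitrary geodesic, using the constraints $\lambda_{N}(\ell)\le s\le a_{ij}\le \ell$ together with the circumradius estimates behind the Dekster--Wilker existence conditions \cite{DeksterWilker:91a}; that is where the parity-dependent constant $f_{N}$ enters, not through a spectral decomposition of a facet Gram matrix. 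Even if your determinant computation in steps (i)--(ii) were carried out correctly, it would bound a normal-thickness-type quantity, not $t_{g}$, so the argument as planned cannot establish the stated inequality.
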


\begin{theorem}[Isometric embedding of a dynamic Fermat-Steiner-Frechet multitree to a Frechet almost regular multisimplex]\label{Gromovdynamicembedding}
The embedding of a dynamic intermediate Fermat-Steiner Frechet multitree $MS\to F\Delta^{N},$ $F\Delta_{\epsilon_{i}}^{N}$ is a family of almost regular multiFrechet simplexes for $s=\ell -\max \{|\epsilon_{1}|,\cdots |\epsilon_{i}|\}$ very close to $\ell$, which gives a collection of isometries up to an additive constant, where the implied constants only depends on $N$ and $\epsilon,$ such that
$t_{g}(F\Delta ^{N})\ge \frac{1}{2\epsilon}\log 3$ for $s \le a_{ij}\le \ell$ and $(s,\ell)\in DW_{\mathbb{H}_{-\epsilon^{2}}^{N}}.$
\end{theorem}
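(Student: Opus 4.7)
The plan is to reduce this theorem to Theorem~\ref{thmembedmultitree} applied pointwise along the isoperimetric perturbation family, using Dekster's thickness estimate to guarantee that the resulting Frechet multisimplexes stay in an \emph{almost regular} regime where the Gromov--Bowditch additive constant can be controlled uniformly in the perturbation parameter.

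First, I would parametrize the perturbation data: fix the base $\frac{N(N+1)}{2}$-tuple $\{\ell_i\}$ and the admissible set of perturbations $\{\epsilon_i\}$ with $\sum_i \epsilon_i = 0$, so by Definition~\ref{FSFtree} each perturbed tuple $\ell_i' = \ell_i + \epsilon_i$ again yields a Frechet multisimplex $F\Delta_{\epsilon_i}^N$ whose edge lengths satisfy $s \le a_{ij} \le \ell$ with $s = \ell - \max_i|\epsilon_i|$. Taking $\max_i|\epsilon_i|$ small, one checks that $(s,\ell)\in DW_{\mathbb{H}^{N}_{-\epsilon^2}}$, so the multisimplex is well defined and its edges lie in the narrow interval $[s,\ell]\subset[\lambda_N(\ell),\ell]$ required by Dekster's lemma.

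Next I would invoke Dekster's lower bound
\[
t_g(F\Delta^N) \;\ge\; \cosh^{-1}\!\Bigl(1 + \frac{\cosh s - \cosh \lambda_N(\ell)}{\cosh^{2}\ell}\Bigr),
\]
and observe that since $s$ is very close to $\ell$ and $\lambda_N(\ell)<\ell$ strictly, the right-hand side is bounded below by a quantity depending only on $N$ and $\epsilon$. Comparing with Gromov's thinness constant $\delta = \tfrac{1}{2\epsilon}\log 3$ for geodesic triangles in $\mathbb{H}^{N}_{-\epsilon^2}$, I would choose the admissible perturbation size so that the resulting thickness satisfies $t_g(F\Delta^N)\ge \tfrac{1}{2\epsilon}\log 3$; this is what certifies the multisimplex as almost regular in the sense required by the statement.

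Then, for each perturbed multisimplex in the family $\{F\Delta_{\epsilon_i}^N\}$ I would apply Theorem~\ref{thmembedmultitree}: assigning the appropriate weights $\{B_1,\dots,B_{N+1}\}$ to perturb the maximal length metric of each intermediate Fermat-Steiner tree, and then invoking Gromov's $\log_2 N$ spanning tree construction together with Bowditch's additive-constant embedding $d_\tau(X,Y)\le d(X,Y)+\delta\, h(N)$, one obtains an inclusion $MS_{\epsilon_i}\hookrightarrow F\Delta_{\epsilon_i}^N$ that is an isometry up to an additive constant $C(N,\epsilon)$ independent of the particular perturbation. Taking the union over all admissible perturbations $\{\epsilon_i\}$ produces the dynamic intermediate Fermat-Steiner-Frechet multitree, and the collection of these inclusions gives the asserted family of isometries up to an additive constant depending only on $N$ and $\epsilon$.

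The main obstacle I expect is the uniformity of the additive constant across the perturbation family: Bowditch's constant $\delta\, h(N)$ is tied to the Gromov thinness $\delta$ of the ambient hyperbolic space, which is intrinsic, but the perturbed simplexes may degenerate as $\max_i|\epsilon_i|$ grows, eroding the almost-regular regime that feeds Dekster's thickness inequality. Controlling this via the explicit bound $t_g\ge \tfrac{1}{2\epsilon}\log 3$, and translating this thickness into a uniform bound on the Gromov-Bowditch additive term, is the technical heart of the argument; the rest is a bookkeeping assembly of Theorem~\ref{thmembedmultitree} along the isoperimetric family.
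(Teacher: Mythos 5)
Your proposal is essentially sound at the level of rigor the paper itself operates at, and it reduces the statement to Theorem~\ref{thmembedmultitree} applied member-by-member across the isoperimetric perturbation family, which is also the skeleton of the paper's argument. But the geometric mechanism you use is genuinely different from the paper's. The paper's proof proceeds by taking an almost regular member $A_{1}A_{2}\cdots A_{N+1}$ of $F\Delta_{\epsilon_{i}}^{N}$, locating its Fermat point $X_{1}$, and extending the rays $X_{1}A_{i}$ until they meet the sphere at infinity $\partial\mathbb{H}_{-\epsilon^{2}}^{N}$ at points $A_{i}^{\circ}$; this produces an ideal almost regular simplex, to which Gromov's observation (maximal volume of the regular ideal simplex corresponds to a Fermat tree of maximal length metric) and Theorem~\ref{thmembedmultitree} are applied. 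The paper then closes the loop by checking that the Fermat point is unchanged under this projection, since $\sum_{i}\exp_{X_{1}}^{-1}X_{1}A_{i}/|X_{1}A_{i}|=\sum_{i}\exp_{X_{1}}^{-1}X_{1}A_{i}^{\circ}/|X_{1}A_{i}^{\circ}|=0$. You omit this ideal-simplex construction entirely; in exchange, you explicitly verify the thickness claim $t_{g}(F\Delta^{N})\ge\frac{1}{2\epsilon}\log 3$ via Dekster's lower bound, which appears in the statement of the theorem but is never actually addressed in the paper's proof. In that sense each argument covers a part of the conclusion the other leaves implicit: the paper supplies the link to Gromov's maximal-volume picture and the invariance of the Fermat point, you supply the quantitative almost-regularity certificate.

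One caveat on your thickness step: Dekster's lower bound $\cosh^{-1}\bigl(1+\frac{\cosh s-\cosh\lambda_{N}(\ell)}{\cosh^{2}\ell}\bigr)$ tends, as $s\to\ell$, to a quantity determined by $\ell$ and $N$ alone, so whether it exceeds $\frac{1}{2\epsilon}\log 3$ is governed by the size of $\ell$ relative to $1/\epsilon$, not by how small you take $\max_{i}|\epsilon_{i}|$. Your phrase ``choose the admissible perturbation size so that the resulting thickness satisfies the bound'' therefore does not quite work as written; you need the additional hypothesis that $\ell$ is large enough (equivalently, that the simplexes are large at the scale of the curvature $-\epsilon^{2}$) for the comparison with Gromov's thinness constant to go through. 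The paper's statement asserts this inequality without proof, so this is a gap in the theorem as much as in your argument, but if you are going to carry the burden of proving it you should make that dependence explicit.
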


\begin{proof}
Let $A_{1}A_{2}\cdots A_{N+1}$ be an almost regular simplex for $s$ very close to $\ell$ and  $X_{1}$ is the corresponding Fermat point. By taking $X_{1}$ as a reference point, the rays $X_{1}A_{i}$ isuuing from $X_{1}$ intersect $\partial \mathbb{H}_{-\epsilon^{2}}^{N}$ at $A_{i}^{\circ},$ for $i=1,2,\cdots, N,$ which yields that $A_{1}^{\circ}A_{2}^{\circ}\cdots A_{N}^{\circ}$ is an ideal almost regular $N$simplex. By using Gromov's observation that we can assign a Fermat tree having a maximal length metric to each $A_{1}^{\circ}A_{2}^{\circ}\cdots A_{N}^{\circ},$ which has an almost maximal volume and by applying  Theorem~\ref{thmembedmultitree}, we can embed isometrically up to an additive constant to each member ($N$ almost regular simplex) of multiFrechetsimplexes $F\Delta_{\epsilon_{i}}^{N},$ for  $s=\ell -\max \{|\epsilon_{1}|,\cdots |\epsilon_{i}|\}$ very close to $\ell:$ $s\le a_{ij} \le \ell.$

The Fermat point of $A_{1}A_{2}\cdots A_{N+1}$ and $A_{1}^{\circ}A_{2}^{\circ}\cdots A_{N+1}^{\circ}$ remains the same, because \[\sum_{i}^{N+1}\frac{\exp_{X_{1}}^{-1}X_{1}A_{i}}{|X_{1}A_{i}|}=\sum_{i}^{N+1}\frac{\exp_{X_{1}}^{-1}X_{1}A_{i}^{\circ}}{|X_{1}A_{i}^{\circ}|}=0.\]

\end{proof}


Let $S$ be an intermediate Fermat-Steiner tree with two intermediate Fermat-Steiner points $X_{1}$ and $X_{2}$ inside the ideal $N$ simplex  $A_{1}^{\circ}\cdots A_{N+1}^{\circ}$ in $\mathbb{H}_{k}^{N}$ having degree $p$ and $(N+2)-p$ and the boundary ideal vertices $A_{i}^{\circ}$ having degree one, for $3\le p\le N-2.$

\begin{lemma}{Convergence of $\Delta^{3}$ \cite[Example,~p.~234]{Gromov:87}}\label{conv3}
If $X_{1}X_{2}\to \infty,$ for $N=3,$ $p=3,$ then $\Delta^{3}$ converges exponentially fast converges to the union of two triangles spanned by the triples $\{A_{1}^{\circ},A_{2}^{\circ},\frac{X_{1}+X_{2}}{2}\}$ and $\{A_{3}^{\circ},A_{4}^{\circ},\frac{X_{1}+X_{2}}{2}\}.$
\end{lemma}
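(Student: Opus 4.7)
My plan is to work in the normalized upper half-space model of $\mathbb{H}^3$ in which the midpoint $M$ of the geodesic segment $[X_1 X_2]$ is pinned to a fixed basepoint $O$ and the axis through $X_1,X_2$ is a fixed vertical geodesic $\gamma$. Under this normalization, as $L:=|X_1 X_2|\to\infty$ the two Steiner points $X_1$ and $X_2$ recede along $\gamma$ to the ideal endpoints $\xi_\pm=\gamma(\pm\infty)$, while $M$ stays at $O$. All convergence statements will be interpreted in this common chart, so that the assertion ``$\Delta^3$ converges to the union of two triangles'' becomes a precise Hausdorff statement about subsets of $\mathbb{H}^3$.

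The first step is to exploit the (unweighted) Fermat--Steiner condition at $X_1$ and $X_2$: at each interior node the three geodesic rays meet pairwise at angle $2\pi/3$. Since one ray at $X_i$ runs along $\gamma$, the other two are pinned to the two half-planes through $\gamma$ at the prescribed $2\pi/3$ angle. Tracing these rays to $\partial\mathbb{H}^3$ by the explicit half-space formula for geodesics, a ray issuing from the point $(0,0,e^{L/2})$ at fixed outward angle meets the boundary plane within Euclidean distance $O(e^{-L/2})$ of a fixed limit $A_i^{\infty}$. Hence the four ideal vertices of $\Delta^3$ converge to well-defined limits $A_1^{\infty},A_2^{\infty}$ in the upper direction and $A_3^{\infty},A_4^{\infty}$ in the lower direction at exponential rate $e^{-L/2}$.

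The second step is to upgrade this vertex convergence to Hausdorff convergence of the convex hull $\Delta^3$ to the union $T_+\cup T_-$, where $T_+=\operatorname{conv}(A_1^{\infty},A_2^{\infty},M)$ and $T_-=\operatorname{conv}(A_3^{\infty},A_4^{\infty},M)$ are the two partially ideal triangles of the limit. Of the six edges of $\Delta^3$, the two intra-cluster edges $A_1^{\circ}A_2^{\circ}$ and $A_3^{\circ}A_4^{\circ}$ lie essentially inside $T_+$, $T_-$ by the first step. The four cross edges $A_i^{\circ}A_j^{\circ}$ with $i\in\{1,2\}$, $j\in\{3,4\}$ are controlled by the Gromov thinness of the quadrilateral $A_i^{\circ}X_1X_2A_j^{\circ}$: each cross geodesic is $\delta$-close to the broken path $A_i^{\circ}\to X_1\to X_2\to A_j^{\circ}$, and a direct check in the half-space model will show that the lowest point of the Euclidean semicircle representing this geodesic sits above $M$ modulo a horizontal error of order $e^{-L/2}$. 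Propagating this estimate across the convex hull via the 1-skeleton (every point of an ideal simplex is within the thinness constant of one of its edges) gives the Hausdorff bound.

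The main obstacle I foresee is sharpening the thinness estimate for the cross edges from the uniform $\delta$-bound to the asserted exponential rate $O(e^{-L/2})$. I plan to extract this by projecting each cross edge onto the 2-plane through $\gamma$ containing it, reducing the question to an ideal right-angled pentagon in $\mathbb{H}^2$, and then applying the classical trigonometric identity $\sinh a\,\sinh b=\cosh c$ to read off the exponential contraction at the waist of the pentagon. Once the $e^{-L/2}$ decay of the cross geodesics to $M$ is in hand, the Hausdorff convergence $\Delta^3\to T_+\cup T_-$ at the same rate follows immediately, establishing the lemma.
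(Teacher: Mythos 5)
First, a point of reference: the paper does not prove this lemma at all --- it is quoted directly from Gromov \cite[Example, p.~234]{Gromov:87} and used as a black box --- so your proposal is supplying an argument the paper never gives. Unfortunately, Step~1 of your plan fails. In your normalization (midpoint $M$ pinned at $(0,0,1)$, axis $\gamma$ vertical, $X_1=(0,0,e^{L/2})$, $X_2=(0,0,e^{-L/2})$), a geodesic ray issuing from height $t$ at a fixed angle $\theta\neq 0$ from the vertical lands on the boundary plane at Euclidean distance proportional to $t$ from the axis (for instance $\sqrt{3}\,t$ when $\theta=\pi/3$), not within $O(e^{-L/2})$ of a fixed point. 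Consequently $A_1^{\circ},A_2^{\circ}$ both escape to the upper ideal endpoint $\xi_+$ of $\gamma$ and $A_3^{\circ},A_4^{\circ}$ both collapse to $\xi_-$; there are no well-defined distinct limits $A_i^{\infty}$, and the ``limit triangles'' $T_{\pm}$ degenerate to the rays $[M,\xi_{\pm}]$. This is not an artifact of the chart: if some normalization made the four ideal vertices converge to four distinct points, the simplices would converge to a nondegenerate ideal simplex whose invariant $|X_1X_2|$ is finite, contradicting $L\to\infty$ (equivalently, the cross-ratio of the four ideal points degenerates). So the lemma cannot be read as convergence to a fixed configuration; it is a statement that the Hausdorff distance between the two \emph{varying} sets $\Delta^{3}$ and $T_{+}\cup T_{-}$ (note $T_{+}\cup T_{-}\subset\Delta^{3}$ automatically, by convexity) tends to zero exponentially in $L$, and the whole proof must be organized around that relative estimate.

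The correct quantitative core is available and close to what you gesture at in Step~2, but not in the form you state it. Placing the ideal vertices at $(\pm e^{L/2},0,0)$ and $(0,\pm e^{-L/2},0)$, the cross geodesic joining $(e^{L/2},0,0)$ to $(0,e^{-L/2},0)$ satisfies $\sinh d(M,\gamma_{\mathrm{cross}})=(e^{L}+e^{-L})^{-1/2}$, so it passes within roughly $e^{-L/2}$ of $M$ and then, by convexity of the distance between two geodesics sharing an ideal endpoint, fellow-travels $[A_i^{\circ},M]\cup[M,A_j^{\circ}]\subset T_{+}\cup T_{-}$ within the same error; your picture of ``the lowest point of the semicircle sitting above $M$'' does not describe this (the extremal point of that semicircle is at Euclidean height about $\tfrac{1}{2}e^{L/2}$ far off the axis) and should be replaced by the computation just given. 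Two further gaps remain even after this repair. The passage from the $1$-skeleton to the full convex hull via ``every point is within the thinness constant of an edge'' only yields a uniform $\delta$-bound, which cannot produce the asserted exponential rate; you need a genuinely three-dimensional estimate, e.g.\ that every point of $\Delta^{3}$ lies within $O(e^{-L/2})$ of the union of the two totally geodesic $2$-planes containing $T_{+}$ and $T_{-}$ and projects into an $O(e^{-L/2})$-neighborhood of $T_{\pm}$ there --- this is where the real content of the lemma sits and it is absent from the proposal. Finally, you impose the $2\pi/3$ Steiner condition at $X_1,X_2$, whereas Gromov's canonical tree for an ideal tetrahedron takes $[X_1,X_2]$ to be the common perpendicular of the opposite edges $(A_1^{\circ}A_2^{\circ})$ and $(A_3^{\circ}A_4^{\circ})$, meeting them at right angles; the conclusion is insensitive to which tree one uses, but your explicit angle computations are tied to the wrong model.
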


\begin{lemma}[Extension of convergence of an $N$simplex to a $p$simplex and $N+3-p$ simplex ]\label{convN}
If $X_{1}X_{2}\to \infty,$ then $\Delta^{N}$ exponentially fast converges to the union of two hyperbolic simplexes spanned by the
$p$-tuple $\{A_{1}^{\circ},A_{2}^{\circ},\cdots,A_{p-1}^{\circ},\frac{X_{1}+X_{2}}{2}\}$ and the\\ $(N+3-p)$-tuple
$\{A_{p}^{\circ},A_{p+1}^{\circ},\cdots,A_{N+2-p}^{\circ},\frac{X_{1}+X_{2}}{2}\},$ respectively.
\end{lemma}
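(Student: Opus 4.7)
The plan is to extend Gromov's convergence argument from Lemma~\ref{conv3} (the base case $N=3$, $p=3$) to the general setting. The key observation is that the intermediate Fermat-Steiner tree $S$ naturally partitions the $N+1$ boundary ideal vertices into two clusters: the $p-1$ vertices $\{A_1^\circ,\ldots,A_{p-1}^\circ\}$ attached to $X_1$, and the $N+2-p$ vertices $\{A_p^\circ,\ldots,A_{N+1}^\circ\}$ attached to $X_2$. The internal edge $X_1X_2$ acts as a bottleneck whose geodesic midpoint $M=\frac{X_1+X_2}{2}$ (the point of $X_1X_2$ equidistant from both endpoints) will serve as the common apex of the two limit subsimplexes.

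First I would fix $M$ as a reference origin and parameterize the geodesic $X_1X_2$ so that $X_1$ and $X_2$ recede symmetrically to infinity along it. I would then exploit the $\delta$-hyperbolicity of $\mathbb{H}_{-\epsilon^{2}}^{N}$, with $\delta=\frac{1}{2\epsilon}\log 3$, to show that for each $A_i^\circ$ with $i<p$ and each $A_j^\circ$ with $j\ge p$, the geodesic segment $A_i^\circ A_j^\circ$ passes within hyperbolic distance $O(e^{-\epsilon|X_1X_2|/2})$ of $M$. This is the higher-dimensional analogue of the collapse observed in Gromov's example, and it is the mechanism driving the exponential convergence claimed in the statement.

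Next I would promote this one-dimensional thinness to a Hausdorff-distance convergence of the full convex hull $\Delta^N=\operatorname{conv}\{A_1^\circ,\ldots,A_{N+1}^\circ\}$ to the union $\operatorname{conv}\{A_1^\circ,\ldots,A_{p-1}^\circ,M\}\cup\operatorname{conv}\{A_p^\circ,\ldots,A_{N+1}^\circ,M\}$. This can be argued by decomposing any point of $\Delta^N$ along a system of geodesics joining vertices in opposite clusters---each such geodesic has been forced through a neighborhood of $M$ by the previous step---and then using Milnor's ideal-regular volume bound (cf.~Lemma~\ref{regmaxvol1h}) to control the transverse volume of the complement.

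The main obstacle will be rigorously controlling the higher-dimensional collapse: while the base case $N=3$, $p=3$ reduces to the well-known thinness of hyperbolic triangles, for general $N$ and $p$ one must bound the width of an ideal $N$-simplex transverse to the stretching direction $X_1X_2$. I expect this to require an iterated application of the hyperbolic cosine law on nested subfaces of $\Delta^N$, or equivalently an exponential tube estimate around the axis $X_1X_2$, so that the implied constants in the exponential rate of convergence depend only on $N$, $p$, and $\epsilon$.
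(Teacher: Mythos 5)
The paper gives no proof of Lemma~\ref{convN}: it is stated as a formal extension of Lemma~\ref{conv3}, which is itself only a citation of Gromov's example, so there is no argument in the text for your attempt to be measured against. Your proposal therefore supplies more than the paper does, and its central mechanism is the right one: each cross-cluster geodesic $A_i^{\circ}A_j^{\circ}$ with $i<p\le j$ must fellow-travel the broken path $A_i^{\circ}X_1\cup X_1X_2\cup X_2A_j^{\circ}$, and exponential divergence of geodesics in $\mathbb{H}^{N}_{-\epsilon^{2}}$ then forces it exponentially close to the midpoint $M$ of $X_1X_2$ as $|X_1X_2|\to\infty$. You also implicitly repair the index slip in the statement (the second cluster should be $\{A_p^{\circ},\dots,A_{N+1}^{\circ}\}$, which is what makes the second tuple an $(N+3-p)$-tuple).

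Two steps remain genuinely open, however. First, you never fix the sense of convergence, and this is not cosmetic: if $M$ is held fixed while $X_1,X_2$ recede, the ideal vertices of each cluster coalesce at the two ideal endpoints of the line through $X_1X_2$, so the claim has to be read as an assertion that the Hausdorff distance between $\Delta^{N}$ and $\operatorname{conv}\{A_1^{\circ},\dots,A_{p-1}^{\circ},M\}\cup\operatorname{conv}\{A_p^{\circ},\dots,A_{N+1}^{\circ},M\}$ (all objects varying with the parameter) tends to zero exponentially, not that $\Delta^{N}$ converges to a fixed limit set; without pinning this down the statement cannot be verified. Second, the passage from collapse of the cross-cluster edges to collapse of the full convex hull is the hard part, and the tool you name for it, the isoperimetric volume bound of Lemma~\ref{regmaxvol1h}, cannot do that job: an upper bound on the volume of the regular ideal simplex says nothing about how far points of a convex hull can sit from a lower-dimensional spine, since a region of small volume may still contain points at bounded distance from $T_1\cup T_2$. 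What is actually required is a quantitative quasi-convexity estimate -- that the convex hull of the ideal vertices lies in a neighborhood of the union of its edges whose transverse radius is exponentially small in $|X_1X_2|$ -- and that is exactly the estimate you defer to an unspecified iterated cosine-law computation. Until it is written down, the argument is an outline rather than a proof.
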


In \cite[Example,p.~234]{Gromov:87}, Gromov also observed that if the basic invariant of $\Delta^{3}$  $X_{1}X_{2}<\infty,$ then the geometry of the ideal simplex $\Delta^{3}$ is close to that of the regular ideal simplex.

In Nature, an evolutionary tree with respect to a boundary closed polyhedron in $\mathbb{R}^{3}$ tends to maximize the volume of the closed polyhedron formed by the boundary vertices with a mimimum communication via minimum transfer of mass along the branches. The evolutionary tree reduces its length by placing intermediate Fermat-Steiner points inside the boundary polyhedron.

\begin{definition}[Degree of intelligence]
We call degree of intelligence of an evolutionary tree in $\mathbb{H}_{K}^{N}$ the number of intermediate Fermat Steiner points, which correspond to an intermediate Fermat Steiner tree of an $N$-simplex in $\mathbb{H}_{K}^{N}.$
\end{definition}

\begin{definition}[Maximum degree of intelligence of an evolutionary tree]
The maximum degree of intelligence of an evolutionary tree, which correspond to an intermediate Fermat Steiner tree of an $N$-simplex in $\mathbb{H}_{K}^{N}$ is $N-2.$
\end{definition}

Consider two trees $T$ and $T^{\prime}$ having the same set of boundary vertices in $\mathbb{H}_{k}^{2}.$

\begin{definition}[Intelligent trees]
 The tree $T$ is more intelligent than then tree $T^{\prime}$ if the degree of intelligence of $T$ is greater than
 the degree of intelligence of $T^{\prime}.$
\end{definition}

We continue with Gromov's definition of thin triangles in $\delta$ hyperbolic metric spaces, which help us to view  $\delta$ hyperbolic metric spaces as metric trees with a prescribed thickness (\cite{Gromov:87},\cite[1.1 Fefinition, Fig.~H.1,p.~399]{BridsonHaefliger:99}).

\begin{definition}{Thin triangle,\cite[p.~183]{Gromov:87}\cite[1.1 Fefinition, Fig.~H.1,p.~399]{BridsonHaefliger:99}}
A geodesic triangle in a metric space is said to be $\delta$ thin if each of its sides is contained in the $\delta$ neighborhood of the union of the other two sides.
\end{definition}
A metric (geodesic) tree with zero thickness is an $\mathbb{R}$ tree (Zero hyperbolic geodesic space)

\begin{definition}[Degree of intelligence of a $\delta$ metric tree]
The degree of intelligence of a $\delta$ metric tree embedded in an $N$ simplex is the degree of intelligence
of the corresponding $zero$ ($\delta=0$) metric tree (intermediate Fermat-Steiner tree) with respect to the same boundary $N$-simplex.
\end{definition}

Consider an intermediate Fermat-Steiner tree having two degrees of intelligence $X_{1},$ $X_{2}$ with corresponding degrees $p$ and $N+2-p,$ respectively inside the ideal simplex $A_{1}^{\circ}A_{2}^{\circ}\cdots A_{N}^{\circ}\in \mathbb{H}_{-\epsilon^{2}}^{N}.$
\begin{theorem}[Reduction of the degree of intelligence of a metric $\delta$ tree]\label{reductionintelligence}
If the hyperbolic distance $X_{1}X_{2}\to infinity,$ then the degree of intelligence of the $\frac{1}{2\epsilon^{2}}\log 3 $ metric tree is reduced by one degree from two to one.
\end{theorem}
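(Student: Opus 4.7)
The plan is to derive the statement as a direct structural consequence of Lemma~\ref{convN} together with the definitions of \emph{degree of intelligence} for metric $\delta$-trees. First I would fix the intermediate Fermat-Steiner tree $S$ inscribed in the ideal simplex $A_1^{\circ}\cdots A_{N+1}^{\circ}\subset \mathbb{H}_{-\epsilon^{2}}^{N}$ having exactly two intermediate Fermat-Steiner vertices $X_1$ and $X_2$, of degrees $p$ and $N+2-p$ respectively, and identify its geometric skeleton: the segment $X_1X_2$ together with the bouquets of rays $\{X_1 A_i^{\circ}\}_{i\le p-1}$ and $\{X_2 A_i^{\circ}\}_{i\ge p}$. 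By definition, the degree of intelligence of $S$ inside the ambient ideal simplex is $2$, and by Definition of degree of intelligence for a $\delta$-metric tree this is also the degree of intelligence of the associated $\tfrac{1}{2\epsilon^{2}}\log 3$ thick tree.

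Next I would let $X_1X_2\to\infty$ and invoke Lemma~\ref{convN} verbatim: the ideal $N$-simplex converges exponentially fast to the union of two hyperbolic sub-simplexes, one spanned by the $p$-tuple $\{A_1^{\circ},\dots,A_{p-1}^{\circ},M\}$ and one by the $(N+3-p)$-tuple $\{A_p^{\circ},\dots,A_{N+2-p}^{\circ},M\}$, where $M=\tfrac{X_1+X_2}{2}$ is the common new apex. Under this limit the midpoint $M$ collapses the two old branch-points $X_1,X_2$ into a single point that, on each sub-simplex, plays the role of a unique interior Fermat-Steiner vertex of the respective induced Fermat-Steiner tree; the segment $X_1X_2$ itself escapes to the hyperbolic boundary and is no longer an internal feature.

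The last step is bookkeeping. On each of the two limiting sub-simplexes there is now exactly one intermediate Fermat-Steiner point ($M$), so by the Definition of degree of intelligence applied to each sub-simplex the degree of intelligence equals $1$. Passing to $\delta$-metric trees via the definition that endows $\delta=\tfrac{1}{2\epsilon^{2}}\log 3$ thick trees with the degree of intelligence of their underlying $\mathbb{R}$-tree skeletons, we conclude that the degree of intelligence of the $\delta$-tree has dropped from $2$ to $1$, as claimed.

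The delicate step — and the one I would expect to be the main obstacle — is justifying that the limiting procedure is compatible with the $\delta$-thinness framework: one must argue that the exponential convergence in Lemma~\ref{convN} takes place inside the $\delta$-neighborhoods guaranteed by Gromov-Dekster thinness, so that the $\delta$-tree structure does not degenerate in an uncontrolled way when $X_1X_2\to\infty$. I would handle this by noting that on any compact sub-region away from $\partial\mathbb{H}_{-\epsilon^{2}}^{N}$ the thinness constant is preserved, and the only feature lost in the limit is the segment $X_1X_2$ together with one of the two branch-points, which is precisely the reduction of intelligence by one unit claimed in the theorem.
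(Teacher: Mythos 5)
Your proposal is correct and follows essentially the same route as the paper: both invoke Lemma~\ref{convN} to collapse the segment $X_{1}X_{2}$ in the limit and then recognize the common apex $\frac{X_{1}+X_{2}}{2}$ as a single interior branch point, reducing the count of intermediate Fermat-Steiner vertices from two to one. The only cosmetic difference is that the paper realizes $\frac{X_{1}+X_{2}}{2}$ as the \emph{weighted} Fermat point of the union of the two limiting sub-simplexes (one tree with $N+1$ branches $A_{i}^{\circ}\frac{X_{1}+X_{2}}{2}$ for a suitable tuple of weights), whereas you count one node on each sub-simplex separately; since both sub-simplexes share that apex, the resulting degree of intelligence is the same.
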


\begin{proof}
By applying Lemma~\ref{convN} and taking into account $X_{1}X_{2}\to \infty,$ $\Delta^{N}$ exponentially fast converges to the union of two hyperbolic simplexes spanned by the
$p-tuple$ $\{A_{1}^{\circ},A_{2}^{\circ},\cdots,A_{p-1}^{\circ},\frac{X_{1}+X_{2}}{2}\}$ and the $N+3-p$tuple
$\{A_{p}^{\circ},A_{p+1}^{\circ},\cdots,A_{N+2-p}^{\circ},\frac{X_{1}+X_{2}}{2}\},$ respectively.

Hence, one may consider the vertex $\frac{X_{1}+X_{2}}{2}$ as the weighted Fermat point with respect to the union of the two hyperbolic simplexes, which minimize the objective function (zero Fermat (metric) geodesic tree) \[w_{1 }A_{1}^{\circ }\frac{X_{1}+X_{2}}{2}+w_{2}A_{2}^{\circ}\frac{X_{1}+X_{2}}{2}+w_{p-1}A_{p-1}^{\circ}\frac{X_{1}+X_{2}}{2}+\cdots+w_{N+2-p}A_{N+2-p}^{\circ}\frac{X_{1}+X_{2}}{2}. \]
Taking into account the corresponding $\delta=\frac{1}{\epsilon^{2}}\log 3$ we may consider a $k \delta$ hyperbolic metric tree (\cite[Lemma,p.~183]{Gromov:87}) as a union of the $\delta$ branches $A_{i}^{\circ}\frac{X_{1}+X_{2}}{2},$ for $i=1,2,\cdots, N+1,$ for a proper $N+1$ tuple of weights
(positive real numbers), we obtain a metric tree structure having one degree of intelligence (one Fermat point).
\end{proof}


\section{A variational approach to the weighted Fermat-Frechet problem for a given sextuple of edge lengths determining a $3-$simplex in the three-dimensional $K-$Space}
In this section, we obtain a method to differentiate the length of a geodesic arc
with respect to a geodesic arc in the 3 $K-$Space, which is different than the method derived by Schlafli (\cite{Schlafli:53})and Luo (\cite{LuoFeng:08}). This new variational technique can be applied to solve the weighted Fermat-Frechet problem for boundary incongruent tetrahedra in the 3 $K-$Space and determine the corresponding weighted Fermat trees for each derived tetrahedron, such that their edges belongs to the Dekster Wilker spherical domain $DW_{\mathbb{S}_{\frac{1}{\sqrt{K}}}^{3}}(\ell, s)$ or hyperbolic domain $DW_{\mathbb{H}_{K}^{3}}(\ell, s).$

Let $A_{1}A_{2}A_{3}A_{4}$ be a tetrahedron in the 3$K-$Space and $A_{0}$ be the corresponding weighted Fermat point inside the tetrahedron.

We denote by $\alpha$ the dihedral angle formed by the planes $\triangle A_{1}A_{2}A_{3}$ and $\triangle A_{1}A_{2}A_{0}$ and by $\alpha_{g_{4}}$ the dihedral angle formed by the planes $\triangle A_{1}A_{2}A_{3}$ and $\triangle A_{1}A_{2}A_{4},$ by $h_{0,12}$ the height of $\triangle A_{1}A_{2}A_{3}$ from $A_{0}$ to $A_{1}A_{2},$

by $A_{0,12}$ the projection of $A_{0}$ to $A_{1}A_{2}$   by $A_{0,123}$ the projection of $A_{0}$ to the $K-$plane defined by $\triangle A_{1}A_{2}A_{3}$ by $h_{0,123}$ the length of $A_{0}A_{123},$ by $x_{i}$ the length of $A_{i}A_{123},$ by $d$ the length of $A_{0,12}A_{0,123}$ and by $l$ the length of $A_{0,12}A_{2},$ for $i=1,2,3.$

We set $\alpha_{ijk}\equiv \angle A_{i}A_{j}A_{k},$ $\beta \equiv \angle A_{1}A_{2}A_{0,123},$ $h_{0,12}\equiv A_{0}A_{0,12},$ $h_{0,123}\equiv A_{0}A_{0,123}$ and $l\equiv A_{2}A_{0,12}.$

 for $i,j,k=0,1,2,3$

\begin{theorem}\label{a04a01a02a03}
The geodesic edges $a_{03}$ and $a_{04}$ can be expressed as functions of $a_{01},a_{02},\alpha:$


\begin{eqnarray}\label{Kplane24}
\cos\kappa a_{03}=\cos\kappa a_{02}\cos\kappa a_{23}+\nonumber\\
+\sin\kappa a_{23}\cos\kappa h_{0,12}\cos\alpha_{123}\sin\kappa l(a_{01},a_{02})+\nonumber\\
+\sin\kappa a_{23}\sin\alpha_{123}\cos\alpha
\end{eqnarray}

\begin{eqnarray}\label{Kplane25}
\cos\kappa a_{04}=\cos\kappa a_{02}\cos\kappa a_{24}+\nonumber\\
+\sin\kappa a_{24}\cos\kappa h_{0,12}\cos\alpha_{124}\sin\kappa l(a_{01},a_{02})+\nonumber\\
+\sin\kappa a_{24} \sin\alpha_{124}\cos(\alpha_{g_{4}}-\alpha)
\end{eqnarray}

\end{theorem}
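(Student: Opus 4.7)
The plan is to apply the $K$-space law of cosines in the geodesic triangle $\triangle A_0A_2A_3$ and then rewrite the cosine of its angle at $A_2$ using the foot $A_{0,12}$ together with the dihedral angle $\alpha$. The cosine law gives
\[\cos(\kappa a_{03})=\cos(\kappa a_{02})\cos(\kappa a_{23})+\sin(\kappa a_{02})\sin(\kappa a_{23})\cos(\angle A_0A_2A_3),\]
which produces the first term of (\ref{Kplane24}) immediately; the task then reduces to expressing $\sin(\kappa a_{02})\cos(\angle A_0A_2A_3)$ as the sum of the remaining two ``mixed'' terms.

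To do this, I would exploit the right geodesic triangle $A_0A_{0,12}A_2$, which has right angle at $A_{0,12}$, hypotenuse $a_{02}$ and legs $h_{0,12}$ and $l$. Setting $\theta:=\angle A_1A_2A_0$ and invoking the standard $K$-space right-triangle identities $\cos(\kappa a_{02})=\cos(\kappa h_{0,12})\cos(\kappa l)$, $\sin(\kappa a_{02})\cos\theta=\cos(\kappa h_{0,12})\sin(\kappa l)$, and $\sin(\kappa a_{02})\sin\theta=\sin(\kappa h_{0,12})$, I would then decompose the unit tangent $u_0\in T_{A_2}$ toward $A_0$ as $u_0=\cos\theta\,u_1+\sin\theta\,w$, where $u_1$ is the unit tangent toward $A_1$ and $w$ is the unit vector orthogonal to $u_1$ in the geodesic plane of $\triangle A_1A_2A_0$ on the $A_0$-side. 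The dihedral angle $\alpha$ is, by definition, the angle between $w$ and the analogous in-plane unit vector $u_3':=(u_3-\cos\alpha_{123}\,u_1)/\sin\alpha_{123}$ of $\triangle A_1A_2A_3$, so $w=\cos\alpha\,u_3'+\sin\alpha\,v$ with $v$ orthogonal to the $T_{A_2}$-plane of $\triangle A_1A_2A_3$. Pairing $u_0$ with the unit tangent $u_3$ toward $A_3$ and using $u_1\cdot u_3=\cos\alpha_{123}$, $u_3'\cdot u_3=\sin\alpha_{123}$ and $v\cdot u_3=0$ yields $\cos(\angle A_0A_2A_3)=\cos\theta\cos\alpha_{123}+\sin\theta\sin\alpha_{123}\cos\alpha$. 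Multiplying by $\sin(\kappa a_{02})$ and substituting the two right-triangle identities recovers the remaining summands of (\ref{Kplane24}).

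The identity (\ref{Kplane25}) follows by the same argument with $(A_3,\alpha_{123})$ replaced by $(A_4,\alpha_{124})$. The only new ingredient is that, since $\alpha_{g_4}$ and $\alpha$ are both dihedral angles along $A_1A_2$ measured from the plane of $\triangle A_1A_2A_3$, the dihedral angle between the planes of $\triangle A_1A_2A_0$ and $\triangle A_1A_2A_4$ is $\alpha_{g_4}-\alpha$, which is why $\cos\alpha$ is replaced by $\cos(\alpha_{g_4}-\alpha)$. The step I expect to be the main obstacle is the orientation and sign bookkeeping of the dihedral angles: the formulas implicitly assume that, as one rotates the half-plane about $A_1A_2$ from the $A_3$-side through $A_0$ to the $A_4$-side, the dihedral angles add with the stated signs, which must be checked using that $A_0$ is interior to the tetrahedron and therefore lies in the wedge between the faces. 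A secondary point worth noting is that the tangent-space decomposition at $A_2$ uses only the inner product on $T_{A_2}$, so the regimes $K>0$, $K=0$ and $K<0$ are handled uniformly; the only $K$-dependent input is the right-triangle formula, which is standard in all three cases (with trigonometric or hyperbolic sines/cosines as appropriate).
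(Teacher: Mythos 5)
Your argument is correct and reaches the right identity, but by a genuinely different route from the paper. The paper works entirely with orthogonal projections onto the face plane of $\triangle A_{1}A_{2}A_{3}$: it introduces $A_{0,123}$, the auxiliary lengths $x_{2},x_{3},d$ and the angle $\beta$, strings together the cosine/sine laws in half a dozen right sub-triangles, and only at the very end eliminates $d$ via $\tan\kappa d=\tan\kappa h_{0,12}\cos\alpha$ from (\ref{Kplane23}). You instead apply the $K$-space cosine law once in $\triangle A_{0}A_{2}A_{3}$ and compute $\cos\angle A_{0}A_{2}A_{3}$ by the trihedral-angle decomposition in the unit tangent sphere at $A_{2}$ (i.e.\ the spherical law of cosines for the corner at $A_{2}$, with the dihedral angle along $A_{1}A_{2}$ appearing as the angle between the in-plane normals $w$ and $u_{3}'$), then convert $\sin\kappa a_{02}\cos\theta$ and $\sin\kappa a_{02}\sin\theta$ via the two right-triangle identities in $\triangle A_{0}A_{0,12}A_{2}$. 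This is shorter, avoids the auxiliary point $A_{0,123}$ altogether, and makes the $K$-uniformity and the orientation convention for $\alpha_{g_{4}}-\alpha$ explicit, which the paper leaves implicit. One point you should flag: your computation yields $\sin\kappa a_{23}\,\sin\kappa h_{0,12}\,\sin\alpha_{123}\cos\alpha$ as the last summand, whereas the theorem as printed has $\sin\kappa a_{23}\sin\alpha_{123}\cos\alpha$ without the factor $\sin\kappa h_{0,12}$. Your version is the correct one: substituting (\ref{Kplane23}) into the paper's own intermediate formula (\ref{Kplane1567}) produces exactly $\cos\kappa h_{0,12}\tan\kappa h_{0,12}=\sin\kappa h_{0,12}$ in that term, and the Euclidean specialization (\ref{impa03}) carries the corresponding factor $h_{0,12}$; so the discrepancy is a typo in the statement of (\ref{Kplane24})--(\ref{Kplane25}), not a gap in your proof.
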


\begin{proof}
From the cosine law in $\triangle A_{1}A_{2}A_{3}\in K$-plane, we get:
\begin{eqnarray}\label{Kplane1}
\cos\alpha_{102}=\frac{\cos \kappa a_{12} - \cos \kappa a_{01}\cos\kappa a_{02}}{\sin \kappa a_{01} \sin\kappa a_{02}}
\end{eqnarray}

From the sine law in $\triangle A_{2}A_{0}A_{0,12},$ we get:

\begin{eqnarray}\label{Kplane21}
\sin\alpha_{120}=\frac{\sin\kappa h_{0,12}}{\sin \kappa a_{02}}
\end{eqnarray}

By substituting (\ref{Kplane21})in the sine law of $\triangle A_{1}A_{2}A_{0},$ we derive:

\begin{eqnarray}\label{Kplane22}
\frac{\sin \kappa a_{12}}{ \sin \alpha_{102}}=\frac{\sin \kappa a_{1}}{\sin \alpha_{120}}=\frac{\sin \kappa a_{01}\sin \kappa a_{02}}{\sin \kappa h_{0,12}}
\end{eqnarray}

or

\begin{eqnarray}\label{Kplane3}
\sin\kappa h_{0,12}=\frac{\sin \kappa a_{01}\sin \kappa a_{02}}{\sin \kappa a_{12}} \sin \alpha_{102}.
\end{eqnarray}

By substituting (\ref{Kplane1}) in (\ref{Kplane3}), we obtain:

\begin{eqnarray}\label{Kplane456}
\sin\kappa h_{0,12}=\frac{\sin \kappa a_{01}\sin \kappa a_{02}}{\sin \kappa a_{12}} \sqrt{1-(\frac{\cos \kappa a_{12} - \cos \kappa a_{01}\cos\kappa a_{02}}{\sin \kappa a_{01} \sin\kappa a_{02}})^{2}}
\end{eqnarray}

From the cosine law in $\triangle A_{0}A_{0,123}A_{3},$ we get:
\begin{eqnarray}\label{Kplane7}
\cos \kappa a_{03}=\cos\kappa h_{0,123}\cos \kappa x_{3}
\end{eqnarray}

From the cosine law in $\triangle A_{0,123}A_{3}A_{2},$ we have:

\begin{eqnarray}\label{Kplane8}
\cos \kappa x_{3}=\cos\kappa x_{2} \cos \kappa a_{23}+\sin\kappa x_{2} \sin \kappa a_{23}\cos(\alpha_{123}-\beta)
\end{eqnarray}

By substituting (\ref{Kplane8}) in (\ref{Kplane7}), we get:

\begin{eqnarray}\label{Kplane9}
\cos\kappa a_{03}=\cos\kappa x_{2}\cos\kappa h_{0,123}\cos\kappa a_{23}+\nonumber\\
+\sin\kappa x_{2}\cos\kappa h_{0,123}\sin\kappa a_{23}\cos(\alpha_{123}-\beta)
\end{eqnarray}

From the cosine law in $\triangle A_{0,123}A_{2}A_{0},$ $\triangle A_{0}A_{0,12}A_{2},$ we get respectively:

\begin{equation}\label{Kplnae9bis1}
\cos\kappa a_{2}= \cos\kappa x_{2}\cos\kappa h_{0,123}
\end{equation}

and

\begin{equation}\label{Kplnae9bis2}
\cos\kappa h_{0,123}=\cos\kappa \frac{\cos\kappa h_{0,12}}{\cos\kappa d}.
\end{equation}

By substituting (\ref{Kplnae9bis1}) and (\ref{Kplnae9bis2}) in (\ref{Kplane9}), we have:

\begin{eqnarray}\label{Kplane10}
\cos \kappa a_{03}=\cos\kappa a_{02}\cos\kappa a_{23}+\nonumber\\ + \sin\kappa a_{23}\sin\kappa x_{2}\frac{\cos\kappa h_{0,12}}{\cos\kappa d}\cos\alpha_{123}\cos\beta+\nonumber\\+ \sin\kappa a_{23}\sin\kappa x_{2}\frac{\cos\kappa h_{0,12}}{\cos\kappa d}\sin\alpha_{123}\sin\beta
\end{eqnarray}

From the sine law and cosine law in $\triangle A_{0,123}A_{0,12}A_{2},$ we get:

\begin{eqnarray}\label{Kplane111}
\sin\kappa x_{2}=\frac{\sin\kappa d}{\sin\beta}
\end{eqnarray}

\begin{eqnarray}\label{Kplane112}
\cos\kappa x_{2}=\cos\kappa d \cos\kappa l.
\end{eqnarray}

By substituting (\ref{Kplane111}) and (\ref{Kplane112}) in (\ref{Kplane10}), we have:

\begin{eqnarray}\label{Kplane12}
\cos\kappa a_{03}=\cos\kappa a_{02}\cos\kappa a_{23}+\nonumber\\
+\sin\kappa a_{23}\tan\kappa d \cos\kappa h_{0,12}\cos\alpha_{123}\cos\beta \sin\kappa x_{2}+\nonumber\\
+\sin\kappa a_{23}\tan\kappa d \cos\kappa h_{0,12}\sin\alpha_{123}\sin\beta \sin\kappa x_{2}
\end{eqnarray}

From the cosine law and sine law in $\triangle A_{0,123}A_{0,12}A_{2},$ we get:

\begin{eqnarray}\label{Kplane121}
\cos\beta=\frac{\cos\kappa d -\cos \kappa l \cos\kappa x_{2}}{\sin \kappa l \sin\kappa x_{2}}
\end{eqnarray}

\begin{eqnarray}\label{Kplane13}
\sin\beta=\frac{\sin\kappa d}{\sin \kappa x_{2}}
\end{eqnarray}

\begin{eqnarray}\label{Kplane14}
\cos\kappa l=\frac{\cos\kappa a_{02}}{\cos\kappa h_{0,12}}
\end{eqnarray}

Taking into account that $h_{012}=h_{012}(a_{01},a_{02}),$ (\ref{Kplane14}) yields:

\[l=l(a_{01},a_{02})\]

By substituting (\ref{Kplane121}) and (\ref{Kplane13}) in (\ref{Kplane12}), we have:

\begin{eqnarray}\label{Kplane1567}
\cos\kappa a_{03}=\cos\kappa a_{02}\cos\kappa a_{23}+\nonumber\\
+\sin\kappa a_{23}\cos\kappa h_{0,12}\cos\alpha_{123}\sin\kappa l+\nonumber\\
+\sin\kappa a_{23}\cos\kappa h_{0,12}\sin\alpha_{123}\tan \kappa d
\end{eqnarray}

From the cosine law and sine law in $\triangle A_{0}A_{0,12}A_{0,123},$ we get:

\begin{eqnarray}\label{Kplane1819}
\cos\kappa h_{0,123}=\frac{\cos\kappa h_{0,12}}{\cos\kappa d}
\end{eqnarray}

\begin{eqnarray}\label{Kplane20}
\sin\kappa h_{0,123}=\frac{\sin\kappa h_{0,12}}{\sin\alpha}
\end{eqnarray}

\begin{eqnarray}\label{Kplane21}
\cos\kappa h_{0,123}=\cos\kappa d \cos\kappa h_{0,12}+\sin\kappa d \sin\kappa h_{0,12}\cos\alpha
\end{eqnarray}

Substituting (\ref{Kplane1819}), (\ref{Kplane20}) in (\ref{Kplane21}) yields:

\begin{eqnarray}\label{Kplane23}
\tan\kappa d=\tan\kappa h_{0,12} \cos\alpha
\end{eqnarray}

By substituting (\ref{Kplane23}) in (\ref{Kplane1567}), we obtain (\ref{Kplane24}).

By working cyclically and changing the index $3\to 4,$ we derive (\ref{Kplane25}).

\end{proof}

\begin{proposition}\label{propa04a01a02a03}
\begin{eqnarray}\label{Kplane24bis}
\kappa a_{04}=\kappa a_{04}(a_{01},a_{02},a_{03}; a_{12},a_{13},a_{14},a_{21},a_{23},a_{24},a_{34},\kappa).
\end{eqnarray}
\end{proposition}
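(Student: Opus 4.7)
The plan is to eliminate the dihedral angle $\alpha$ between the two identities (\ref{Kplane24}) and (\ref{Kplane25}) already established in Theorem~\ref{a04a01a02a03}, then invoke the interior orientation of $A_{0}$ to resolve the remaining sign ambiguity. First I observe that (\ref{Kplane24}) is affine in $\cos\alpha$, with the coefficient $\sin\kappa a_{23}\sin\alpha_{123}$ which is nonzero in the non-degenerate case. The angle $\alpha_{123}$ is a function of $a_{12},a_{13},a_{23}$ through the $K$-plane cosine law, while $h_{0,12}=h_{0,12}(a_{01},a_{02},a_{12})$ is given by (\ref{Kplane456}) and $l=l(a_{01},a_{02},a_{12})$ by (\ref{Kplane14}) combined with (\ref{Kplane456}). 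Solving (\ref{Kplane24}) therefore yields
\[
\cos\alpha=\frac{\cos\kappa a_{03}-\cos\kappa a_{02}\cos\kappa a_{23}-\sin\kappa a_{23}\cos\kappa h_{0,12}\cos\alpha_{123}\sin\kappa l(a_{01},a_{02})}{\sin\kappa a_{23}\sin\alpha_{123}},
\]
which is manifestly a function of $(a_{01},a_{02},a_{03};a_{12},a_{13},a_{23},\kappa)$.

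Next, since $A_{0}$ is interior to the tetrahedron $A_{1}A_{2}A_{3}A_{4}$, the dihedral angle $\alpha$ along the edge $A_{1}A_{2}$ lies strictly between $0$ and $\alpha_{g_{4}}$; in particular $\sin\alpha>0$ and $\sin\alpha=\sqrt{1-\cos^{2}\alpha}$ is again a function of $(a_{01},a_{02},a_{03};a_{12},a_{13},a_{23},\kappa)$. The dihedral angle $\alpha_{g_{4}}$ of the boundary tetrahedron along $A_{1}A_{2}$ is computable from $a_{12},a_{13},a_{14},a_{23},a_{24},a_{34}$ by applying the $K$-plane cosine law to the two faces $\triangle A_{1}A_{2}A_{3}$ and $\triangle A_{1}A_{2}A_{4}$, together with the appropriate spherical/hyperbolic dihedral angle formula.

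I then expand $\cos(\alpha_{g_{4}}-\alpha)=\cos\alpha_{g_{4}}\cos\alpha+\sin\alpha_{g_{4}}\sin\alpha$ and substitute into (\ref{Kplane25}), noting that $\alpha_{124}$ (a function of $a_{12},a_{14},a_{24}$), $h_{0,12}(a_{01},a_{02},a_{12})$ and $l(a_{01},a_{02},a_{12})$ depend only on the permitted arguments. This expresses $\cos\kappa a_{04}$ as an explicit function of $(a_{01},a_{02},a_{03};a_{12},a_{13},a_{14},a_{23},a_{24},a_{34},\kappa)$, and taking $\arccos$ on the principal branch (which is legitimate because $A_{0}$ lies inside the convexity radius) delivers (\ref{Kplane24bis}).

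The main obstacle is the sign of $\sin\alpha$: the algebraic elimination through $\cos\alpha$ alone is two-valued and corresponds geometrically to reflecting $A_{0}$ across the plane $\triangle A_{1}A_{2}A_{3}$. This ambiguity is resolved exactly by the interior condition $0<\alpha<\alpha_{g_{4}}$, which forces $\sin\alpha>0$ and singles out the branch of the square root; without this orientation convention the claim would fail, since two distinct positions of $A_{0}$ produce the same triple $(a_{01},a_{02},a_{03})$ but different values of $a_{04}$. Once this is in place the functional dependence asserted by the proposition is immediate.
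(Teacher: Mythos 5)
Your proposal follows essentially the same route as the paper's proof: solve (\ref{Kplane24}) for $\cos\alpha$, substitute into (\ref{Kplane25}), and replace $\alpha_{123}$, $\alpha_{124}$ and the auxiliary quantities $h_{0,12}$, $l$ by their expressions in the admissible edge lengths via the $K$-plane cosine and sine laws. Your additional observation that the interior condition $0<\alpha<\alpha_{g_{4}}$ is needed to fix the sign of $\sin\alpha$ when expanding $\cos(\alpha_{g_{4}}-\alpha)$ is a point the paper's one-line argument leaves implicit, and it is a worthwhile clarification rather than a different method.
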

\begin{proof}
By solving (\ref{Kplane24}) with respect to $\cos\alpha$ and substituting this variable in (\ref{Kplane25}) and by replacing $\alpha_{123},$ $\alpha_{124},$ from the cosine law and the sine law in $\triangle A_{1}A_{2}A_{4}$ and $A_{1}A_{2}A_{3}$ in (\ref{Kplane25}) we get (\ref{Kplane24bis}).
\end{proof}

\begin{theorem}[The weighted Fermat-Torricelli Frechet solution]\label{thm1vartetrahedron}

The following equations depending on the variable edge lengths $a_{01},a_{02}, a_{03}$ the constant sectional curvature $K$ and the edge lengths $\{a_{ij}\}$ provide a necessary condition for the location of the weighted floating Fermat-Torricelli trees $\{a_{01},a_{02},a_{03},a_{04}\},$ which belong to the weighted Fermat-Frechet (multitree solution in the 3$K$-Space:
\begin{align*}
\sum_{i=2}^{4}\frac{B_{i}}{\sinh K a_{01}\sinh K a_{0i}}(\cosh Ka_{01}\cosh Ka_{0i}-\cosh Ka_{1i})-\nonumber \\ \sum_{i=1,{i\ne
j}}^{4}\frac{B_{i}}{\sinh Ka_{0j} \sinh Ka_{0i}}(\cosh Ka_{0j}\cosh Ka_{0i}-\cosh Ka_{ji})=0,
\end{align*}

\begin{equation}\label{spherihyperbolicvolnbis}
\operatorname{Vol}(A_{0},A_{i},A_{j},A_{k})< \operatorname{Vol}(C_{1}C_{2},C_{3},C_{4}).
\end{equation}

for $i,j,k=1,2,3,4, i\ne j\ne k.$

\end{theorem}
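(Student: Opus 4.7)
The plan is to recognize the statement as a direct specialization of Theorem~\ref{floatmultitreehn} (and its spherical analogue Theorem~\ref{floatmultitreesn}) to $N=4$, with the crucial simplification coming from the new variational identity of Theorem~\ref{a04a01a02a03}, which allows one of the four branch lengths to be eliminated.

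First I would invoke the Kupitz--Martini type weighted norm inequality for $N=4$ and use convexity/compactness of the distance sum $\sum_{i=1}^{4} B_i\, a_{0i}$ on a geodesically convex neighborhood of the tetrahedron $A_1A_2A_3A_4$ in the $3K$-space. By the same subtraction trick employed in the proofs of Theorems~\ref{floatmultitreehn} and~\ref{floatmultitreesn} (differentiate $f=\sum B_i a_{0i}$ with respect to arc length in the directions $a_{01},\dots,a_{0j}$ and subtract), one obtains the three vanishing relations
\begin{equation*}
\sum_{i=2}^{4}\frac{B_{i}}{\sinh K a_{01}\sinh K a_{0i}}\bigl(\cosh Ka_{01}\cosh Ka_{0i}-\cosh Ka_{1i}\bigr)-\sum_{\substack{i=1\\ i\ne j}}^{4}\frac{B_{i}}{\sinh Ka_{0j}\sinh Ka_{0i}}\bigl(\cosh Ka_{0j}\cosh Ka_{0i}-\cosh Ka_{ji}\bigr)=0
\end{equation*}
for $j=2,3,4$. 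These are formally four unknowns $a_{01},a_{02},a_{03},a_{04}$ in three equations, exactly as in Theorem~\ref{floatmultitreehn} for $N=4$.

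Next I would apply Proposition~\ref{propa04a01a02a03}, which is the main new ingredient. Since the proposition expresses
\[
a_{04}=a_{04}(a_{01},a_{02},a_{03};\ a_{12},a_{13},a_{14},a_{23},a_{24},a_{34},\kappa),
\]
I would substitute this closed form into the three equations above, thereby reducing the system to three equations in the three unknowns $a_{01},a_{02},a_{03}$. This is precisely the elimination announced in the abstract and introduction of the paper, and it replaces the additional Cayley--Menger or volume--addition equation used in Theorem~\ref{TheorfloatingRN} by a genuinely variational identity obtained from Theorem~\ref{a04a01a02a03}.

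For the volume bound I would argue as in Theorem~\ref{floatmultitreesn} (respectively as in the reformulation~\ref{hyperbolicvolnbis} for the hyperbolic case). Because $A_0$ lies in the interior of the boundary tetrahedron $A_1A_2A_3A_4$, each subsimplex $A_0A_iA_jA_k$ is contained in the boundary tetrahedron, so its volume is strictly less than the total volume, and the isoperimetric inequalities for the $K$-space (Lemmas~\ref{regmaxvol1h} and~\ref{regmaxvol1s}) dominate the latter by the volume of the regular simplex $C_1C_2C_3C_4$ built on the averaged edge length, giving (\ref{spherihyperbolicvolnbis}).

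The step I expect to require the most care is the substitution coming from Proposition~\ref{propa04a01a02a03}: one must check that the dihedral angle $\alpha$ eliminated between equations (\ref{Kplane24}) and (\ref{Kplane25}) does not introduce spurious branches and that the resulting expression is smooth in $(a_{01},a_{02},a_{03})$ throughout the admissible Dekster--Wilker domain $DW_{\mathbb{H}_{K}^{3}}(\ell,s)$ (respectively $DW_{\mathbb{S}_{1/\sqrt{K}}^{3}}(\ell,s)$). In particular one needs $\sin Ka_{0i}\ne 0$ and $\cos Kh_{0,12}\ne 0$, which follow from the Dekster--Wilker edge bounds combined with the convexity radius estimate, so the elimination is globally legitimate and the necessary condition is established.
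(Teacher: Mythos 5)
Your proposal follows essentially the same route as the paper: the paper's proof of Theorem~\ref{thm1vartetrahedron} consists of the single sentence that it is a direct consequence of Proposition~\ref{propa04a01a02a03} together with Theorems~\ref{floatmultitreehn} and~\ref{floatmultitreesn} for $N=4$, and your argument simply spells out those same three ingredients (the variational subtraction yielding the branch equations, the elimination of $a_{04}$ via Proposition~\ref{propa04a01a02a03}, and the isoperimetric volume bound). Your added care about smoothness of the eliminated dihedral angle on the Dekster--Wilker domain goes beyond what the paper records, but the underlying approach is identical.
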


\begin{proof}
It is a direct consequence of Proposition~\ref{propa04a01a02a03} and Theorems~\ref{floatmultitreehn},~\ref{floatmultitreesn} for $N=4.$
\end{proof}


In \cite[Problem~5, Theorem~3]{Zachos:21}, we study the weighted Fermat-Steiner problem for a boundary tetrahedron $A_{1}A_{2}A_{3}A_{4}$ in $\mathbb{R}^{3}.$
The weighted Fermat-Steiner solution is a tree having two weighted Fermat (Steiner) points $A_{0}$ and $A_{0}^{\prime}$ inside $A_{1}A_{2}A_{3}A_{4}.$ The weighted Fermat Steiner tree topology consists of the branches (line segments) $\{A_{1}A_{0},A_{2}A_{0},A_{0}A_{0}^{\prime},A_{0}^{\prime}A_{3},A_{0}^{\prime}A_{4}\},$ with
corresponding weights\\ $\{B_{1},B_{2},\frac{B_{0}+B_{0}^{\prime}}{2},B_{3},B_{4}\}.$ We call degree of intelligence of a a weighted Fermat-Steiner network with respect to a boundary $N$-simplex in $\mathbb{R}^{N}$ the number of weighted Fermat-Steiner points it possesses inside the simplex. Thus, a full weighted Fermat-Steiner network for boundary tetrahedra having two weighted Fermat-Steiner points has two degrees of intelligence.

In \cite[Figures~8,~9,pp.~18-19]{Alexandroff:61}, P. Alexandrov makes an elegant exposition of algebraic complexes by considering them as higher dimensional generalization of ordinary directed polygonal paths, taking into account that a line which is traversed twice in opposite directions does not count. By applying this technique to a weighted Fermat-Steiner network for boundary tetrahedra, we may isolate the two degrees of intelligence in two non-intersecting triangles $\triangle A_{1}A_{2}A_{0}$ and $\triangle A_{1}A_{2}A_{0}^{\prime}.$

We consider the following two networks:\\

$\bullet$ a weighted Fermat-Steiner tree for $A_{1}A_{2}A_{3}A_{4},$ such that:
\[f(A_{0},A_{0}^{\prime})=B_{1}A_{1}A_{0}+ B_{2}A_{2}A_{0}+B_{3}A_{3}A_{0}^{\prime}+B_{4}A_{4}A_{0}^{\prime}+\frac{B_{0}+B_{0}^{\prime}}{2}A_{0}A_{0}^{\prime}\to min\]

$\bullet$ the two broken lines formed by the boundaries $\triangle A_{1}A_{2}A_{0}$ and $A_{3}A_{4}A_{0}^{\prime}$
connected by their minimum distance $A_{0}A_{0}^{\prime}$

By using the orientation \[A_{0}\to A_{1}\to A_{2} \to A_{0}\to A_{0}^{\prime}\to A_{3}\to A_{4}\to A_{0}^{\prime}\to A_{0},\]

we obtain the following proposition, which deals with isolated intelligence of the boundaries $\triangle A_{1}A_{2}A_{0}$ and $A_{3}A_{4}A_{0}^{\prime}.$

\begin{proposition}[A directed weighted Fermat-Steiner Frechet isolated multitree for Frechet tetrahedra in $\mathbb{R}^{3}$ in the sense of P. Alexandrov]

If the orientation \[A_{0}\to A_{1}\to A_{2} \to A_{0}\to A_{0}^{\prime}\to A_{3}\to A_{4}\to A_{0}^{\prime}\to A_{0},\] for the boundary triangles $\triangle A_{1}A_{2}A_{0}$ and $A_{3}A_{4}A_{0}^{\prime}$ occurs for incogruent boundary tetrahedra derived by a sextuple of positive real numbers under the conditions of Blumenthal, Herzog, Dekster-Wilker, such that
$\{A_{1}A_{0},A_{2}A_{0},A_{0}A_{0}^{\prime},A_{0}^{\prime}A_{3},A_{0}^{\prime}A_{4}\}$ is a union of weighted Fermat-Steiner trees (Fermat-Steiner Frechet multitree) with corresponding weights\\ $\{B_{1},B_{2},\frac{B_{0}+B_{0}^{\prime}}{2},B_{3},B_{4}\}$ for the boundary tetrahedron $A_{1}A_{2}A_{3}A_{4},$ then we derive two boundary triangles with two isolated degrees of intelligence for all incogruent boundary tetrahedra (Frechet multitetrahedron).
\end{proposition}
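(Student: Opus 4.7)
The plan is to apply Alexandrov's principle that an oriented segment traversed once in each direction contributes nothing to the associated algebraic 1-chain. First I would write down the 1-chain obtained by concatenating, with signs, the eight oriented arcs in the prescribed cycle
\[A_{0}\to A_{1}\to A_{2}\to A_{0}\to A_{0}'\to A_{3}\to A_{4}\to A_{0}'\to A_{0}.\]
Inspection shows that the arc $A_{0}A_{0}'$ appears twice, once with orientation $A_{0}\to A_{0}'$ and once with orientation $A_{0}'\to A_{0}$, so its two contributions cancel in the sense of Alexandrov. What remains as a nonzero 1-cycle is the disjoint sum of the two oriented triangular boundaries $A_{0}\to A_{1}\to A_{2}\to A_{0}$ and $A_{0}'\to A_{3}\to A_{4}\to A_{0}'$, that is, the cycles $\partial\triangle A_{1}A_{2}A_{0}$ and $\partial\triangle A_{3}A_{4}A_{0}'$.

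Next I would invoke the existence and uniqueness of the weighted Fermat-Steiner tree with the stated topology and weights from Theorem~3 of \cite{Zachos:21}, applied to every incongruent tetrahedron in the given Frechet multitetrahedron. The Blumenthal-Hertog enumeration together with the Dekster-Wilker condition $(\ell,s)\in DW_{\mathbb{R}^{3}}(\ell,s)$ guarantees that the sextuple produces at most thirty incongruent tetrahedra, and each carries a full Fermat-Steiner tree with two interior Steiner nodes $A_{0},A_{0}'$ and edge set $\{A_{1}A_{0},A_{2}A_{0},A_{0}A_{0}',A_{0}'A_{3},A_{0}'A_{4}\}$ weighted by $\{B_{1},B_{2},\tfrac{B_{0}+B_{0}'}{2},B_{3},B_{4}\}$. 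The union of these trees is, by Definition~\ref{FSFtree}, the Fermat-Steiner Frechet multitree.

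Third, I would combine the two ingredients: Alexandrov cancellation applied tetrahedron-by-tetrahedron strips the common segment $A_{0}A_{0}'$ from every member of the multitree and leaves the two triangular cycles, each hosting exactly one Fermat-Steiner point (one degree of intelligence). Taking the union over all incongruent tetrahedra delivers the required pair of multi-boundary triangles $\bigcup\triangle A_{1}A_{2}A_{0}$ and $\bigcup\triangle A_{3}A_{4}A_{0}'$, exhibiting two \emph{isolated} degrees of intelligence for the whole Frechet multitetrahedron.

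The main obstacle will be making the ``isolation'' claim geometric rather than purely formal: the triangles $\triangle A_{1}A_{2}A_{0}$ and $\triangle A_{3}A_{4}A_{0}'$ must lie in distinct affine planes of $\mathbb{R}^{3}$ and the two Fermat-Steiner points must remain strictly separated. To handle this I would use the strict inequalities of Blumenthal-Hertog and of Dekster-Wilker to exclude the degenerate configurations in which $A_{0}$ and $A_{0}'$ coalesce or in which either triangle flattens onto a face of the tetrahedron; under the stated domain these degeneracies are ruled out. A secondary subtlety concerns the weight $\tfrac{B_{0}+B_{0}'}{2}$ carried by the cancelled segment: since that edge disappears from the algebraic chain it does not enter the intelligence count on either isolated triangle, although it continues to contribute to the underlying length functional being minimized, so the decomposition into isolated intelligences is compatible with the weighted Fermat-Steiner variational problem.
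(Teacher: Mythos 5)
Your proposal is correct and follows essentially the same route as the paper: Alexandrov cancellation of the doubly-traversed segment $A_{0}A_{0}^{\prime}$ (arcs $A_{0}\to A_{0}^{\prime}$ and $A_{0}^{\prime}\to A_{0}$) leaves the two oriented triangular boundaries $\partial\triangle A_{1}A_{2}A_{0}$ and $\partial\triangle A_{3}A_{4}A_{0}^{\prime}$, each carrying one isolated degree of intelligence, and the Blumenthal--Herzog--Dekster--Wilker conditions extend this tetrahedron-by-tetrahedron to the whole Frechet multitetrahedron. Your added remarks on the geometric (rather than purely formal) separation of $A_{0}$ and $A_{0}^{\prime}$ and on the fate of the weight $\tfrac{B_{0}+B_{0}^{\prime}}{2}$ go beyond what the paper records but do not change the argument.
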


\begin{proof}
The orientation \[A_{0}\to A_{1}\to A_{2} \to A_{0}\to A_{0}^{\prime}\to A_{3}\to A_{4}\to A_{0}^{\prime}\to A_{0}\]
in the sense of P. ALexandrov cancels the line segment $A_{0}$ and $A_{0}^{\prime}$ from the weighted Fermat-Steiner tree $\{A_{1}A_{0},A_{2}A_{0},A_{0}A_{0}^{\prime},A_{0}^{\prime}A_{3},A_{0}^{\prime}A_{4}\},$ which leads to two  boundaries $\triangle A_{1}A_{2}A_{0}$ and $A_{3}A_{4}A_{0}^{\prime},$ with two isolated degrees of intelligence at the vertices $A_{0}$ and $A_{0}^{\prime}.$ Therefore, taking into account the conditions for a sextuple of positive real numbers determining the edge lengths of a maximum of thirty incongruent tetrahedra (Frechet multitetrahedron) studied by Blumenthal, Herzog and Dekster Wilker, we may get a union of isolated boundary triangles, which correspond to the weighted Fermat-Steiner Frechet tree (multitree) for the Frechet multitetrahedron.
\end{proof}

In \cite{EdelsteinScwartz:76} and \cite{Eremenko:09}, Edelstein, Schwartz and Eremenko proved Gehrink's problem on linked curves in $\mathbb{R}^{3}.$

\begin{lemma}{\cite{EdelsteinScwartz:76}, \cite{Eremenko:09}}\label{gehringr3}
If $C_{1}$ and $C_{2}$ are two linked closed curves (any continuous deformation of $C_{1}$ to a point intersect $C_{2}$) in $\mathbb{R}^{3}$ and the distance between $C_{1}$ and $C_{2}$ is 1, then the length of $C_{1}$ or $C_{2}$ is at least $2\pi.$
\end{lemma}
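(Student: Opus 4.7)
The plan is to reduce the three-dimensional linked-curves estimate to a planar winding-number inequality via a $1$-Lipschitz projection, after first rescaling so that the distance between $C_{1}$ and $C_{2}$ is exactly $1$. The goal then is $\max(L(C_{1}),L(C_{2}))\ge 2\pi$, where $L$ denotes length; by symmetry it suffices to derive $L(C_{2})\ge 2\pi$ after possibly relabelling the curves.

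The key planar input, which I would establish first, is the following: if $\gamma$ is a closed rectifiable curve in $\mathbb{R}^{2}$ with non-zero winding number around a point $p$ and $\operatorname{dist}(p,\gamma)\ge 1$, then $L(\gamma)\ge 2\pi$. The proof is immediate in polar coordinates $(r,\theta)$ centred at $p$, since the arclength element satisfies $ds^{2}=dr^{2}+r^{2}\,d\theta^{2}\ge d\theta^{2}$ on $\{r\ge 1\}$, whence $L(\gamma)\ge \oint_{\gamma}|d\theta|\ge 2\pi$. To deploy this lemma in $\mathbb{R}^{3}$, I would pick a point $p\in C_{1}$ attaining $\operatorname{dist}(p,C_{2})=1$ and a plane $P$ through $p$; orthogonal projection $\pi\colon\mathbb{R}^{3}\to P$ is $1$-Lipschitz, so $L(\pi(C_{2}))\le L(C_{2})$, and by Pythagoras $|p-c|^{2}=|p-\pi(c)|^{2}+|\pi(c)-c|^{2}\ge|p-\pi(c)|^{2}$ for every $c\in C_{2}$, giving $\operatorname{dist}(p,\pi(C_{2}))\ge 1$. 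If in addition the winding number of $\pi(C_{2})$ around $p$ is non-zero, the planar lemma delivers $L(C_{2})\ge L(\pi(C_{2}))\ge 2\pi$.

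The main obstacle is selecting the plane $P$ so that the winding number of $\pi(C_{2})$ around $p$ captures the topological linking $\operatorname{lk}(C_{1},C_{2})\ne 0$: a single orthogonal projection may cancel positively and negatively winding strands, because $C_{1}$ is not a straight line through $p$ and its curvature off $p$ can disturb the Gauss-type count. My remedy is to average over a one-parameter family of planes $\{P_{t}\}$ through $p$ (for instance, the planes perpendicular to the tangent directions of $C_{1}$ along a short arc through $p$) and apply a Crofton-type identity: the average of the planar winding number of $\pi_{t}(C_{2})$ around $p$ equals $\operatorname{lk}(C_{1},C_{2})\ne 0$ up to a normalisation, so on a set of parameters $t$ of positive measure the winding is at least $1$, and the planar lemma applies to that $t$. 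A secondary difficulty is the merely rectifiable regularity of $C_{1}$ and $C_{2}$, which I would handle by Hausdorff approximation by $C^{\infty}$ curves of comparable length and linking number, invoking the smooth case, and passing to the limit via lower semicontinuity of length. Sharpness of the constant $2\pi$ is confirmed by the Hopf configuration of two orthogonal unit circles at mutual distance $1$, each of length exactly $2\pi$.
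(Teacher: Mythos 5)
The paper itself offers no proof of this lemma --- it is imported verbatim from Edelstein--Schwartz and Eremenko --- so the comparison can only be with the known arguments in those references. Your planar lemma and the sharpness example (two unit circles, each through the other's centre) are both correct, but the reduction from $\mathbb{R}^{3}$ to the plane contains a fatal sign error. From $|p-c|^{2}=|p-\pi(c)|^{2}+|\pi(c)-c|^{2}$ you may only conclude $|p-\pi(c)|\le|p-c|$: orthogonal projection onto a plane through $p$ can only \emph{decrease} the distance to $p$, so the hypothesis $\operatorname{dist}(p,\pi(C_{2}))\ge 1$ of your planar lemma does not follow. Indeed, a point of $C_{2}$ lying on the line through $p$ perpendicular to $P$ projects exactly onto $p$, in which case the winding number is not even defined. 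What your scheme actually requires is that $C_{2}$ avoid the open solid \emph{cylinder} of radius $1$ about the projection axis, whereas the hypothesis only guarantees avoidance of the \emph{ball} $B(p,1)$. The averaging step is also unsubstantiated: the winding number of $\pi_{v}(C_{2})$ about $\pi_{v}(p)$ equals the linking number of $C_{2}$ with the straight line $p+\mathbb{R}v$, and $\operatorname{lk}(C_{1},C_{2})\neq 0$ does not force any straight line through a single point of $C_{1}$ to be linked with $C_{2}$, nor is there a Crofton identity expressing $\operatorname{lk}(C_{1},C_{2})$ as an average of such planar winding numbers over your family of planes.

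The standard repair --- and essentially the route taken in the cited sources --- is to replace orthogonal projection onto a plane by \emph{radial} projection $f(x)=p+\frac{x-p}{|x-p|}$ onto the unit sphere $S(p,1)$. This map is $1$-Lipschitz precisely on the region $\{x:|x-p|\ge 1\}$, which is exactly what the distance hypothesis supplies, so $L(f(C_{2}))\le L(C_{2})$ with no cylinder condition needed. The topological input then enters through a spherical, not planar, statement: using the linking one shows that the image $f(C_{2})$ contains a pair of antipodal points of $S(p,1)$ (equivalently, is not confined to one side of any great circle), and a closed curve on a unit sphere passing through two antipodal points consists of two arcs each of length at least the intrinsic distance $\pi$, giving $L(C_{2})\ge 2\pi$. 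I would suggest rebuilding your argument around this spherical projection; the planar winding-number lemma, while true, cannot be fed the hypotheses it needs in this problem.
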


\begin{proposition}[Gehring's linked curved inequality associated with the weighted Fermat-Steiner Frechet multitree in $\mathbb{R}^{3}$]\label{Gehringr3fermatsteiner}
If $A_{1}A_{2}A_{3}A_{4}$ belongs to a union of incongruent tetrahedra, which give the Frechet multitetrahedron,  formed by a given sextuple of positive real numbers, determining edge lengths, $C_{1}\equiv boundary(\triangle A_{1}A_{0}A_{0}^{\prime}),$ $C_{2}\equiv boundary (\triangle A_{2}A_{3}A_{4}),$ $A_{0},$ $A_{0}^{\prime}$ the corresponding Fermat-Steiner points and \\$dist(\triangle A_{1}A_{0}A_{0}^{\prime}, \triangle A_{2}A_{3}A_{4}))=1,$
then the length of $boundary(\triangle A_{1}A_{0}A_{0}^{\prime})$ or $boundary (\triangle A_{2}A_{3}A_{4})$
is at least $2\pi.$
\end{proposition}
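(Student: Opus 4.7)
The plan is to reduce the proposition directly to Gehring's linked-curve inequality (Lemma~\ref{gehringr3}), since the hypotheses already isolate a distance condition and the Fermat-Steiner geometry is designed precisely to produce the required linking. First I would record the two closed curves explicitly: set
\[
C_{1}=\partial(\triangle A_{1}A_{0}A_{0}^{\prime})=[A_{1}A_{0}]\cup [A_{0}A_{0}^{\prime}]\cup [A_{0}^{\prime}A_{1}],
\qquad
C_{2}=\partial(\triangle A_{2}A_{3}A_{4})=[A_{2}A_{3}]\cup[A_{3}A_{4}]\cup[A_{4}A_{2}],
\]
each a piecewise linear Jordan curve in $\mathbb{R}^{3}$. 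By hypothesis $\operatorname{dist}(C_{1},C_{2})=1$, which also guarantees $C_{1}\cap C_{2}=\emptyset$, so the linking number is well defined.

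Next I would verify that $C_{1}$ and $C_{2}$ are linked in the sense of Lemma~\ref{gehringr3}. This is where the Fermat-Steiner tree topology $\{A_{1}A_{0},A_{2}A_{0},A_{0}A_{0}^{\prime},A_{0}^{\prime}A_{3},A_{0}^{\prime}A_{4}\}$ and the P. Alexandrov orientation $A_{0}\to A_{1}\to A_{2}\to A_{0}\to A_{0}^{\prime}\to A_{3}\to A_{4}\to A_{0}^{\prime}\to A_{0}$ used in the preceding proposition play the decisive role. Because $A_{0},A_{0}^{\prime}$ are interior Fermat-Steiner points of the tetrahedron $A_{1}A_{2}A_{3}A_{4}$, the open segment $A_{0}A_{0}^{\prime}$ crosses the relative interior of the triangle $\triangle A_{2}A_{3}A_{4}$ after the formal cancellation of the pairs $A_{0}A_{0}$ and $A_{0}^{\prime}A_{0}^{\prime}$ in the Alexandrov orientation; equivalently, any disk spanning $C_{1}$ that is swept out by continuously contracting $C_{1}$ to the point $A_{0}$ must cross $\triangle A_{2}A_{3}A_{4}$ along a segment joining $A_{0}$ to $A_{0}^{\prime}$. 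This is exactly the non-trivial linking condition required.

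With linking established and $\operatorname{dist}(C_{1},C_{2})=1$, Lemma~\ref{gehringr3} applies and gives
\[
\max\bigl(\operatorname{length}(C_{1}),\operatorname{length}(C_{2})\bigr)\ge 2\pi,
\]
so at least one of $\partial(\triangle A_{1}A_{0}A_{0}^{\prime})$ and $\partial(\triangle A_{2}A_{3}A_{4})$ has length at least $2\pi$, as claimed. Finally I would remark that the hypothesis that $A_{1}A_{2}A_{3}A_{4}$ lies in the Frechet multitetrahedron produced by the Dekster-Wilker and Blumenthal-Herzog conditions ensures that the whole Fermat-Steiner multitree construction of the previous proposition is available, so the two isolated oriented triangles $\triangle A_{1}A_{0}A_{0}^{\prime}$ and $\triangle A_{2}A_{3}A_{4}$ really do arise for each incongruent member of the family.

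The main obstacle I foresee is the rigorous verification of the linking step: one must check that the configuration of the two interior Fermat-Steiner points inside a (possibly very irregular) tetrahedron from the Dekster-Wilker class really forces $\operatorname{lk}(C_{1},C_{2})\ne 0$, and not only in a generic regular case. A clean way to do this is to use the fact that the segment $A_{0}A_{0}^{\prime}$ is a chord of the convex tetrahedron meeting the interior of $\triangle A_{2}A_{3}A_{4}$ transversally in exactly one point, which contributes $\pm 1$ to the algebraic intersection number of a spanning disk of $C_{1}$ with $C_{2}$; all remaining crossings of the spanning disk with the edges of $C_{2}$ can be removed by a small perturbation since they lie on the boundary of a convex face. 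Once this transversal-intersection argument is made precise, the appeal to Lemma~\ref{gehringr3} is immediate.
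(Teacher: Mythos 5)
Your overall route is the same as the paper's: both of you conclude by a direct appeal to Lemma~\ref{gehringr3} with $C_{1}=\partial(\triangle A_{1}A_{0}A_{0}^{\prime})$ and $C_{2}=\partial(\triangle A_{2}A_{3}A_{4})$. You go further than the paper in one respect — you correctly identify that the lemma only applies if the two curves are actually \emph{linked}, a hypothesis the paper's one-line proof never addresses — but the argument you give for linking is where the proposal breaks down.

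The decisive step fails: you assert that the open segment $A_{0}A_{0}^{\prime}$ "crosses the relative interior of the triangle $\triangle A_{2}A_{3}A_{4}$," and later that it is "a chord of the convex tetrahedron meeting the interior of $\triangle A_{2}A_{3}A_{4}$ transversally in exactly one point." Both claims are false. The points $A_{0}$ and $A_{0}^{\prime}$ are interior Fermat--Steiner points of the convex tetrahedron $A_{1}A_{2}A_{3}A_{4}$, so the segment $[A_{0}A_{0}^{\prime}]$ lies entirely in the open interior and is disjoint from the boundary face $\triangle A_{2}A_{3}A_{4}$. More strongly, all three vertices $A_{1}$, $A_{0}$, $A_{0}^{\prime}$ of $C_{1}$ lie strictly on one side of the plane spanned by $A_{2},A_{3},A_{4}$ (the vertex $A_{1}$ by nondegeneracy of the tetrahedron, and $A_{0},A_{0}^{\prime}$ because interior points of a convex body lie strictly on the interior side of each supporting face). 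Hence the flat triangle $A_{1}A_{0}A_{0}^{\prime}$ is itself a spanning disk for $C_{1}$ that never meets the plane of $C_{2}$, so $C_{1}$ contracts to a point without touching $C_{2}$: the two curves are unlinked, and Lemma~\ref{gehringr3} yields nothing. No small-perturbation or transversality refinement can repair this, since the obstruction is a strict separation by a plane. (The paper's own proof simply applies the lemma without checking linking, so it is silent on exactly the point where your more careful attempt reveals the difficulty.)
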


\begin{proof}
By applying lemma!\ref{gehringr3}, for $C_{1}\equiv boundary(\triangle A_{1}A_{0}A_{0}^{\prime}),$ $C_{2}\equiv boundary (\triangle A_{2}A_{3}A_{4}),$ we derive that the length of $boundary(\triangle A_{1}A_{0}A_{0}^{\prime})$ or $boundary (\triangle A_{2}A_{3}A_{4})$ is at least $2\pi.$
\end{proof}

\begin{remark}
A generalization of Proposition~\ref{Gehringr3fermatsteiner} for Steiner trees in $\mathbb{R}^{3}$ may give a new perspective to the Steiner ratio conjecture, which was proved for manifolds by Ivanov-Tuzhlin-Cieslik in \cite{IvanovTuzhilinCieslik:03}.
\end{remark}

\section{Some computations on the variation of Fermat trees for tetrahedra having one or three vertices at infinity in $\mathbb{R}^{3}$}
In this section, we present a new class of weighted Fermat trees for a tetrahedron $A_{1}A_{2}A_{3}A_{4}$ having the vertex $A_{4}$ at infinity or the three vertices $A_{1},$ $A_{2},$ $A_{3}$  at infinity, such that $A_{4}$ belongs to the perpendicular line w.r to the plane defined by $\triangle A_{1}A_{2}A_{3}$ at the corresponding weighted Fermat point $A_{0,123}$ of $\triangle A_{1}A_{2}A_{3},$ which is derived by setting $B_{4}=0$ in the weighted Problem for $A_{1}A_{2}A_{3}A_{4}$ and the length of $A_{4}A_{0,123}$ is a large positive real number $M,$ for $B_{4}=1,$ and $B_{i}=B_{i}(M),$ are positive linear function w.r. to $M,$ for $i=1,2,3.$

Let $A_{1}A_{2}A_{3}A_{4}$ be a tetrahedron having the vertex $A_{4}$ at infinity in $\mathbb{R}^{3},$ with corresponding weights
$B_{1}(M)=b_{1}M+c_{1}>0,$ $B_{2}(M)=b_{2}M+c_{2}>0,$ and $B_{3}(M)=b_{3}M+c_{3}>0,$ $B_{4}=1$ having positive real values for a given large number $M$ and given real numbers $b_{i},$ $c_{i}$ for $i=1,2,3.$

We denote by $A_{0,123}$ the corresponding weighted Fermat-Torricelli point of $\triangle A_{1}A_{2}A_{3}$
for given weights $B_{1}(M),$ $B_{2}(M)$ and $B_{3}(M),$ which satisfy the inequalities (\ref{ineqq1}) of the weighted floating case of Theorem~\ref{theorrn1} by setting $B_{4}=0.$


We set $\varphi\equiv \angle A_{0,123}A_{1}A_{3},$ $a_{i,0123}\equiv \|A_{0,123}A_{i}\|$ and $B_{i}\equiv B_{i}(M),$ for $i=1,2,3.$

\begin{lemma}{\cite{Zachos:13}}\label{corlem1} The exact position of the weighted Fermat-Torricelli tree w.r to  $\triangle A_{1}A_{2}A_{3}$  is given by:
\begin{equation}\label{mainres}
\varphi=\operatorname{arccot}\left(\frac{\sin(\alpha_{213})-\cos(\alpha_{213})
\cot(\arccos{\frac{B_{3}^{2}-B_{1}^2-B_{2}^2}{2B_{1}B_{2}}})-
\frac{a_{13}}{a_{12}
}\cot(\arccos{\frac{B_{2}^{2}-B_{1}^2-B_{3}^2}{2B_{1}B_{3}}})}
{-\cos(\alpha_{213})-\sin(\alpha_{213})
\cot(\arccos{\frac{B_{3}^{2}-B_{1}^2-B_{2}^2}{2B_{1}B_{2}}})+
\frac{a_{13}}{a_{12} }}\right)
\end{equation}
and
\begin{equation}\label{a10123}
a_{1,0123}=\frac{\sin\left(\varphi+\arccos{\frac{B_{2}^{2}-B_{1}^2-B_{3}^2}{2B_{1}B_{3}}}\right)a_{13}}{\sin\left(\arccos{\frac{B_{2}^{2}-B_{1}^2-B_{3}^2}{2B_{1}B_{3}}}\right)}.
\end{equation}

\end{lemma}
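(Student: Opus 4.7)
The plan is to reduce the statement to two triangle computations that share a common edge at $A_{1}A_{0,123}$. First I would recall the classical characterization of the planar weighted Fermat--Torricelli point: when (\ref{ineqq1}) of Theorem~\ref{theorrn1} holds for $N=3$ (so that the floating case applies with $B_{4}=0$), the interior minimizer $A_{0,123}$ is determined by the vanishing of the weighted sum of outward unit vectors. This gives the three angles at $A_{0,123}$ purely in terms of the weights:
\[
\theta_{12}\equiv\angle A_{1}A_{0,123}A_{2}=\arccos\frac{B_{3}^{2}-B_{1}^{2}-B_{2}^{2}}{2B_{1}B_{2}},\qquad
\theta_{13}\equiv\angle A_{1}A_{0,123}A_{3}=\arccos\frac{B_{2}^{2}-B_{1}^{2}-B_{3}^{2}}{2B_{1}B_{3}}.
\]
Because $A_{0,123}$ lies in the interior of $\triangle A_{1}A_{2}A_{3}$, the angle $\alpha_{213}$ at $A_{1}$ is split as $\angle A_{2}A_{1}A_{0,123}=\alpha_{213}-\varphi$ and $\angle A_{0,123}A_{1}A_{3}=\varphi$.

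Next I would apply the Euclidean sine rule in the two subtriangles $\triangle A_{1}A_{3}A_{0,123}$ and $\triangle A_{1}A_{2}A_{0,123}$. From the first,
\[
a_{1,0123}=\frac{a_{13}\sin\!\bigl(\varphi+\theta_{13}\bigr)}{\sin\theta_{13}},
\]
which is precisely the claimed formula (\ref{a10123}) once $\varphi$ is known. From the second,
\[
a_{1,0123}=\frac{a_{12}\sin\!\bigl(\alpha_{213}-\varphi+\theta_{12}\bigr)}{\sin\theta_{12}}.
\]
Equating these two expressions eliminates $a_{1,0123}$ and produces a single scalar equation in the unknown $\varphi$, with all the other quantities ($\alpha_{213}$, $a_{12}$, $a_{13}$, $\theta_{12}$, $\theta_{13}$) known from the data.

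The remaining step is the trigonometric manipulation. I would expand $\sin(\alpha_{213}-\varphi+\theta_{12})$ and $\sin(\varphi+\theta_{13})$ by the addition formula, divide through by $\cos\varphi$, collect the $\tan\varphi$ terms, and then invert to obtain $\cot\varphi$. After dividing numerator and denominator by $\sin\theta_{12}\sin\theta_{13}\,a_{12}$ and using $\sin(\alpha_{213}+\theta_{12})/\sin\theta_{12}=\cos\alpha_{213}+\sin\alpha_{213}\cot\theta_{12}$ together with the analogous identity for cosine, one recovers exactly the quotient displayed in (\ref{mainres}) (up to a simultaneous sign change of numerator and denominator, which leaves the $\operatorname{arccot}$ invariant). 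Substituting the resulting $\varphi$ back into the sine-rule expression from $\triangle A_{1}A_{3}A_{0,123}$ yields (\ref{a10123}).

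The only real obstacle is bookkeeping: one must verify that the geometric configuration of the floating case genuinely places $A_{0,123}$ inside $\triangle A_{1}A_{2}A_{3}$, so that the decomposition $\alpha_{213}=(\alpha_{213}-\varphi)+\varphi$ with both parts positive is valid and the $\operatorname{arccot}$ returns the correct branch. This follows from (\ref{ineqq1}) with $B_{4}=0$, which is exactly the weighted interiority criterion of Kupitz--Martini invoked in Theorem~\ref{theorrn1}; once that is checked, the trigonometric derivation above is purely algebraic.
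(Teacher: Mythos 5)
Your derivation is correct: the angles at the weighted Fermat--Torricelli point are fixed by the vanishing of $\sum B_i\vec{u}_i$, giving $\cos\angle A_iA_0A_j=\frac{B_k^2-B_i^2-B_j^2}{2B_iB_j}$, and equating the two sine-rule expressions for $a_{1,0123}$ from $\triangle A_1A_3A_{0,123}$ and $\triangle A_1A_2A_{0,123}$ does produce exactly the quotient in (\ref{mainres}) after dividing by $a_{12}\sin\varphi$ (I checked the algebra; in fact no sign flip of numerator and denominator is even needed). The present paper gives no proof of this lemma at all --- it is imported verbatim from \cite{Zachos:13} --- so there is nothing internal to compare against; your argument is the natural self-contained proof of the Euclidean case, and your closing remark about interiority (via (\ref{ineqq1}) with $B_4=0$, which guarantees $0<\varphi<\alpha_{213}$ and hence the correct branch of $\operatorname{arccot}$ on $(0,\pi)$) closes the only delicate point.
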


From Lemma~\ref{corlem1}, we derive that:

\begin{lemma}\label{corol11}
The line segments $a_{2,0123}$ and $a_{3,0123}$ depend on $B_{1},$ $B_{2},$ $B_{3},$ $a_{12},$ $a_{13},$ $a_{23}$ and $\varphi:$
\begin{equation}\label{a20123}
a_{2,0123}=\sqrt{a_{1,0123}^2+a_{12}^2-2a_{1,0123}a_{12}\cos(\angle A_{2}A_{1}A_{3}-\varphi)}
\end{equation}
and
\begin{equation}\label{a30123}
a_{3,0123}=\sqrt{a_{1,0123}^2+a_{13}^2-2a_{1,0123}a_{13}\cos(\varphi)}.
\end{equation}

\end{lemma}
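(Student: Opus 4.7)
The plan is to obtain both identities by a direct application of the Euclidean law of cosines in two auxiliary triangles with common vertex $A_{1}$, using only the data already pinned down by Lemma~\ref{corlem1}: the position angle $\varphi=\angle A_{0,123}A_{1}A_{3}$ and the distance $a_{1,0123}$ given by \eqref{a10123}.

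First I would handle $a_{3,0123}$. Consider the triangle $\triangle A_{1}A_{0,123}A_{3}$ inside the plane of $\triangle A_{1}A_{2}A_{3}$. Its sides are $a_{1,0123}$, $a_{13}$, and the unknown $a_{3,0123}$, and by the very definition of $\varphi$ the angle at $A_{1}$ is exactly $\varphi$. The law of cosines in this triangle immediately yields
\[
a_{3,0123}^{2}=a_{1,0123}^{2}+a_{13}^{2}-2\,a_{1,0123}\,a_{13}\cos\varphi,
\]
which is the second displayed formula after taking the positive square root (the length is nonnegative).

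Next I would treat $a_{2,0123}$ analogously in the triangle $\triangle A_{1}A_{0,123}A_{2}$, whose sides are $a_{1,0123}$, $a_{12}$ and $a_{2,0123}$. The only point to verify is that the angle at $A_{1}$ equals $\angle A_{2}A_{1}A_{3}-\varphi$. This is where the geometric hypothesis that $A_{0,123}$ is an \emph{interior} weighted Fermat-Torricelli point of $\triangle A_{1}A_{2}A_{3}$ (guaranteed by the floating inequalities \eqref{ineqq1} with $B_{4}=0$) enters: it forces the ray $A_{1}A_{0,123}$ to lie strictly inside the angular region $\angle A_{2}A_{1}A_{3}$, so by angle addition
\[
\angle A_{2}A_{1}A_{0,123}=\angle A_{2}A_{1}A_{3}-\angle A_{0,123}A_{1}A_{3}=\alpha_{213}-\varphi.
\]
A second application of the law of cosines gives
\[
a_{2,0123}^{2}=a_{1,0123}^{2}+a_{12}^{2}-2\,a_{1,0123}\,a_{12}\cos(\alpha_{213}-\varphi),
\]
which is the first displayed formula.

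The only delicate point — and the place I would be careful — is the sign and orientation issue in identifying $\angle A_{2}A_{1}A_{0,123}$ with $\alpha_{213}-\varphi$ rather than $\varphi-\alpha_{213}$ or $\alpha_{213}+\varphi$; since $\cos$ is even this does not affect the final formula, but logically one must argue that $A_{0,123}$ lies in the interior of $\triangle A_{1}A_{2}A_{3}$, which follows from the floating-case characterization in Theorem~\ref{theorrn1}. Once that is settled, no computation beyond two invocations of the Euclidean law of cosines is required, so I do not anticipate any genuine obstacle in carrying out the argument.
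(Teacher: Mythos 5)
Your proposal is correct and follows exactly the derivation the paper intends (the paper states the lemma as an immediate consequence of Lemma~\ref{corlem1} without writing out the details): two applications of the Euclidean law of cosines in $\triangle A_{1}A_{0,123}A_{3}$ and $\triangle A_{1}A_{0,123}A_{2}$, with the angles at $A_{1}$ being $\varphi$ and $\angle A_{2}A_{1}A_{3}-\varphi$ respectively. Your explicit remark that the identification $\angle A_{2}A_{1}A_{0,123}=\angle A_{2}A_{1}A_{3}-\varphi$ rests on $A_{0,123}$ being an interior (floating) Fermat--Torricelli point is a useful justification that the paper leaves implicit.
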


We assume that $A_{4}$ lies on the normal line w.r. to the plane defined by $\triangle A_{1}A_{2}A_{3},$
for $B_{4}=1$ and the the length of $A_{0,123}A_{4}$ is $M.$


We recall that the Cayley-Menger determinant $D(S)$ is given by:

\begin{equation}\label{CaleyMenger}
D(S) =\operatorname{det} \left(
\begin{array}{ccccc}
0      & a_{12}^2      & a_{13}^2 & a_{14}^2  &  1  \\
a_{12}^2 & 0 & a_{23}^2 &a_{24}^2 & 1          \\
a_{13}^2 & a_{23}^2 &0   &a_{34}^2  & 1         \\
a_{14}^2 & a_{24}^2 &a_{34}^2  & 0   & 1         \\
1 & 1 &1  & 1     &   0    \\
\end{array} \right).
\end{equation}

We recall that $a_{ij}$ is length of the line segment $A_iA_j,$
$\alpha_{ijk}\equiv \angle A_{i}A_{j}A_{k},$ the dihedral
angle $\alpha$ is defined by the planes formed by $\triangle
A_{0}A_{1}A_{2}$ and $\triangle A_{1}A_{2}A_{3},$  the dihedral
angle $\alpha_{g_{4}}$ is defined by the planes formed by $\triangle
A_{1}A_{2}A_{3}$ and $\triangle A_{1}A_{2}A_{4},$ $h_{0,12}$ is
the height of $\triangle A_{0}A_{1}A_{2}$ from $A_{0},$ by
$h_{0,12m}$ the distance of $A_{0}$ from the plane defined by
$\triangle A_{1}A_{2}A_{m},$ for $i,j,k=0,1,2,3,4$ and $m=3,4.$

The variable line segments $a_{03}(a_{01},a_{02},\alpha),$ $a_{04}(a_{01},a_{02},\alpha),$ are derived in \cite[Formulas (2.14), (2.20) p.~116]{Zach/Zou:09}.

\begin{proposition}\label{a34distanceM}

The variable lengths $a_{03}(a_{01},a_{02},\alpha),$ $a_{04}(a_{01},a_{02},\alpha;M),$ are given by:

\begin{equation}\label{impa03}
a_{03}(a_{01},a_{02},\alpha)=\sqrt{a_{02}^2 +a_{23}^2-2 a_{23}[\sqrt{a_{02}^2-h_{0,12}^2}
\cos\alpha_{123} +h_{0,12}\sin\alpha_{123}\cos\alpha ]}
\end{equation}

and

\begin{equation}\label{impa04}
a_{04}(a_{01},a_{02},\alpha;M)=\sqrt{a_{02}^2 +a_{24}^2-2 a_{24}[\sqrt{a_{02}^2-h_{0,12}^2}
\cos\alpha_{124}
+h_{0,12}\sin\alpha_{124}\cos(\alpha_{g_{4}}-\alpha) ]},
\end{equation}
where
\begin{equation}\label{height012}
h_{0,12}=h_{0,12}(a_{01},a_{02},a_{12})=\frac{a_{01}a_{02}}{a_{12}}\sqrt{1-\left(\frac{a_{01}^{2}+a_{02}^{2}-a_{12}^2}{2a_{01}a_{02}}
\right)^{2}}.
\end{equation}
such that:

\begin{equation}\label{lima4}
\lim_{M\to +\infty}a_{04}(a_{01},a_{02},\alpha;M)=+\infty.
\end{equation}

\end{proposition}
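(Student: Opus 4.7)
The plan is to derive the coordinate expressions (\ref{impa03})--(\ref{impa04}) by introducing adapted Cartesian coordinates, obtain the height formula (\ref{height012}) from the two standard expressions for the area of $\triangle A_{0}A_{1}A_{2}$, and finally extract the asymptotic (\ref{lima4}) directly from the defining relation $A_{4}=A_{0,123}+M\hat{n}$.

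For the first step, I place $A_{2}$ at the origin, $A_{1}=(a_{12},0,0)$ on the positive $x$-axis, and $A_{3}$ in the half-plane $\{y>0\}$ of the $xy$-plane, so that $A_{3}=(a_{23}\cos\alpha_{123},\,a_{23}\sin\alpha_{123},\,0)$. By the very definition of the dihedral angle $\alpha$ between the half-planes $A_{1}A_{2}A_{0}$ and $A_{1}A_{2}A_{3}$, one writes $A_{0}=(x_{0,12},\,h_{0,12}\cos\alpha,\,h_{0,12}\sin\alpha)$, and $x_{0,12}=\sqrt{a_{02}^{2}-h_{0,12}^{2}}$ is forced by $\|A_{0}-A_{2}\|^{2}=a_{02}^{2}$. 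Expanding $\|A_{0}-A_{3}\|^{2}$ then gives (\ref{impa03}) by direct cancellation. The analogous representation $A_{4}=(a_{24}\cos\alpha_{124},\,a_{24}\sin\alpha_{124}\cos\alpha_{g_{4}},\,a_{24}\sin\alpha_{124}\sin\alpha_{g_{4}})$, together with the angle-subtraction identity $\cos\alpha\cos\alpha_{g_{4}}+\sin\alpha\sin\alpha_{g_{4}}=\cos(\alpha_{g_{4}}-\alpha)$, yields (\ref{impa04}); this is essentially the computation of \cite[(2.14),(2.20)]{Zach/Zou:09}. The height formula (\ref{height012}) then comes from equating $\tfrac{1}{2}a_{12}h_{0,12}$ with $\tfrac{1}{2}a_{01}a_{02}\sin(\angle A_{1}A_{0}A_{2})$ and substituting $\cos(\angle A_{1}A_{0}A_{2})=(a_{01}^{2}+a_{02}^{2}-a_{12}^{2})/(2a_{01}a_{02})$ from the cosine law.

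For (\ref{lima4}), the decisive observation is that, once $(a_{01},a_{02},\alpha)$ are prescribed, the point $A_{0}$ is completely determined and is independent of $M$, whereas by construction $A_{4}=A_{0,123}+M\hat{n}$ with $\hat{n}$ a unit normal to the plane of $\triangle A_{1}A_{2}A_{3}$. Expanding
\[
a_{04}^{2}=\|A_{0}-A_{0,123}\|^{2}-2M\,(A_{0}-A_{0,123})\cdot\hat{n}+M^{2}
\]
gives a quadratic in $M$ with leading coefficient $1$, so $a_{04}\to+\infty$. Equivalently, one can read this off from (\ref{impa04}) itself: since $\hat{n}\perp A_{1}A_{2}$, the quantity $a_{24}\cos\alpha_{124}$, which is the projection of $\vec{A_{2}A_{4}}$ onto $\vec{A_{2}A_{1}}$, remains bounded in $M$, while $a_{24}\sin\alpha_{124}$ grows linearly in $M$ and $\alpha_{g_{4}}\to\pi/2$, so that the $a_{24}^{2}$ term is the dominant one. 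The main (though routine) obstacle is the sign-convention bookkeeping for $\alpha_{g_{4}}$ and $\hat{n}$: one must place $A_{0}$ and $A_{4}$ in consistent half-spaces relative to the plane of $\triangle A_{1}A_{2}A_{3}$ in order that the cosine-subtraction identity used in the derivation of (\ref{impa04}) genuinely matches the dot product of the two coordinate vectors.
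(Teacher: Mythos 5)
Your proof is correct, and it reaches the same formulas by a somewhat different route than the paper. The paper does not rederive (\ref{impa03})--(\ref{impa04}) at all: it imports them from \cite[Formulas (2.14), (2.20)]{Zach/Zou:09} and devotes the proof to exhibiting the $M$-dependence of the ingredients, namely $a_{41}(M)=\sqrt{M^{2}+a_{1,0123}^{2}}$ and $a_{42}(M)=\sqrt{M^{2}+a_{2,0123}^{2}}$ from the right triangles $\triangle A_{1}A_{0,123}A_{4}$ and $\triangle A_{2}A_{0,123}A_{4}$, and then $\alpha_{124}(M)$, $\alpha_{g_{4}}(M)$, $h_{4,12}(M)$ by substitution, concluding only that $a_{04}=a_{04}(a_{01},a_{02},\alpha;M)$. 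You instead set up adapted Cartesian coordinates and expand $\|A_{0}-A_{3}\|^{2}$ and $\|A_{0}-A_{4}\|^{2}$ directly, which is in effect a self-contained reconstruction of the cited formulas of \cite{Zach/Zou:09}, and your area argument for (\ref{height012}) matches the standard derivation. The genuine added value of your version is the limit (\ref{lima4}): the paper's proof never addresses it explicitly (it is only implicit in $a_{24}(M)\to\infty$), whereas your expansion $a_{04}^{2}=\|A_{0}-A_{0,123}\|^{2}-2M\,(A_{0}-A_{0,123})\cdot\hat{n}+M^{2}$, a monic quadratic in $M$ with bounded lower-order coefficients since $A_{0}$ is fixed once $(a_{01},a_{02},\alpha)$ are, settles it cleanly; your remark about the sign conventions for $\alpha_{g_{4}}$ and $\hat{n}$ (placing $A_{0}$ and $A_{4}$ in consistent half-spaces so that $\cos\alpha\cos\alpha_{g_{4}}+\sin\alpha\sin\alpha_{g_{4}}=\cos(\alpha_{g_{4}}-\alpha)$ really is the relevant dot product) is exactly the point one must check and is handled correctly.
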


\begin{proof}

The angle $\alpha_{123}$
does not depend on $M:$
\begin{equation}\label{cosalapha123}
\cos\alpha_{123}=\frac{a_{12}^2+a_{23}^2-a_{13}^2}{2 a_{12}
a_{23}},
\end{equation}

and

\begin{equation}\label{sinalapha123}
\sin\alpha_{123}=\frac{\sqrt{(a_{12}+a_{23}+a_{13})(a_{23}+a_{13}-a_{12})(a_{12}+a_{13}-a_{23})(a_{12}+a_{23}-a_{13})}}{2
a_{12} a_{23}}
\end{equation}

By replacing (\ref{cosalapha123}), (\ref{sinalapha123}) and (\ref{height012}) in (\ref{impa03})
we derive that $a_{03}=a_{03}(a_{01},a_{02},\alpha).$


We shall show that $a_{41},$ $a_{42},$ $\alpha_{124}$ depend on $M.$

From the right triangles $\triangle A_{1}A_{0,123}A_{4},$ $\triangle A_{2}A_{0,123}A_{4}$ and taking into account
(\ref{a10123}) and (\ref{a20123}),  we get:

\begin{equation}\label{a41}
a_{41}=a_{41}(M)=\sqrt{M^{2}+a_{1,0123}^2}
\end{equation}

and

\begin{equation}\label{a42}
a_{42}=a_{42}(M)=\sqrt{M^{2}+a_{2,0123}^2}.
\end{equation}

where

\begin{equation}\label{a04newa01a02a03}
a_{4}^2=a_{2}^2 +a_{24}^2-2 a_{24}[\sqrt{a_{2}^2-h_{0,12}^2}
\cos\alpha_{124}
+h_{0,12}\sin\alpha_{124}(\cos\alpha_{g_{4}}\cos\alpha+\sin\alpha_{g_{4}}\sin\alpha
           ) ]
\end{equation}

where

\begin{equation}\label{cosalapha124}
\cos\alpha_{124}=\frac{a_{12}^2+a_{24}^2-a_{14}^2}{2 a_{12}
a_{24}},
\end{equation}

\begin{equation}\label{sinalapha124}
\sin\alpha_{124}=\frac{\sqrt{(a_{12}+a_{24}+a_{14})(a_{24}+a_{14}-a_{12})(a_{12}+a_{14}-a_{24})(a_{12}+a_{24}-a_{14})}}{2
a_{12} a_{24}},
\end{equation}


\begin{equation}\label{cosaphag4}
\alpha_{g_{4}}=\arccos\left(
\frac{\left(\frac{a_{42}^2+a_{23}^2-a_{43}^2}{2 a_{23}}
\right)-\sqrt{a_{42}^2-h_{4,12}^2}\cos\alpha_{123}}{h_{4,12}\sin\alpha_{123}}
\right)
\end{equation}

and

\begin{equation}\label{height412}
h_{4,12}=h_{4,12}(a_{41},a_{42},a_{12})=\frac{a_{41}a_{42}}{a_{12}}\sqrt{1-\left(\frac{a_{41}^{2}+a_{42}^{2}-a_{12}^2}{2a_{41}a_{42}}
\right)^{2}}.
\end{equation}

The dihedral angle $\alpha_{g_{4}}$ is derived by setting in
(\ref{impa03}) and (\ref{height012}) the index from $0\to 4.$


By replacing (\ref{a41}), (\ref{a42}), (\ref{cosalapha124}), (\ref{sinalapha124}), (\ref{cosaphag4}), (\ref{height012}) in (\ref{impa04})

 we derive that: $a_{04}=a_{04}(a_{01},a_{02},\alpha;M).$

\end{proof}


The position of the weighted Fermat-Torricelli
tree of a tetrahedron $A_{1}A_{2}A_{3}A_{4}$ is given by computing the
volumes of the tetrahedra $\operatorname{Vol}(A_{0}A_{i}A_{j}A_{k})$ for $i,j,k=1,2,3,4,$
via the Caley-Menger determinant
(\cite[pp.~249-255]{Uspensky:48}), which depend on $a_{1},$ $a_{2}$ ,$\alpha$ and $M.$

We set $C\equiv \frac{\sum_{i=1}^{4}\frac{B_{i}}{a_{i}}}{\operatorname{Vol}(A_{1}A_{2}A_{3}A_{4})}.$

\begin{theorem}\label{tetrahedroninf}
The following four equations provide a necessary condition to
determine the position of the weighted Fermat-Torricelli tree at the interior of $A_{1}A_{2}A_{3}A_{4}:$

\begin{equation}\label{tetraed1}
\left(\frac{B_{3}}{a_{3}\operatorname{Vol}(A_{0}A_{1}A_{2}A_{4})}\right)^{2}=\left(\frac{B_{4}}{a_{4}\operatorname{Vol}(A_{0}A_{1}A_{2}A_{3})}\right)^{2}=C^2,
\end{equation}

\begin{equation}\label{tetraed2}
\left(\frac{B_{3}}{a_{3}\operatorname{Vol}(A_{0}A_{1}A_{2}A_{4})}\right)^{2}=\left(\frac{B_{1}}{a_{1}\operatorname{Vol}(A_{0}A_{2}A_{3}A_{4})}\right)^{2}=C^2,
\end{equation}

\begin{equation}\label{tetraed3}
\left(\frac{B_{3}}{a_{3}\operatorname{Vol}(A_{0}A_{1}A_{2}A_{4})}\right)^{2}=\left(\frac{B_{2}}{a_{2}\operatorname{Vol}(A_{0}A_{1}A_{3}A_{4})}\right)^{2}=C^2,
\end{equation}

and

\begin{equation}\label{tetraed4}
\left(\frac{B_{1}}{a_{1}\operatorname{Vol}(A_{0}A_{2}A_{3}A_{4})}\right)^{2}=\left(\frac{B_{2}}{a_{2}\operatorname{Vol}(A_{0}A_{1}A_{3}A_{4})}\right)^{2}=C^2.
\end{equation}

where
\begin{equation}\label{BiM}
B_{i}(M)=b_{i}M+c_{i}>0,
\end{equation}
for $i=1,2,3$

and

\[B_{4}=1.\]

\end{theorem}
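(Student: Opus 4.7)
The plan is to derive the four volumetric identities from the first-order optimality (vector balance) condition for the weighted Fermat-Torricelli point $A_{0}$ interior to $A_{1}A_{2}A_{3}A_{4}$, transferring the condition from unit directions to volumes via barycentric coordinates in $\mathbb{R}^{3}$.

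First, I would differentiate the objective $\sum_{i=1}^{4}B_{i}\|A_{0}-A_{i}\|$ with respect to the position of $A_{0}$ and set the gradient to zero. This is legitimate in the floating case guaranteed by (\ref{ineqq1}) of Theorem~\ref{theorrn1}, and yields the vector identity
\[
\sum_{i=1}^{4}\frac{B_{i}}{a_{i}}(A_{0}-A_{i})=0,
\]
which I would rewrite as
\[
A_{0}=\sum_{i=1}^{4}\lambda_{i}A_{i},\qquad \lambda_{i}=\frac{B_{i}/a_{i}}{\sum_{j=1}^{4}B_{j}/a_{j}},\qquad \sum_{i=1}^{4}\lambda_{i}=1,
\]
exhibiting $\lambda_{i}$ as the barycentric coordinates of $A_{0}$ relative to the tetrahedron $A_{1}A_{2}A_{3}A_{4}$.

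Next, I would invoke the classical identification of barycentric coordinates with volume ratios in three-space: for an interior point of a non-degenerate tetrahedron,
\[
\lambda_{i}=\frac{\operatorname{Vol}(A_{0}A_{j}A_{k}A_{l})}{\operatorname{Vol}(A_{1}A_{2}A_{3}A_{4})},
\]
where $\{i,j,k,l\}=\{1,2,3,4\}$, i.e.\ the sub-tetrahedron obtained by replacing $A_{i}$ with $A_{0}$. Substituting this into the expression for $\lambda_{i}$ and solving for $B_{i}/(a_{i}\operatorname{Vol}(\cdot))$ gives
\[
\frac{B_{i}}{a_{i}\operatorname{Vol}(A_{0}A_{j}A_{k}A_{l})}=\frac{\sum_{j=1}^{4}B_{j}/a_{j}}{\operatorname{Vol}(A_{1}A_{2}A_{3}A_{4})}=C
\]
for every $i=1,2,3,4$. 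Squaring and pairwise equating these four equal expressions produces precisely (\ref{tetraed1})--(\ref{tetraed4}); squaring also absorbs the sign ambiguity intrinsic to the Cayley-Menger determinant (\ref{CaleyMenger}), which in turn supplies closed-form access to each $\operatorname{Vol}(A_{0}A_{j}A_{k}A_{l})$ purely in terms of the edge lengths $a_{0m}$ and $a_{mn}$.

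I do not anticipate a substantial obstacle. The only care required is to verify that the floating hypothesis is in force (so the gradient genuinely vanishes rather than lying in a one-sided subdifferential at a boundary vertex) and that each sub-tetrahedron $A_{0}A_{j}A_{k}A_{l}$ has strictly positive volume so that $\lambda_{i}>0$; both follow automatically from the assumption that $A_{0}$ is an interior weighted Fermat-Torricelli point, which is implied by the weighted inequalities (\ref{ineqq1}) together with the positivity of $B_{i}(M)=b_{i}M+c_{i}$ from (\ref{BiM}) for admissible $M$. The remaining subtlety is purely bookkeeping, namely keeping the index $i$ in $B_{i}/a_{i}$ correctly paired with the opposite sub-tetrahedron $A_{0}A_{j}A_{k}A_{l}$, which is why the statement packages the four resulting equalities separately as (\ref{tetraed1})--(\ref{tetraed4}) rather than a single chain.
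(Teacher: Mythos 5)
Your argument is correct, and it reaches the paper's key intermediate identity by a genuinely different route. The paper obtains the chain of equalities $\frac{B_{3}}{a_{3}\operatorname{Vol}(A_{0}A_{1}A_{2}A_{4})}=\frac{B_{4}}{a_{4}\operatorname{Vol}(A_{0}A_{1}A_{2}A_{3})}=\frac{B_{1}}{a_{1}\operatorname{Vol}(A_{0}A_{2}A_{3}A_{4})}=\frac{B_{2}}{a_{2}\operatorname{Vol}(A_{0}A_{1}A_{3}A_{4})}=C$ by parametrizing the objective as in (\ref{mainobj1})--(\ref{mainobj3}) and differentiating with respect to the three dihedral angles $\alpha,\alpha^{\prime},\alpha^{\prime\prime}$, invoking Formula (2.25) of \cite{Zach/Zou:09}; it then squares and substitutes the Cayley--Menger expressions (\ref{V0123})--(\ref{V0124}). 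You instead start from the vanishing gradient $\sum_{i}\frac{B_{i}}{a_{i}}(A_{0}-A_{i})=0$, read off that the barycentric coordinates of $A_{0}$ are proportional to $B_{i}/a_{i}$, and identify barycentric coordinates with sub-tetrahedron volume ratios. Your derivation is more elementary and self-contained (it needs no external formula and no dihedral-angle parametrization), and it makes transparent why all four ratios equal the single constant $C$; the paper's route has the advantage of staying inside the $(a_{1},a_{2},\alpha;M)$ coordinate system used throughout Section 6, which is what later feeds the $M\to+\infty$ limit in Theorem~\ref{theor2inf}. Your closing remarks on the floating hypothesis, the strict positivity of the sub-volumes, and the index bookkeeping are exactly the right points of care, so I see no gap.
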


\begin{proof}

The objective function is given by:
\begin{equation}\label{mainobj1}
f(a_{1},a_{2},\alpha;M)=B_{1}a_{1}+B_{2}a_{2}+B_{3}a_{3}(a_{1},a_{2},\alpha)+B_{4}a_{4}(a_{1},a_{2},\alpha;M).
\end{equation}

or

\begin{equation}\label{mainobj2}
f(a_{1},a_{4},\alpha^{\prime};M)=B_{1}a_{1}+B_{4}a_{4}+B_{2}a_{2}(a_{1},a_{4},\alpha^{\prime})+B_{3}a_{3}(a_{1},a_{4},\alpha^{\prime};M).
\end{equation}

or

\begin{equation}\label{mainobj3}
f(a_{2},a_{3},\alpha^{\prime\prime};M)=B_{2}a_{2}+B_{3}a_{3}+B_{1}a_{1}(a_{2},a_{3},\alpha^{\prime\prime})+B_{4}a_{4}(a_{2},a_{3},\alpha^{\prime\prime};M).
\end{equation}

where $\alpha^{\prime}$ is the dihedral angle formed by $\triangle A_{1}A_{0}A_{4}$ and $\triangle A_{1}A_{4}A_{2}$
and $\alpha^{\prime\prime}$ is the dihedral angle formed by $\triangle A_{2}A_{0}A_{3}$ and $\triangle A_{2}A_{1}A_{3}.$

By differentiating (\ref{mainobj1}) w.r to $\alpha,$ (\ref{mainobj2}) w.r. to $\alpha^{\prime}$ and
(\ref{mainobj3}) w.r. to $\alpha^{\prime\prime}$
we derive (see also in \cite[Formula~(2.25),pp.~117]{Zach/Zou:09}):

\begin{eqnarray}\label{tetraed5}
&&\frac{B_{3}}{a_{3}\operatorname{Vol}(A_{0}A_{1}A_{2}A_{4})}=\frac{B_{4}}{a_{4}\operatorname{Vol}(A_{0}A_{1}A_{2}A_{3})}={}\nonumber\\
&&{}=\frac{B_{1}}{a_{1}\operatorname{Vol}(A_{0}A_{2}A_{3}A_{4})}=\frac{B_{2}}{a_{2}\operatorname{Vol}(A_{0}A_{1}A_{3}A_{4})}=C,
\end{eqnarray}

The volume $\operatorname{Vol}(A_{0}A_{i}A_{j}A_{k}),$ for $i,j,k=1,2,3,4$ is given by(\cite[pp.~249-255]{Uspensky:48}):

\begin{equation}\label{V0123}
288 \operatorname{Vol}(A_{0}A_{1}A_{2}A_{3})^{2}
=D(\{a_{1},a_{2},a_{3},a_{23},a_{13},a_{12}\})
\end{equation}

\begin{equation}\label{V0234}
288 \operatorname{Vol}(A_{0}A_{2}A_{3}A_{4})^{2}
=D(\{a_{4},a_{2},a_{3},a_{23},a_{43},a_{42}\})
\end{equation}

\begin{equation}\label{V0134}
288 \operatorname{Vol}(A_{0}A_{1}A_{3}A_{4})^{2}
=D(\{a_{1},a_{4},a_{3},a_{43},a_{13},a_{14}\})
\end{equation}

and

\begin{equation}\label{V0124}
288 \operatorname{Vol}(A_{0}A_{1}A_{2}A_{4})^{2}
=D(\{a_{1},a_{2},a_{4},a_{24},a_{14},a_{12}\}) .
\end{equation}

By squaring both parts of the equations in (\ref{tetraed5}) and
then by replacing (\ref{V0123}), (\ref{V0124}), (\ref{V0134}),
(\ref{V0234}) in the derived equations, we deduce
(\ref{tetraed1}), (\ref{tetraed2}), (\ref{tetraed3}) and
(\ref{tetraed4}) which depend on $a_{1},$ $a_{2},$ $\alpha$ and $M.$

\end{proof}

For $M\to +\infty,$ the solution of the weighted Fermat-Torricelli problem is a weighted Fermat-Torricelli tree
with branches $A_{1}A_{0,123},$ $A_{2}A_{0,123},$ $A_{3}A_{0,123}$ and $A_{4}A_{0,123}.$
We call the weighted Fermat-Torricelli tree for a tetrahedron having one vertex at infinity a large tree because one of the four branches $A_{4}A_{0,123}\to \infty.$

The unique solution of the inverse problem for tetrahedra in $\mathbb{R}^{3}$
has been established in \cite{Zach/Zou:09}.

\begin{problem}{Inverse weighted Fermat problem for tetrahedra in $\mathbb{R}^{3},$\cite{Zach/Zou:09}}\label{invFT}

Given a point $A_{0}$ and a positive real number $C$ which belongs to the interior of
$A_{1}A_{2}A_{3}A_{4}$ in $\mathbb{R}^{3}$, does there exist a
unique set of positive weights $B_{i},$ such that
\begin{displaymath}
 B_{1}+B_{2}+B_{3}+B_{4} = C,
\end{displaymath}
for which $A_{0}$ minimizes
\begin{displaymath}
 f(A_{0})=\sum_{i=1}^{4}B_{i}a_{0i}.
\end{displaymath}
\end{problem}

We denote by $\alpha_{i,j0k}$ the angle that is formulated by the line segment $A_{0}A_{i}$ and the line segment that connects $A_{0}$  with the trace of the orthogonal projection of $A_{i}$ to the plane defined by $\triangle A_{j}A_{0}A_{k}.$ A positive  answer w.r. to the inverse problem for $A_{1}A_{2}A_{3}A_{4}$ is
given in \cite[Proposition~1]{Zach/Zou:09}):

\begin{lemma}\cite[Proposition~1, Solution of Problem~2]{Zach/Zou:09}\label{leminv5}
The weight $B_{i}$ are uniquely determined by the formula:
\begin{equation}\label{inverse111}
B_{i}=\frac{C}{1+\|\frac{\sin{\alpha_{i,k0l}}}{\sin{\alpha_{j,k0l}}}\|+\|\frac{\sin{\alpha_{i,j0l}}}{\sin{\alpha_{k,j0l}}}\|+\|\frac{\sin{\alpha_{i,k0j}}}{\sin{\alpha_{l,k0j}}}\|},
\end{equation}
where

\begin{equation}\label{ratioji2}
\frac{B_{j}}{B_{i}}=\frac{\sin{\alpha_{i,k0l}}}{\sin{\alpha_{j,k0l}}}
\end{equation}

\begin{equation}\label{ratioji}
\frac{\sin{\alpha_{i,k0l}}}{\sin{\alpha_{j,k0l}}}=\sqrt{ \| \frac{\sin^{2}\alpha_{k0m}-\cos^{2}\alpha_{m0i}-\cos^{2}\alpha_{k0i}+2\cos\alpha_{k0m}\cos\alpha_{m0i}\cos\alpha_{k0i}}{
\sin^{2}\alpha_{k0m}-\cos^{2}\alpha_{m0j}-\cos^{2}\alpha_{k0j}+2\cos\alpha_{k0m}\cos\alpha_{m0j}\cos\alpha_{k0j}} \| }
\end{equation}
for $i,j,k,l=1,2,3,4$ and $i \neq j\neq k\neq l.$

\end{lemma}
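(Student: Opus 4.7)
The plan is to exploit the vanishing-gradient condition at the interior weighted Fermat point, extract ratios $B_{j}/B_{i}$ by projecting the vector balance onto suitable normal directions, and then pin down the weights individually through the linear constraint $\sum_{i}B_{i}=C$.

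First I would record the first-order optimality condition. Since $A_{0}$ is interior to $A_{1}A_{2}A_{3}A_{4}$ and minimizes $f(X)=\sum_{i=1}^{4}B_{i}\|X-A_{i}\|$, the gradient of $f$ vanishes at $A_{0}$, giving the vectorial Fermat balance
\[
\sum_{i=1}^{4} B_{i}\,\mathbf{u}_{i}=\mathbf{0},\qquad \mathbf{u}_{i}=\frac{A_{i}-A_{0}}{\|A_{i}-A_{0}\|}.
\]
Interiority forces the four unit vectors $\mathbf{u}_{i}$ to positively span $\mathbb{R}^{3}$; in particular no three of them are coplanar with $A_{0}$, and each pair $(\mathbf{u}_{i},\mathbf{u}_{j})$ lies on opposite sides of the plane through the two remaining directions.

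Next, to obtain $B_{j}/B_{i}$, I would project the balance equation onto the line normal at $A_{0}$ to the plane $P_{kl}$ spanned by $\mathbf{u}_{k}$ and $\mathbf{u}_{l}$. Both $B_{k}\mathbf{u}_{k}$ and $B_{l}\mathbf{u}_{l}$ are annihilated, leaving a signed identity $B_{i}\sin\alpha_{i,k0l}+B_{j}\sin\alpha_{j,k0l}=0$ whose signs record on which side of $P_{kl}$ the vectors $\mathbf{u}_{i},\mathbf{u}_{j}$ sit; by the opposite-sides remark above, passing to absolute values yields (\ref{ratioji2}). The main computational step is then to rewrite $\sin\alpha_{i,k0l}$ in the intrinsic data $\{\alpha_{p0q}\}$. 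For this I would invoke the Gram determinant of $(\mathbf{u}_{k},\mathbf{u}_{l},\mathbf{u}_{i})$, which on the one hand equals the squared volume of the corresponding parallelepiped and on the other hand factors as $\sin^{2}\alpha_{k0l}\cdot\sin^{2}\alpha_{i,k0l}$. A direct expansion of the $3\times 3$ determinant produces
\[
\sin^{2}\alpha_{k0l}\,\sin^{2}\alpha_{i,k0l}=\sin^{2}\alpha_{k0l}-\cos^{2}\alpha_{k0i}-\cos^{2}\alpha_{l0i}+2\cos\alpha_{k0l}\cos\alpha_{k0i}\cos\alpha_{l0i},
\]
and the analogous identity with $i$ replaced by $j$. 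Dividing the two cancels the common factor $\sin^{2}\alpha_{k0l}$ and yields (\ref{ratioji}) (the index $m$ in the statement playing the role of $l$).

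Finally, dividing the constraint $B_{1}+B_{2}+B_{3}+B_{4}=C$ through by $B_{i}$ and substituting the three sine ratios $B_{p}/B_{i}=\sin\alpha_{i,\cdot 0\cdot}/\sin\alpha_{p,\cdot 0\cdot}$ for $p=j,k,l$ (each time choosing the complementary plane through the two remaining directions) gives (\ref{inverse111}). Uniqueness is immediate from the closed form, and strict positivity follows from interiority, which guarantees that every projected sine is nonzero. The step I expect to be most delicate is not the algebra but the sign and orientation bookkeeping: one must verify carefully, using interiority, that in each of the three projections the relevant pair of directions lies on genuinely opposite sides of the chosen plane $P_{kl}$, so that the signed identity really does collapse to the absolute-value formula, and that the three planes $P_{kl}$, $P_{jl}$, $P_{kj}$ supplying the three ratios are all well-defined and non-degenerate.
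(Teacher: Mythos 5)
The paper does not actually prove this lemma: it is imported verbatim from \cite{Zach/Zou:09} (Proposition~1 there), so there is no in-text proof to compare against. Judged on its own, your derivation is correct and complete. The balance condition $\sum_{i}B_{i}\mathbf{u}_{i}=0$ is indeed the exact characterization of an interior (floating) weighted Fermat point; interiority of $A_{0}$ guarantees that no three of the $\mathbf{u}_{i}$ are coplanar (otherwise $A_{0}$ would lie on a face plane), so each plane $P_{kl}$ is nondegenerate, and writing $A_{0}$ as a strictly positive barycentric combination of the vertices shows directly that $\langle\mathbf{u}_{i},\mathbf{n}\rangle$ and $\langle\mathbf{u}_{j},\mathbf{n}\rangle$ have opposite signs, which settles the sign bookkeeping you flag as delicate. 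Your Gram-determinant identity $\det G(\mathbf{u}_{k},\mathbf{u}_{l},\mathbf{u}_{i})=\sin^{2}\alpha_{k0l}\,\sin^{2}\alpha_{i,k0l}$ checks out and, after taking the ratio over $i$ and $j$, reproduces (\ref{ratioji}) exactly (with $m=l$); the normalization $\sum B_{i}=C$ then gives (\ref{inverse111}) and uniqueness. The route in the source paper (visible also in the proof of Theorem~\ref{tetrahedroninf} here) instead differentiates the objective with respect to dihedral angles to obtain the proportionalities $B_{i}/(a_{i}\operatorname{Vol}(A_{0}A_{j}A_{k}A_{l}))=C$ and then expands the sub-tetrahedron volumes by Cayley--Menger determinants; since $\operatorname{Vol}(A_{0}A_{j}A_{k}A_{l})$ factors as $\tfrac{1}{6}a_{j}a_{k}a_{l}\sin\alpha_{k0l}\sin\alpha_{j,k0l}$, your Gram-determinant computation is the unit-vector form of the same volume identity. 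Your version is the more elementary of the two, replacing the variational/volume machinery by a direct projection of the gradient balance, at the cost of not producing the volume-ratio formulation that the rest of Section~6 actually uses.
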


For $B_{4}=0,$ we obtain the inverse weighted Fermat-Torricelli problem for $\triangle A_{1}A_{2}A_{3}.$

\begin{lemma}\cite{Gue/Tes:02}\label{leminv6}
The weight $B_{i}$ are uniquely determined by the formula:
\begin{equation}\label{inverse111}
B_{i}=\frac{C}{1+\frac{\sin{\alpha_{i0k}}}{\sin{\alpha_{j0k}}}+\frac{\sin{\alpha_{i0j}}}{\sin{\alpha_{j0k}}}},
\end{equation}
where

\begin{equation}\label{ratioji2}
\frac{B_{j}}{B_{i}}=\frac{\sin{\alpha_{i0k}}}{\sin{\alpha_{j0k}}}.
\end{equation}

for $i,j,k=1,2,3.$
\end{lemma}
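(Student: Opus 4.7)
The plan is to derive the formula from the first-order stationarity condition of the weighted Fermat functional at the interior point $A_0$ of $\triangle A_1A_2A_3$, and then normalize by the constraint $B_1+B_2+B_3=C$. The statement is the planar analogue of Lemma~\ref{leminv5} that was just proved for the tetrahedron, so the proof will parallel that argument but is much simpler because only three concurrent unit vectors are involved.

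First I would write the necessary condition for $A_0$ to minimize $f(X)=\sum_{i=1}^{3}B_i\|XA_i\|$ in the plane: differentiating (or using convexity and the fact that $A_0$ is a floating Fermat point, as in \cite[Chapter~II, Theorem~18.37]{BolMa/So:99} invoked in the proof of Theorem~\ref{theorrn1}) yields the vector equilibrium
\[
B_{1}\vec{u}_{1}+B_{2}\vec{u}_{2}+B_{3}\vec{u}_{3}=\vec{0},
\]
where $\vec{u}_{i}$ is the outward unit tangent vector from $A_0$ toward $A_i$. Interpreting this equation as a closed "force triangle" with sides of lengths $B_1,B_2,B_3$, the interior angle of the force triangle opposite to the side $B_i\vec u_i$ is the supplement $\pi-\alpha_{j0k}$ of the angle $\alpha_{j0k}=\angle A_jA_0A_k$ between the other two unit vectors at $A_0$.

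Next I would apply the plane law of sines to the force triangle. Since $\sin(\pi-\alpha_{j0k})=\sin\alpha_{j0k}$, this yields the common ratio
\[
\frac{B_{1}}{\sin\alpha_{203}}=\frac{B_{2}}{\sin\alpha_{103}}=\frac{B_{3}}{\sin\alpha_{102}},
\]
which in the index-free form of the statement is exactly
\[
\frac{B_{j}}{B_{i}}=\frac{\sin\alpha_{i0k}}{\sin\alpha_{j0k}}\quad\text{for }\{i,j,k\}=\{1,2,3\}.
\]
Substituting $B_j=B_i\,\sin\alpha_{i0k}/\sin\alpha_{j0k}$ and $B_k=B_i\,\sin\alpha_{i0j}/\sin\alpha_{j0k}$ into the normalization $B_1+B_2+B_3=C$ and solving for $B_i$ gives the formula (\ref{inverse111}).

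Finally I would note uniqueness: because $A_0$ lies in the interior of $\triangle A_1A_2A_3$, the three angles $\alpha_{102},\alpha_{103},\alpha_{203}$ are strictly positive and strictly less than $\pi$, so every sine appearing in (\ref{inverse111}) is positive and the denominators never vanish; the ratios $B_j/B_i$ are therefore uniquely determined by the geometry, and the linear constraint $B_1+B_2+B_3=C$ then fixes each $B_i$ uniquely and positively. There is no serious obstacle here — the only point requiring care is keeping the cyclic indices consistent between the law-of-sines identification (angle opposite to $B_i$ equals $\pi-\alpha_{j0k}$) and the symbols in the stated formula, so that the two ratios appearing in the denominator of (\ref{inverse111}) are correctly read off as $\sin\alpha_{i0k}/\sin\alpha_{j0k}$ and $\sin\alpha_{i0j}/\sin\alpha_{j0k}$.
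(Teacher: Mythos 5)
Your proof is correct. Note, however, that the paper does not actually prove this lemma: it is imported verbatim from Gueron--Tessler \cite{Gue/Tes:02} and presented as the $B_{4}=0$ specialization of the tetrahedral inverse problem of \cite{Zach/Zou:09} (Lemma~\ref{leminv5}), so there is no in-paper argument to compare against step by step. Your derivation --- the floating equilibrium condition $B_{1}\vec{u}_{1}+B_{2}\vec{u}_{2}+B_{3}\vec{u}_{3}=\vec{0}$, the closed force triangle, the law of sines with the supplementary angles $\pi-\alpha_{j0k}$, and normalization by $B_{1}+B_{2}+B_{3}=C$ --- is the standard classical argument and is exactly in the spirit of how the paper obtains its analogous conditions elsewhere (the Kupitz--Martini directional-derivative equilibrium used in Theorem~\ref{theorrn1}). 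Your index bookkeeping is consistent with the stated formula once one uses $\alpha_{k0j}=\alpha_{j0k}$, and your positivity remark correctly settles uniqueness. The only point worth making explicit, since this is an \emph{inverse} problem, is the converse direction: having solved for the $B_{i}$ from the equilibrium at the prescribed interior point $A_{0}$, you should note that this equilibrium is also sufficient --- $A_{0}$ is then a critical point of the convex functional $f(X)=\sum B_{i}\|XA_{i}\|$ and hence its global minimizer --- so the constructed weights really do solve the inverse problem and not merely a necessary condition. This is a one-line addition, not a gap in substance.
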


\begin{theorem}\label{theor2inf}
If $M\to +\infty,$ for $B_{4}=1$ the solution of the inverse problem for
the tetrahedron $A_{1}A_{2}A_{3}A_{4}$ having the vertex $A_{4}$ at infinity in $\mathbb{R}^{3}$ coincides with the solution of the inverse problem for $\triangle A_{1}A_{2}A_{3}.$
\end{theorem}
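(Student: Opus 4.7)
The plan is to reduce the four-point inverse formula of Lemma~\ref{leminv5} to the three-point inverse formula of Lemma~\ref{leminv6} by exploiting the orthogonality created by placing $A_{4}$ on the perpendicular to the plane $\Pi_{123}$ of $\triangle A_{1}A_{2}A_{3}$ at $A_{0,123}$. First I would argue that, in the limit $M\to+\infty$, the Fermat point $A_{0}$ of the tetrahedron coincides with the planar weighted Fermat point $A_{0,123}$. The weighted equilibrium condition $\sum_{i=1}^{4}B_{i}\,(A_{0}-A_{i})/\|A_{0}-A_{i}\|=0$ has an $O(M)$ in-plane contribution from the three vertices of $\triangle A_{1}A_{2}A_{3}$ (because $B_{i}(M)=b_{i}M+c_{i}$ grows linearly in $M$ for $i=1,2,3$) and only an $O(1)$ contribution from $A_{4}$ (since $B_{4}=1$), so after projecting onto $\Pi_{123}$ and letting $M\to\infty$, the equilibrium reduces exactly to the planar weighted Torricelli condition whose solution is given by (\ref{mainres})--(\ref{a10123}), fixing $A_{0}\equiv A_{0,123}$.

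Once $A_{0}=A_{0,123}$, the segment $A_{0}A_{4}$ is, by construction, perpendicular to each segment $A_{0}A_{i}$ with $i\in\{1,2,3\}$, so
\[\alpha_{40i}=\alpha_{i04}=\tfrac{\pi}{2},\qquad \cos\alpha_{40i}=0,\quad i=1,2,3.\]
Next I would substitute these vanishing cosines into the ratio formula (\ref{ratioji}) of Lemma~\ref{leminv5}, with the roles $l=4$ and $i,j,k\in\{1,2,3\}$: every occurrence of $\cos\alpha_{k04}$ or $\cos\alpha_{40\cdot}$ drops out and $\sin^{2}\alpha_{k04}=1$, so the right-hand side collapses to $|\sin\alpha_{i0k}|/|\sin\alpha_{j0k}|$, which is precisely the planar ratio (\ref{ratioji2}) of Lemma~\ref{leminv6} applied to $\triangle A_{1}A_{2}A_{3}$.

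Finally I would handle the normalization. In (\ref{inverse111}) the denominator for $B_{i}$ ($i\in\{1,2,3\}$) carries three ratio terms: two of them are the in-plane ratios just shown to coincide with the three-point formula, and the third is $B_{4}/B_{i}$. Because $B_{4}=1$ is bounded while $B_{i}(M)\to\infty$, this third term vanishes in the limit, so the four-point denominator converges to the three-point denominator, and the same passage applied to the normalization constant $C$ (dominated by $B_{1}+B_{2}+B_{3}$) shows that the weights $(B_{1},B_{2},B_{3})$ obtained from Lemma~\ref{leminv5} converge, as $M\to+\infty$, to those produced by Lemma~\ref{leminv6} for the triangle $\triangle A_{1}A_{2}A_{3}$ at $A_{0,123}$.

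The main obstacle I anticipate is the first step: rigorously justifying that the tetrahedral Fermat point converges to $A_{0,123}$ and controlling the rate of convergence well enough that the angles $\alpha_{40i}$ converge to $\pi/2$, and not merely that $A_{4}-A_{0}$ becomes \emph{almost} normal to $\Pi_{123}$. This demands an asymptotic expansion of the equilibrium condition separating the in-plane and normal components, together with an equicontinuity argument showing that the planar equilibrium (\ref{mainres}) is stable under the $O(1/M)$ normal perturbation induced by the bounded weight $B_{4}=1$. Once this convergence is secured, the algebraic collapse of (\ref{ratioji}) and the fading of the $B_{4}/B_{i}$ term in (\ref{inverse111}) are routine substitutions.
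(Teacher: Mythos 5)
Your proof follows essentially the same route as the paper: establish $A_{0}\to A_{0,123}$ and $\angle A_{4}A_{0}A_{i}\to 90^{\circ}$ as $M\to+\infty$, then substitute the vanishing cosines into (\ref{ratioji}) so that it collapses to the planar ratio (\ref{ratioji2}) of Lemma~\ref{leminv6}. The convergence step you flag as the main obstacle is precisely the point the paper asserts without justification, so your asymptotic-equilibrium sketch and the observation that the term $B_{4}/B_{i}\to 0$ in the normalization in fact supply more detail than the published argument.
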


\begin{proof}
For $M\to +\infty,$ $A_{0}\to A_{0,123}$ and $\angle A_{4}A_{0}A_{i}=90^{\circ},$ for $i=1,2,3.$
By replacing $\angle A_{4}A_{0}A_{1}=\angle A_{4}A_{0}A_{2}=\angle A_{4}A_{0}A_{3}=90^{\circ}$
in (\ref{ratioji}) and (\ref{ratioji2}) for $i,j=1,2,3,$ $k,l,m=1,2,3,4$ and $i \neq j\neq k\neq l,$
we derive $B_{i}$
\[\frac{B_{j}}{B_{i}}=\frac{\sin{\alpha_{i0k}}}{\sin{\alpha_{j0k}}},\]

for $i,j,k=1,2,3$ and $B_{4}=1.$

\end{proof}

A direct consequence of Theorem~4, for

\[B_{1}(M)+B_{2}(M)+B_{3}(M)+B_{4}(M)=C\]

($C$ is a constant real number independent of $M$)

gives the following Proposition (the weights $B_{i}$ are independent of $M$)
\begin{proposition}\label{sumbconstant}
For $M\to +\infty$ and

\[B_{i}(M)=b_{i}M+c_{i}>0,\]

\[B_{4}=1,\]

for $i=1,2,3$
such that

\[b_{1}+b_{2}+b_{3}=0,\]
$B_{i}$ are uniquely determined by the formula:
\begin{equation}\label{inverse111M}
B_{i}=\frac{C}{1+\frac{\sin{\alpha_{i0k}}}{\sin{\alpha_{j0k}}}+\frac{\sin{\alpha_{i0j}}}{\sin{\alpha_{j0k}}}},
\end{equation}
for $C=1+c_{1}+c_{2}+c_{3}$
and $i,j,k=1,2,3.$
\end{proposition}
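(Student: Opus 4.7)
The plan is to derive the proposition as a near-immediate corollary of Theorem~\ref{theor2inf}, combined with two bookkeeping observations: the affine sum condition imposed on the weights, and the positivity requirement $B_{i}(M)>0$ for all sufficiently large $M$.

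First, I would analyze the constraint $B_{1}(M)+B_{2}(M)+B_{3}(M)+B_{4}=C$ with $B_{4}=1$, $B_{i}(M)=b_{i}M+c_{i}$, and $C$ independent of $M$. Substitution gives $(b_{1}+b_{2}+b_{3})M+(c_{1}+c_{2}+c_{3}+1)=C$ for every $M$; matching powers of $M$ recovers exactly the stated hypothesis $b_{1}+b_{2}+b_{3}=0$ and pins the constant to $C=1+c_{1}+c_{2}+c_{3}$. Furthermore, the positivity $B_{i}(M)>0$ for $M\to+\infty$ forces $b_{i}\ge 0$ for $i=1,2,3$; combined with $\sum_{i}b_{i}=0$, this compels $b_{1}=b_{2}=b_{3}=0$, so each $B_{i}$ is genuinely the constant $c_{i}$, independent of $M$. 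This is what makes the statement internally consistent.

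Second, I would invoke Theorem~\ref{theor2inf} directly: as $M\to+\infty$, the weighted Fermat--Torricelli point $A_{0}$ of the tetrahedron collapses onto $A_{0,123}$, the dihedral angles $\angle A_{4}A_{0}A_{i}$ approach $\tfrac{\pi}{2}$ for $i=1,2,3$, and the tetrahedral inverse formula of Lemma~\ref{leminv5} degenerates to the planar inverse formula of Lemma~\ref{leminv6} at $\triangle A_{1}A_{2}A_{3}$. Substituting $C=1+c_{1}+c_{2}+c_{3}$ into that planar formula yields
\[
B_{i}=\frac{C}{1+\dfrac{\sin\alpha_{i0k}}{\sin\alpha_{j0k}}+\dfrac{\sin\alpha_{i0j}}{\sin\alpha_{j0k}}}
\]
for $i,j,k=1,2,3$, which is precisely the assertion of the proposition.

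The one subtlety worth flagging, rather than a genuine obstacle, is checking that the right-hand side of the planar formula is truly $M$-independent: this is automatic once one observes that after passing to the limit the angles $\alpha_{i0j}$ are evaluated at the fixed point $A_{0,123}$ in the fixed triangle $\triangle A_{1}A_{2}A_{3}$, so no residual $M$-dependence can enter. The overall proof is thus a short synthesis of the linear-coefficient argument with the reduction theorem, and no new geometric input beyond Theorem~\ref{theor2inf} and Lemma~\ref{leminv6} is required.
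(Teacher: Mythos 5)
Your proposal is correct and follows essentially the same route as the paper: the paper likewise obtains the proposition as a direct consequence of Theorem~\ref{theor2inf}, by matching coefficients of $M$ in the constraint $B_{1}(M)+B_{2}(M)+B_{3}(M)+B_{4}=C$ to get $b_{1}+b_{2}+b_{3}=0$ and $C=1+c_{1}+c_{2}+c_{3}$, and then reading off the planar inverse formula of Lemma~\ref{leminv6}. Your additional remark that positivity for large $M$ together with $\sum_{i}b_{i}=0$ forces $b_{1}=b_{2}=b_{3}=0$ simply makes explicit the paper's parenthetical claim that the weights are independent of $M$.
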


As a future work, we may focus on studying properties of the Betti group of a Frechet multisimplex.
\\
\\
\\


\end{document}